\documentclass[11pt]{article}
%%% Revised version, 17 juin 2019

%%% Packages
\usepackage{amssymb,amsmath,amsthm}
\usepackage{mathrsfs}
\usepackage{mathtools}
\usepackage{color}
\usepackage{marginnote}
\usepackage{graphicx}

%%% General layout
%\allowdisplaybreaks[1]
\topmargin=0cm
\oddsidemargin0mm
\textheight23.5cm
\textwidth16cm
\headsep0mm
\headheight0mm
\parskip 2pt

%%% Environments
\numberwithin{equation}{section}
\newtheorem{thm}{Theorem}[section]

\newtheorem{prop}[thm]{Proposition}
\newtheorem{lem}[thm]{Lemma}
\newtheorem{cor}[thm]{Corollary}
\theoremstyle{definition}
\newtheorem{rem}[thm]{Remark}

\let\oldproofname=\proofname
\renewcommand{\proofname}{\rm\bf{\oldproofname}}

%%% Blackboard fonts
\newcommand{\N}{\mathbb{N}}
\newcommand{\Z}{\mathbb{Z}}

\newcommand{\R}{\mathbb{R}}
\newcommand{\C}{\mathbb{C}}

%%% Calligraphic fonts
\newcommand{\cA}{\mathcal{A}}

\newcommand{\cC}{\mathcal{C}}

\newcommand{\cE}{\mathcal{E}}
\newcommand{\cF}{\mathcal{F}}
\newcommand{\cG}{\mathcal{G}}

\newcommand{\cI}{\mathcal{I}}

\newcommand{\cL}{\mathcal{L}}

\newcommand{\cO}{\mathcal{O}}

%%% Super-calligraphic fonts
%\newcommand{\WW}{\mathscr{W}}
%\newcommand{\WWA}{\mathscr{W}^\mathscr{\omega}}
%\newcommand{\LL}{\mathscr{L}}
%\newcommand{\QQ}{\mathscr{Q}}

%%% A few shortcuts
\renewcommand{\Re}{\mathop{\mathrm{Re}}}
\renewcommand{\Im}{\mathop{\mathrm{Im}}}
\newcommand{\dd}{\,{\rm d}}
\newcommand{\D}{{\rm d}}
\renewcommand{\div}{\mathop{\mathrm{div}}\nolimits}
\newcommand{\curl}{\mathop{\mathrm{curl}}}

\newcommand{\QED}{\mbox{}\hfill$\Box$}
\renewcommand{\:}{\thinspace :}

  % b barre
\newcommand{\rb}{\overline{r}}  % r barre
\newcommand{\gamb}{\overline{\gamma}}  % gamma barre
\newcommand{\Rp}{\R_+}
\newcommand{\Rpb}{\overline{\R}_+}

\begin{document}

\title{On the linear stability of vortex columns in the energy space}

\author{
{\bf Thierry Gallay}\\
Institut Fourier\\
Universit\'e Grenoble Alpes, CNRS\\
100 rue des Maths\\
38610 Gi\`eres, France\\
{\small\tt Thierry.Gallay@univ-grenoble-alpes.fr}
\and
{\bf Didier Smets}\\
Laboratoire Jacques-Louis Lions\\
Sorbonne Universit\'e\\
4, Place Jussieu\\
75005 Paris, France\\
{\small\tt Didier.Smets@sorbonne-universite.fr}}

\date{June 17, 2019}
\maketitle

\begin{abstract}
We investigate the linear stability of inviscid columnar vortices 
with respect to finite energy perturbations. For a large class 
of vortex profiles, we show that the linearized evolution group has 
a sub-exponential growth in time, which means that the associated 
growth bound is equal to zero. This implies in particular that the 
spectrum of the linearized operator is entirely contained in the 
imaginary axis. This contribution complements the results of 
our previous work \cite{GS1}, where spectral stability was established 
for the linearized operator in the enstrophy space. 
\end{abstract}

\section{Introduction}\label{sec1}
It is well known that radially symmetric vortices in two-dimensional
incompressible and inviscid fluids are stable if the vorticity 
distribution is a monotone function of the distance to the 
vortex center \cite{Arn,MP}. In a three-dimensional framework, 
this result exactly means that {\em columnar vortices} with no axial 
flow are stable with respect to two-dimensional perturbations, 
provided Arnold's monotonicity condition is satisfied. Vortex 
columns play an important role in nature, especially in atmospheric 
flows, and are also often observed in laboratory experiments \cite{AKO}. 
It is therefore of great interest to determine their stability 
with respect to arbitrary perturbations, with no particular 
symmetry, but this question appears to be very difficult and 
the only rigorous results available so far are sufficient conditions 
for {\em spectral stability}. 

In a celebrated paper \cite{Ke}, Lord Kelvin considered the particular
case of Rankine's vortex and proved that the linearized operator has a
countable family of eigenvalues on the imaginary axis. The
corresponding eigenfunctions, which are now referred to as {\em
Kelvin's vibration modes}, have been extensively studied in the
literature, also for more general vortex profiles \cite{FSJ,LL,RS}. An
important contribution was made by Lord Rayleigh in \cite{Ra}, who
gave a simple condition for spectral stability with respect to
axisymmetric perturbations. Rayleigh's criterion, which requires that
the angular velocity $\Omega$ and the vorticity $W$ have the same sign
everywhere, is actually implied by Arnold's monotonicity condition for
localized vortices.  In the non-axisymmetric case, the only stability
result one can obtain using the techniques introduced by Rayleigh is
restricted to perturbations in a particular subspace, where the
angular Fourier mode $m$ and the vertical wave number $k$ are
fixed. In that subspace, we have a sufficient condition for spectral 
stability, involving a quantity that can be interpreted as a 
local Richardson number. However, as is emphasized by Howard and 
Gupta \cite{HG}, that criterion always fails when the ratio $k^2/m^2$ 
is sufficiently small, and therefore does note provide any 
unconditional stability result.

In a recent work \cite{GS1}, we perform a rigorous mathematical study
of the linearized operator at a columnar vortex, using the vorticity
formulation of the Euler equations. We assume that the unperturbed
vorticity profile satisfies Arnold's monotonicity condition, hence
Rayleigh's criterion as well, and we impose an additional  
condition which happens to be satisfied in all classical examples and 
may only be technical. We work in the enstrophy space, assuming 
periodicity (with arbitrary period) in the vertical direction. In this 
framework, we prove that the spectrum of the linearized operator 
is entirely contained in the imaginary axis of the complex plane, 
which gives the first spectral stability result for columnar vortices 
with smooth velocity profile. More precisely, in any Fourier 
subspace characterized by its angular mode $m \neq 0$ and its 
vertical wave number $k \neq 0$, we show that the spectrum
of the linearized operator consists of an essential part that
fills an interval of the imaginary axis, and of a countable family 
of imaginary eigenvalues which accumulate only on
the essential spectrum (the latter correspond to Kelvin's vibration
modes). The most difficult part of our analysis is to preclude the
existence of isolated eigenvalues with nonzero real part, which can 
eventually be done by combining Howard and Gupta's criterion, a 
homotopy argument, and a detailed analysis of the eigenvalue equation 
when critical layers occur.

The goal of the present paper is to extend the results of \cite{GS1} in
several directions. First, we use the velocity formulation of the
Euler equations, and assume that the perturbations have finite
energy. This functional framework seems more natural than the
enstrophy space used in \cite{GS1}, but part of the analysis becomes
more complicated. In particular, due to the pressure term in the
velocity formulation, it is not obvious that the linearized operator
in a given Fourier sector is the sum of a (nearly) skew-symmetric
principal part and a compact perturbation. This decomposition, however, 
is the starting point of our approach, as it shows that the spectrum 
outside the imaginary axis is necessarily discrete. Also, unlike 
in \cite{GS1}, we do not have to assume periodicity in the vertical 
direction, so that our result applies to localized perturbations 
as well. Finally, we make a step towards linear stability by 
showing that the evolution group generated by the linearized 
operator has a mild, sub-exponential growth as $|t| \to \infty$. 
This is arguably the strongest way to express spectral stability. 

We now present our result in more detail. We consider the 
incompressible Euler equations in the whole space $\R^3$\: 
\begin{equation}\label{eq:Euler3d}
  \partial_t u + (u\cdot\nabla)u \,=\, -\nabla p\,, \qquad
  \div u \,=\, 0\,,
\end{equation}
where $u = u(x,t) \in \R^3$ denotes the velocity of the fluid at point
$x = (x_1,x_2,x_3) \in \R^3$ and time $t \in \R$, and
$p = p(x,t) \in \R$ is the associated pressure. The solutions we are
interested in are perturbations of flows with axial symmetry, and are
therefore conveniently described using cylindrical coordinates
$(r,\theta,z)$ defined by $x_1 = r\cos\theta$, $x_2 = r\sin\theta$,
and $x_3 = z$. The velocity field is decomposed as
\[ 
  u \,=\, u_r(r,\theta,z,t) e_r + u_\theta(r,\theta,z,t) e_\theta + 
  u_z(r,\theta,z,t) e_z\,,
\]
where $e_r$, $e_\theta$, $e_z$ are unit vectors in the radial, 
azimuthal, and vertical directions, respectively. The evolution equation 
in \eqref{eq:Euler3d} is then written in the equivalent form
\begin{equation}\label{eq:Eulercyl}
  \begin{split}
  \partial_t u_r + (u\cdot\nabla)u_r - \frac{u_\theta^2}{r} \,&=\, -\partial_r p\,, \\
  \partial_t u_\theta + (u\cdot\nabla)u_\theta + \frac{u_r u_\theta}{r} \,&=\, 
  -\frac1r \partial_\theta p\,, \\
  \partial_t u_z + (u\cdot\nabla)u_z \,&=\, -\partial_z p\,,
  \end{split}
\end{equation}
where $u\cdot \nabla = u_r \partial_r  + \frac1r u_\theta \partial_\theta  
+ u_z \partial_z$, and the incompressibility condition becomes 
\begin{equation}\label{eq:incomp}
  \div u \,=\, \frac1r\partial_r (ru_r) + \frac1r \partial_\theta u_\theta 
  + \partial_z u_z \,=\, 0\,. 
\end{equation}

Columnar vortices are described by stationary solutions of 
\eqref{eq:Eulercyl}, \eqref{eq:incomp} of the following form
\begin{equation}\label{eq:column}
  u \,=\, V(r) \,e_\theta\,,   \qquad p \,=\, P(r)\,,
\end{equation}
where the velocity profile $V :  \R_+ \to \R$ is arbitrary, and the 
pressure $P : \R_+ \to \R$ is determined by the centrifugal 
balance $rP'(r) = V(r)^2$. Other physically relevant 
quantities that characterize the vortex are the angular velocity
$\Omega$ and the vorticity $W$\:
\begin{equation}\label{eq:OmW}
  \Omega(r) \,=\, \frac{V(r)}{r}\,, \qquad 
  W(r) \,=\, \frac{1}{r}\,\frac{\D}{\D r}\bigl(r V(r)\bigr) \,=\, 
  r \Omega'(r) + 2 \Omega(r)\,.
\end{equation}

To investigate the stability of the vortex \eqref{eq:column}, 
we consider perturbed solutions of the form
 \[
  u(r,\theta,z,t) \,=\, V(r) \,e_\theta + \tilde u(r,\theta,z,t)\,, 
  \qquad p(r,\theta,z,t) \,=\, P(r) + \tilde p(r,\theta,z,t)\,.
\]
Inserting this Ansatz into \eqref{eq:Eulercyl} and neglecting the 
quadratic terms in $\tilde u$, we obtain the linearized evolution equations 
\begin{equation}\label{eq:upert}
  \begin{split}
  \partial_t u_r + \Omega \partial_\theta u_r - 2 \Omega u_\theta \,&=\, 
   -\partial_r p\,, \\
  \partial_t u_\theta + \Omega \partial_\theta u_\theta + W u_r \,&=\, 
  -\frac1r \partial_\theta p\,, \\
  \partial_t u_z + \Omega \partial_\theta  u_z \,&=\, -\partial_z p\,,
  \end{split}
\end{equation}
where we have dropped all tildes for notational simplicity. Remark that
the incompressibility condition \eqref{eq:incomp} still holds for the 
velocity perturbations. Thus, taking the divergence of both sides in 
\eqref{eq:upert}, we see that the pressure $p$ satisfies the second 
order elliptic equation
\begin{equation}\label{eq:pressure}
  -\partial_r^* \partial_r p -\frac{1}{r^2}\partial_\theta^2 p -\partial_z^2 p
  \,=\, 2 \bigl(\partial_r^* \Omega\bigr) \partial_\theta u_r - 2 \partial_r^* 
  \bigl(\Omega\,u_\theta\bigr)\,,
\end{equation}
where we introduced the shorthand notation $\partial_r^* f = \frac1r 
\partial_r(rf) = \partial_r f + \frac1r f$. 

We want to solve the evolution equation \eqref{eq:upert} in the Hilbert space
\[
  X \,=\, \Bigl\{u = (u_r,u_\theta,u_z) \in L^2(\R^3)^3\,\Big|\, 
  \partial_r^* u_r + \frac1r \partial_\theta u_\theta + \partial_z u_z 
  = 0\Bigr\}\,,
\]
equipped with the standard $L^2$ norm. Note that the definition of 
$X$ incorporates the incompressibility condition \eqref{eq:incomp}. 
In Section~\ref{sec3} we shall verify that, for any $u \in X$, the elliptic
equation \eqref{eq:pressure} has a unique solution (up to an 
irrelevant additive constant) that satisfies $\nabla p \in L^2(\R^3)^3$. 
Denoting that solution by $p = P[u]$, we can write Eq.~\eqref{eq:upert}
in the abstract form $\partial_t u = L u$, where $L$ is the integro-differential
operator in $X$ defined by
\begin{equation}\label{eq:Ldef}
  L u \,=\, \begin{pmatrix*}[l]
  -\Omega \partial_\theta u_r + 2 \Omega u_\theta -\partial_r P[u] \\[1mm]
  -\Omega \partial_\theta u_\theta - W u_r  -\frac1r\partial_\theta P[u] \\[1mm]
  -\Omega \partial_\theta u_z -\partial_z P[u]\end{pmatrix*}\,.
\end{equation}
If the angular velocity $\Omega$ and the vorticity $W$ are, for instance, bounded
and continuous functions on $\R_+$, it is not difficult to verify that the operator
$L$ generates a strongly continuous group of bounded linear operators in $X$, 
see Section~\ref{sec2}. Our goal is to show that, under additional assumptions
on the vortex profile, the norm of this evolution group has a mild growth as 
$|t| \to \infty$. Following \cite{GS1}, we make the following assumptions. 

\smallskip\noindent{\bf Assumption H1:} {\em The vorticity profile 
$W : \Rpb \to \Rp$ is a $\cC^2$ function satisfying $W'(0) = 0$, 
$W'(r) < 0$ for all $r > 0$, $r^3 W'(r) \to 0$ as $r \to \infty$, 
and}
\begin{equation}\label{eq:Winteg}
  \Gamma \,:=\, \int_0^\infty W(r) r\dd r \,<\, \infty\,.
\end{equation}

According to \eqref{eq:OmW}, the angular velocity $\Omega$ can be 
expressed in terms of the vorticity $W$ by the formula
\begin{equation}\label{eq:Omrep}  
  \Omega(r) \,=\, \frac{1}{r^2}\int_0^r W(s) s\dd s\,, \qquad r > 0\,,
\end{equation}
and the derivative $\Omega'$ satisfies
\begin{equation}\label{eq:Omrep2}  
  \Omega'(r) \,=\, \frac{W(r)-2\Omega(r)}{r} \,=\, \frac{1}{r^3}
  \int_0^r W'(s)s^2\dd s\,, \qquad r > 0\,.
\end{equation}
Thus $\Omega \in \cC^2(\Rpb) \cap C^3(\Rp)$ is a positive function 
satisfying $\Omega(0) = W(0)/2$, $\Omega'(0) = 0$, $\Omega'(r) < 0$ 
for all $r > 0$, and $r^2 \Omega(r) \to \Gamma$ as $r \to \infty$. 
Moreover, since $W$ is nonincreasing, it follows from \eqref{eq:Winteg} 
that $r^2 W(r) \to 0$ as $r \to \infty$, and this implies that $r^3 
\Omega'(r) \to -2\Gamma$ as $r \to \infty$. Similarly $r^4 \Omega''(r) \to
6\Gamma$ as $r \to \infty$. Finally, assumption H1 implies that the
{\em Rayleigh function} is positive\:
\begin{equation}\label{eq:Phidef}
  \Phi(r) \,=\, 2 \Omega(r) W(r) \,>\, 0\,, \qquad r \ge 0\,.
\end{equation}

\smallskip\noindent{\bf Assumption H2:} {\em The $\cC^1$ function 
$J : \Rp \to \Rp$ defined by
\begin{equation}\label{eq:Jdef}
  J(r) \,=\, \frac{\Phi(r)}{\Omega'(r)^2}\,, \qquad  r > 0\,,
\end{equation}
satisfies $J'(r) < 0$ for all $r > 0$ and $rJ'(r)\to 0$ as $r\to \infty$.} 

\smallskip
The reader is referred to the previous work \cite{GS1} for a discussion
of these hypotheses. We just recall here that assumptions H1, H2 
are both satisfied in all classical examples that can be found in the 
physical literature.  In particular, they hold for the Lamb-Oseen vortex\:
\begin{equation}\label{eq:LOvortex}
  \Omega(r) \,=\, \frac{1}{r^2}\Bigl(1 - e^{-r^2}\Bigr)\,,
  \qquad W(r) \,=\, 2\,e^{-r^2}\,, 
\end{equation}
and for the Kaufmann-Scully vortex\:
\begin{equation}\label{eq:KSvortex}
  \Omega(r) \,=\, \frac{1}{1+r^2}\,, \qquad
  W(r) \,=\, \frac{2}{(1+r^2)^2}\,.
\end{equation}

\smallskip
Our main result can now be stated as follows\:

\begin{thm}\label{thm:main}
Assume that the vorticity profile $W$ satisfies assumptions H1, H2
above.  Then the linear operator $L$ defined in \eqref{eq:Ldef} is
the generator of a strongly continuous group $(e^{tL})_{t \in \R}$
of bounded linear operators in $X$. Moreover, for any
$\epsilon > 0$, there exists a constant $C_\epsilon \ge 1$ such that
\begin{equation}\label{eq:eLbound}
  \|e^{tL}\|_{X \to X} \,\le\, C_\epsilon\,e^{\epsilon |t|}\,, \qquad
  \hbox{for all } t \in \R\,.
\end{equation}
\end{thm}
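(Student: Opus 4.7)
The plan is to diagonalize $L$ by Fourier decomposition in the angular and vertical variables and then obtain the growth estimate via the Gearhart--Pr\"uss theorem on Hilbert spaces.

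First I would exploit the invariance of $L$ under rotations in $\theta$ and translations in $z$ to decompose
\[
  X \,=\, \bigoplus_{m\in\Z}\int_\R^\oplus X_{m,k}\dd k, \qquad L \,=\, \bigoplus_m \int_\R^\oplus L_{m,k}\dd k,
\]
so that each $L_{m,k}$ acts on a space of radial profiles. In this decomposition the pressure operator $u\mapsto P[u]$ becomes, in each sector, an integral operator built from the Green function of $-\partial_r^*\partial_r + m^2/r^2 + k^2$ on $\R_+$. In the degenerate sectors $m=0$ or $k=0$ the operator $L_{m,k}$ either reduces to a multiplication operator or generates a group of isometries, so only the generic sectors $mk\neq 0$ require substantial analysis.

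For $mk\neq 0$ the central structural ingredient is the decomposition
\[
  L_{m,k} \,=\, A_{m,k} \,+\, K_{m,k},
\]
where $A_{m,k}$ is essentially skew-adjoint on $X_{m,k}$ (containing advection by the columnar flow together with the leading part of the pressure response) and $K_{m,k}$ is compact (collecting the terms in $\Omega'$, $W$ and the subleading pressure couplings, whose compactness follows from the decay of $W$, $\Omega'$ and $\Omega$ at infinity granted by H1). The genuine difficulty here is the pressure: one has to rewrite the components of $\nabla P[u]$ in each sector in a form that explicitly exhibits the skew-symmetric part. Once this is done, Weyl's theorem gives $\sigma_{\mathrm{ess}}(L_{m,k})\subset i\R$, and the spectrum of $L_{m,k}$ outside $i\R$ is at most a discrete set of eigenvalues of finite multiplicity with no accumulation off $i\R$. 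To exclude these eigenvalues from $\{\Re\lambda\neq 0\}$, I would transfer the argument of \cite{GS1} to the energy framework: under H1 and H2 a Howard--Gupta type identity, combined with a homotopy in the profile $W$ and a careful local analysis of the eigenvalue ODE at critical layers, forbids any non-imaginary eigenvalue. The $L^2$ setting imposes additional regularity checks of the putative eigenfunctions at $r=0$ and $r=\infty$, handled using the $\cC^2$ regularity of $\Omega$, the decay $r^2\Omega(r)\to \Gamma$, and the elliptic regularity of the pressure equation.

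With $\sigma(L_{m,k})\subset i\R$ in hand, applying the Gearhart--Pr\"uss theorem separately to the semigroups generated by $L$ and by $-L$ reduces the bound \eqref{eq:eLbound} to proving, for every $\eps>0$,
\[
  \sup_{(m,k)}\;\sup_{|\Re\lambda|\ge\eps}\,\bigl\|(\lambda - L_{m,k})^{-1}\bigr\|_{X_{m,k}\to X_{m,k}} \,<\,\infty.
\]
I expect the main obstacle to be the uniformity in $(m,k)$: pointwise in $(m,k)$ the resolvent bound is automatic from the spectral statement and the $A_{m,k}+K_{m,k}$ decomposition, but as $|m|+|k|\to\infty$ one has to rule out any resonant blow-up. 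The key observation is that in the high-frequency regime the compact perturbation $K_{m,k}$ becomes subdominant relative to $A_{m,k}$, so that $L_{m,k}$ is close, in an appropriate operator sense, to the skew-adjoint multiplication by $-im\Omega(r)$, whose resolvent is bounded by $1/|\Re\lambda|$ uniformly in $(m,k)$. Assembling the uniform sectoral estimates by Plancherel then yields $\omega_0(L)=0$, which is precisely \eqref{eq:eLbound}.
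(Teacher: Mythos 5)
Your high-level architecture matches the paper's: Fourier decomposition into sectors $X_{m,k}$, a splitting $L_{m,k} = A + B$ with $A$ nearly skew-adjoint and $B$ compact, Weyl's theorem for the essential spectrum, the result of \cite{GS1} to preclude non-imaginary eigenvalues, and Gearhart--Pr\"uss to convert uniform resolvent bounds on vertical lines into the growth estimate \eqref{eq:eLbound}. You have also correctly singled out the uniformity of the resolvent bound in $(m,k)$ as the central difficulty. However, the resolution you propose for that difficulty is not correct, and it is precisely there that the paper's hard work lies.

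Your key claim --- that in the high-frequency regime the compact perturbation $K_{m,k}$ ``becomes subdominant relative to $A_{m,k}$'' so that $L_{m,k}$ is close, in an operator sense, to the skew multiplication by $-im\Omega$ --- does not hold. In the paper's decomposition \eqref{eq:ABmkdef}, the operator $B_{m,k}$ contains the multiplication terms $2\Omega u_\theta$ and $-2(r\Omega)'u_r$, whose operator norms are independent of $m$ and $k$; likewise the pressure terms in $B_{m,k}$ are bounded uniformly but not vanishing as $|m|+|k|\to\infty$. So no Neumann-series or norm-perturbation argument can give the uniform bound. Worse, when $\Im(s)/m = -b$ with $b\in(0,1)$ (the critical-layer regime), the distance from $s$ to $\sigma(A_m)$ is exactly $a=\Re(s)$ for all $m$, and near the critical radius $r_* = \Omega^{-1}(b)$ the factor $\gamma(r)=a+im(\Omega(r)-b)$ stays of size $a$ regardless of $m$; there is simply no large parameter that makes the advection dominate. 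The paper instead derives a scalar second-order ODE for $u_r$ (Lemma~\ref{lem:resODE}), proves explicit a priori bounds in several regions of parameter space (Lemmas~\ref{lem:axi}, \ref{lem:kpetit}, \ref{lem:HG12}, \ref{lem:ksmg}, summarized in Corollary~\ref{cor:explicit}), and in the remaining regime $m\to\infty$, $k/m\to\delta\in(0,\infty)$, $b\to b_*\in(0,1)$, runs a compactness/contradiction argument: rescaling by $m$ around the critical radius yields a limiting Bessel-type equation \eqref{eq:besseltype} whose solutions cannot lie in $L^2(\R)$. This blow-up analysis near critical layers, not a perturbative smallness of $K_{m,k}$, is what makes the uniform bound \eqref{eq:unifres} work. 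As written, your plan leaves exactly this step unjustified, so the proof has a genuine gap.

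Two smaller points. First, including ``the leading part of the pressure response'' in $A_{m,k}$ is not how the paper proceeds (all of $\nabla P_{m,k}[u]$ lives in $B_{m,k}$); proving compactness of the full pressure map is itself nontrivial and requires the weighted estimates of Section~\ref{sec3} together with the elliptic equation \eqref{eq:Br} for $B_r$. Second, even ``pointwise in $(m,k)$'' uniformity over compact $(m,k)$-sets is not entirely automatic, because $X_{m,k}$ varies with $k$ through the divergence constraint; the paper handles this with the conjugation $\cL_{m,k}=M_k^{-1}L_{m,k}M_k$ in Lemma~\ref{lem:localbd}.
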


\begin{rem}\label{rem:growth}
Estimate \eqref{eq:eLbound} means that {\em growth bound} of the group 
$e^{tL}$ is equal to zero, see \cite[Section~I.5]{EN}.  
Equivalently, the spectrum of $e^{tL}$ is contained in the unit circle
$\{z \in \C\,|\, |z| = 1\}$ for all $t \in \R$. Invoking the 
Hille-Yosida theorem, we deduce from \eqref{eq:eLbound} that the spectrum 
of the generator $L$ is entirely contained in the imaginary axis of the 
complex plane, and that the following resolvent bound holds for any $a > 0$\:
\begin{equation}\label{eq:resbound}
  \sup\Bigl\{ \|(z - L)^{-1}\|_{X \to X} \,\Big|\, z \in \C\,,~|\Re(z)| \ge a\Bigr\}
   \,<\, \infty\,.
\end{equation}
In fact, since $X$ is a Hilbert space, the Gearhart-Pr\"uss theorem 
\cite[Section~V.1]{EN} asserts that the resolvent bound \eqref{eq:resbound}
is also equivalent to the group estimate \eqref{eq:eLbound}. 
\end{rem}

\begin{rem}\label{rem:Ceps}
The constant $C_\epsilon$ in \eqref{eq:eLbound} may of course blow up 
as $\epsilon \to 0$, but unfortunately our proof does not give any 
precise information. It is reasonable to expect that $C_\epsilon = 
\cO(\epsilon^{-N})$ for some $N > 0$, which would imply that 
$ \|e^{tL}\| = \cO(|t|^N)$ as $|t| \to \infty$, but proving such 
an estimate is an open problem. 
\end{rem}

\begin{rem}\label{rem:normal}
In \eqref{eq:LOvortex}, \eqref{eq:KSvortex}, and in all what follows, 
we always assume that the vortex profile is normalized so that 
$W(0) = 2$, hence $\Omega(0) = 1$. The general case can be 
easily deduced by a rescaling argument. 
\end{rem}

The rest of this paper is organized as follows. In Section~\ref{sec2},
we describe the main steps in the proof of Theorem~\ref{thm:main}.  In
particular, we show that the linearized operator \eqref{eq:Ldef} is
the generator of a strongly continuous group in the Hilbert space $X$,
and we reduce the linearized equations to a family of one-dimensional
problems using a Fourier series expansion in the angular variable
$\theta$ and a Fourier transform with respect to the vertical variable
$z$. For a fixed value of the angular Fourier mode $m \in \Z$ and of
the vertical wave number $k \in \R$, we show that the restricted
linearized operator $L_{m,k}$ is the sum of a (nearly) skew-symmetric
part $A_m$ and of a compact perturbation $B_{m,k}$. Actually, proving
compactness of $B_{m,k}$ requires delicate estimates on the pressure,
which are postponed to Section~\ref{sec3}. We then invoke the result
of \cite{GS1} to show that $L_{m,k}$ has no eigenvalue, hence no
spectrum, outside the imaginary axis. The last step in the proof
consists in showing that, for any $a \neq 0$, the resolvent norm
$\|(s - L_{m,k})^{-1}\|$ is uniformly bounded for all $m \in \Z$, all
$k \in \R$, and all $s \in \C$ with $\Re(s) = a$. This crucial bound
is obtained in Section~\ref{sec4} using a priori estimates for the
resolvent equation, which give explicit bounds in some regions of the
parameter space, combined with a contradiction argument which takes
care of the other regions. The proof of Theorem~\ref{thm:main} is thus
concluded at the end of Section~\ref{sec2}, taking for granted the
results of Sections~\ref{sec3} and \ref{sec4} which are the main
original contributions of this paper.

\bigskip\noindent{\bf Acknowledgements.}
This work was partially supported by grants ANR-18-CE40-0027
(Th.G.) and ANR-14-CE25-0009-01 (D.S.) from the ``Agence Nationale 
de la Recherche''. The authors warmly thank an anonymous referee
for suggesting a more natural way to prove compactness of the 
operator $B_{m,k}$, which is now implemented in Section~\ref{sec32}.

\section{Main steps of the proof}
\label{sec2}

The proof of Theorem~\ref{thm:main} can be divided into four main
steps, which are detailed in the following subsections. The 
first two steps are rather elementary, but the remaining two 
require more technical calculations which are postponed to 
Sections~\ref{sec3} and \ref{sec4}. 

\subsection{Splitting of the linearized operator}\label{sec21}

The linearized operator \eqref{eq:Ldef} can be decomposed as 
$L = A + B$, where $A$ is the first order differential operator
\begin{equation}\label{eq:Adef}
  A u \,=\, -\Omega(r)\partial_\theta u + r\Omega'(r)u_r  \,e_\theta\,,
\end{equation}
and $B$ is the nonlocal operator 
\begin{equation}\label{eq:Bdef}
  B u \,=\, -\nabla P[u] + 2\Omega u_\theta e_r - 2 (r\Omega)' u_r e_\theta\,.
\end{equation}
We recall that $W = r\Omega' + 2\Omega$, and that $P[u]$ denotes the solution 
$p$ of the elliptic equation \eqref{eq:pressure}. As is easily verified, 
both operators $A$ and $B$ preserve the incompressibility condition 
$\div u = 0$, and this is precisely the reason for which we 
included the additional term $r\Omega'(r)u_r  \,e_\theta$ in the 
definition \eqref{eq:Adef} of the advection operator $A$. 

\begin{lem}\label{lem:AB}
Under assumption H1, the linear operator $A$ is the generator of a strongly 
continuous group in the Hilbert space $X$, and $B$ is a bounded linear 
operator in $X$. 
\end{lem}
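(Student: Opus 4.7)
The plan is to prove the two assertions separately. For $A$, I would construct the group explicitly by the method of characteristics; for $B$, I would decompose it into two bounded multiplication operators plus the pressure gradient $-\nabla P[u]$, and invoke the estimates announced in Section~\ref{sec3}.

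For the first statement, observe that under assumption H1 both $\Omega$ and $r\Omega' = W - 2\Omega$ are bounded on $\Rpb$ (from continuity at the origin and the decay $r^2\Omega(r)\to \Gamma$, $r^3\Omega'(r)\to -2\Gamma$ at infinity). Solving the linear transport equation $\partial_t u = Au$ along the characteristics $\theta \mapsto \theta + t\Omega(r)$, one finds that $u_r$ and $u_z$ are merely translated in the angular variable, while $u_\theta$ picks up an additional contribution linear in $t$ coming from the zeroth-order term $r\Omega'(r)\,u_r\,e_\theta$. This yields an explicit formula for $e^{tA}$, a strongly continuous family of bounded operators on $L^2(\R^3)^3$ satisfying $\|e^{tA}u\|_{L^2} \le (1+|t|\,\|r\Omega'\|_\infty)\|u\|_{L^2}$. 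A short calculation in cylindrical coordinates, using the formula for $\div u$ in \eqref{eq:incomp}, gives the pointwise identity $\div(Au) = -\Omega\,\partial_\theta(\div u)$, which shows that $e^{tA}$ preserves the subspace $X$. This is precisely the structural purpose of including the correction term $r\Omega'(r)u_r\,e_\theta$ in the definition \eqref{eq:Adef} of $A$.

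For the second statement, write $Bu = -\nabla P[u] + 2\Omega u_\theta\,e_r - 2(r\Omega)'u_r\,e_\theta$. The last two terms are multiplication operators by bounded functions of $r$: indeed $\Omega$ is bounded under H1, and $(r\Omega)' = \Omega + r\Omega' = W - \Omega$ is bounded as well. For the pressure-gradient term, I would invoke the result to be established in Section~\ref{sec3}: for every $u \in X$, the elliptic equation \eqref{eq:pressure} admits a unique solution (up to an additive constant) with $\nabla P[u] \in L^2(\R^3)^3$ and $\|\nabla P[u]\|_{L^2} \le C\|u\|_{L^2}$. Summing these bounds gives the boundedness of $B$ on $L^2$. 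Finally, a direct computation using \eqref{eq:pressure} yields $\div(Bu) = 0$, so that $B$ indeed maps $X$ into itself.

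The only genuinely substantial step in this scheme is the $L^2$ estimate on $\nabla P[u]$, which is the main obstacle and is deferred to Section~\ref{sec3}: it rests on an analysis of the elliptic equation \eqref{eq:pressure} whose right-hand side involves coefficients with radial singularities (such as $\partial_r^*\Omega = (W-\Omega)/r$). Once that estimate is granted, Lemma~\ref{lem:AB} reduces to the elementary verifications described above, the only mildly subtle point being the role of the correction term in $A$ in preserving incompressibility along the characteristic flow.
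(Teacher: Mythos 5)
Your proposal is correct and follows essentially the same route as the paper: the group $e^{tA}$ is constructed explicitly via the method of characteristics (with the remark that the boundedness of $\Omega$ and $r\Omega'$ under H1 yields $\|e^{tA}\|\le C(1+|t|)$), and the boundedness of $B$ is reduced to the $L^2$ pressure estimate deferred to Section~\ref{sec3}. The identity $\div(Au)=-\Omega\,\partial_\theta(\div u)$ and the verification that both $A$ and $B$ preserve incompressibility are correct and are also noted (more briefly) in the paper immediately before the lemma.
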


\begin{proof}
The evolution equation $\partial_t u = Au$ is equivalent to the system
\[
  \partial_t u_r + \Omega(r) \partial_\theta u_r \,=\, 0\,, \quad
  \partial_t u_\theta + \Omega(r) \partial_\theta u_\theta \,=\, r \Omega' u_r\,, \quad
 \partial_t u_z + \Omega(r) \partial_\theta u_z \,=\, 0\,,
\]
which has the explicit solution
\begin{align}\nonumber
  u_r(r,\theta,z,t) \,&=\, u_r\bigl(r,\theta-\Omega(r) t,z,0\bigr)\,, \\ 
  \label{eq:Agroup} 
  u_\theta(r,\theta,z,t) \,&=\, u_\theta\bigl(r,\theta-\Omega(r) t,z,0\bigr) 
  + r\Omega'(r)t \,u_r\bigl(r,\theta-\Omega(r) t,z,0\bigr)\,, \\ \nonumber
  u_z(r,\theta,z,t) \,&=\, u_z\bigl(r,\theta-\Omega(r) t,z,0\bigr)\,,
\end{align}
for any $t \in \R$. Under assumption H1, the functions $\Omega$ and 
$r \mapsto r \Omega'(r)$ are bounded on $\Rp$. With this 
information at hand, it is straightforward to verify that the formulas 
\eqref{eq:Agroup} define a strongly continuous group $(e^{tA})_{t \in \R}$ 
of bounded operators in $X$. Moreover, there exists a constant $C > 0$ 
such that $\|e^{tA}\|_{X \to X} \le C(1+|t|)$ for all $t \in \R$. 

On the other hand, in view of definition \eqref{eq:pressure}, the pressure 
$p = P[u]$ satisfies the energy estimate
\begin{equation}\label{eq:pressurest}
  \|\partial_r p\|_{L^2(\R^3)}^2 + \|\frac{1}{r} \partial_\theta
  p\|_{L^2(\R^3)}^2 + \|\partial_z p\|_{L^2(\R^3)}^2 \,\le\, 
  C\Bigl(\|u_r\|_{L^2(\R^3)}^2 + \|u_\theta\|_{L^2(\R^3)}^2\Bigr)\,,
 \end{equation}
which is established in Section~\ref{sec3}, see Remark~\ref{rem:p} below.  
This shows that $B$ is a bounded linear operator in $X$. 
\end{proof}

It follows from Lemma~\ref{lem:AB} and standard perturbation theory 
\cite[Section~III.1]{EN} that the linear operator $L = A+B$ is the 
generator of a strongly continuous group of bounded operators in $X$. 
Our goal is to show that, under appropriate assumptions on the vortex 
profile, this evolution group has a mild (i.e., sub-exponential) growth 
as $|t| \to \infty$, as specified in \eqref{eq:eLbound}. 

\subsection{Fourier decomposition}\label{sec22}

To fully exploit the symmetries of the linearized operator 
\eqref{eq:Ldef}, whose coefficients only depend on the radial 
variable $r$, it is convenient to look for velocities and
pressures of the following form
\begin{equation}\label{eq:upFour}
  u(r,\theta,z,t) \,=\, u_{m,k}(r,t)\,e^{im\theta}\,e^{ikz}\,,
  \qquad 
  p(r,\theta,z,t) \,=\, p_{m,k}(r,t)\,e^{im\theta}\,e^{ikz}\,,
\end{equation}
where $m \in \Z$ is the angular Fourier mode and $k \in \R$ is the
vertical wave number. Of course, we assume that $\overline{u_{m,k}} 
= u_{-m,-k}$ and $\overline{p_{m,k}} = p_{-m,-k}$ so as to obtain 
real-valued functions after summing over all possible values 
of $m,k$. When restricted to the Fourier sector
\begin{equation}\label{eq:Xmkdef}
  X_{m,k} \,=\, \Bigl\{u = (u_r,u_\theta,u_z) \in L^2(\Rp,r\dd r)^3\,\Big|\, 
  \partial_r^* u_r + \frac{im}{r} u_\theta + ik u_z = 0\Bigr\}\,,
\end{equation}
the linear operator \eqref{eq:Ldef} reduces to the one-dimensional
operator
\begin{equation}\label{eq:Lmkdef}
  L_{m,k} u \,=\, \begin{pmatrix*}[l]
  -im\Omega u_r + 2 \Omega u_\theta -\partial_r P_{m,k}[u] \\[1mm]
  -im\Omega u_\theta - W u_r  -\frac{im}{r}P_{m,k}[u] \\[1mm]
  -im\Omega u_z -ik P_{m,k}[u]\end{pmatrix*}\,,
\end{equation}
where $P_{m,k}[u]$ denotes the solution $p$ of the following elliptic 
equation on $\Rp$\:
\begin{equation}\label{eq:pmk}
  -\partial_r^* \partial_r p +\frac{m^2}{r^2}\,p + k^2 p
  \,=\, 2im \bigl(\partial_r^* \Omega\bigr) u_r - 2 \partial_r^* 
  \bigl(\Omega\,u_\theta\bigr)\,.
\end{equation}
As in Section~\ref{sec21}, we decompose $L_{m,k} = A_m + B_{m,k}$, where
\begin{equation}\label{eq:ABmkdef}
  A_m u \,=\, \begin{pmatrix*}[l]
  -im \Omega u_r \\[1mm] -im \Omega u_\theta + r\Omega'u_r \\[1mm]
  -im \Omega u_z  \end{pmatrix*}\,, \qquad 
  B_{m,k} u \,=\, \begin{pmatrix*}[l] 
   -\partial_r P_{m,k}[u] + 2 \Omega u_\theta \\[1mm]
   -\frac{im}{r} P_{m,k}[u] - 2(r\Omega)' u_r \\[1mm]
   -ik P_{m,k}[u]  \end{pmatrix*}\,.
\end{equation}

The following result is the analog of \cite[Proposition~2.1]{GS1} in the
present context. 

\begin{prop}\label{prop:AB} Assume that the vorticity 
profile $W$ satisfies assumption H1 and the normalization condition
$W(0) = 2$. For any $m \in \Z$ and any $k \in \R$,\\[1mm]
1) The linear operator $A_m$ defined by \eqref{eq:ABmkdef} is bounded 
in $X_{m,k}$ with spectrum given by
\begin{equation}\label{eq:spAm}
  \sigma(A_m) \,=\, \Bigl\{z \in \C\,\Big|\, z = -imb 
  \hbox{ for some } b \in [0,1]\Bigr\}\,;
\end{equation}
2) The linear operator $B_{m,k}$ defined by \eqref{eq:ABmkdef} is 
compact in $X_{m,k}$. 
\end{prop}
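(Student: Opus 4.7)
\emph{Part 1.} The plan is to treat $A_m$ as a multiplication operator by $-im\Omega(r)$ with a bounded lower-triangular coupling on the $\theta$-component coming from the $r\Omega'(r) u_r\, e_\theta$ term. I would first verify by direct computation, using $\partial_r^*(\Omega f) = \Omega' f + \Omega \partial_r^* f$, that $A_m$ preserves the incompressibility constraint defining $X_{m,k}$---the role of the extra $r\Omega' u_r\, e_\theta$ term in \eqref{eq:ABmkdef} being precisely to cancel the commutator between multiplication by $\Omega$ and $\partial_r^*$. Boundedness on $X_{m,k}$ is then immediate from H1, which yields $\Omega, r\Omega' \in L^\infty(\Rp)$. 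For the resolvent set, if $\lambda \notin \{-imb : b \in [0,1]\}$ then continuity of $\Omega$ together with $\Omega(\Rp) = (0,1]$ give $|\lambda + im\Omega(r)| \geq \delta > 0$ uniformly in $r$, so the explicit formula
\begin{equation*}
  (\lambda - A_m)^{-1} v \,=\, \Bigl(\tfrac{v_r}{\lambda+im\Omega},\ \tfrac{v_\theta}{\lambda+im\Omega} + \tfrac{r\Omega' v_r}{(\lambda+im\Omega)^2},\ \tfrac{v_z}{\lambda+im\Omega}\Bigr)
\end{equation*}
defines a bounded operator that preserves $X_{m,k}$ by the same commutator calculation. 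Conversely, for $m \neq 0$ and $\lambda = -imb$ with $b \in [0,1]$, a divergence-free Weyl sequence concentrated radially near a point $r_0$ where $\Omega(r_0) = b$ (with $r_0 \to \infty$ or $r_0 \to 0$ in the endpoint cases, and with a large vertical wave number to accommodate the constraint) establishes $\lambda \in \sigma(A_m)$; when $m = 0$ the kernel of $A_0$ in $X_{0,k}$ is manifestly nontrivial, so $0 \in \sigma(A_0)$ as required.

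\emph{Part 2.} Compactness of $B_{m,k}$ should follow by combining elliptic estimates for the pressure equation \eqref{eq:pmk}---yielding $\|\nabla p\|_{L^2} \lesssim \|u\|_{L^2}$ with $p = P_{m,k}[u]$, to be proved in Section~\ref{sec3}---with the smoothness and $1/r^2$-decay at infinity of the multipliers $\Omega, (r\Omega)', \partial_r^*\Omega$ that enter \eqref{eq:ABmkdef} and the source of \eqref{eq:pmk}, both of which are consequences of H1. The plan is to realize $B_{m,k}$ as a composition of bounded maps into a space compactly embedded in $X_{m,k}$. Given a bounded sequence $u^n$ in $X_{m,k}$ with weakly convergent subsequence $u^n \rightharpoonup u^\infty$, I would split the radial variable into an interior region $r \leq R$, on which elliptic regularity for $p^n = P_{m,k}[u^n]$ combined with Rellich's theorem gives strong compactness of the pressure-gradient contribution, and an exterior region $r \geq R$, on which the decay of the multipliers renders the contribution uniformly $o_R(1)$.

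The main obstacle is the purely multiplicative terms $2\Omega u_\theta$ and $2(r\Omega)' u_r$ appearing in \eqref{eq:ABmkdef}: multiplication by a bounded function that merely vanishes at infinity is not compact on $L^2(\Rp, r\,dr)$, so these terms give no compactness on their own. The resolution is to exploit the divergence-free constraint---for $m \neq 0$, writing $u_\theta = \tfrac{ir}{m}(\partial_r^* u_r + ik u_z)$ and integrating by parts transfers the radial derivative from $u$ onto the decaying coefficient---or, as suggested by the referee, to absorb the multiplicative contributions directly into a modified pressure so that the whole of $B_{m,k} u$ is realized as the gradient of a function satisfying an elliptic equation whose source lies in a compact subset of $H^{-1}$. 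After this reformulation, the composition of an elliptic inverse with multiplication by coefficients of integrable decay is compact by Rellich--Kondrachov, yielding strong convergence of $B_{m,k} u^n$ in $X_{m,k}$.
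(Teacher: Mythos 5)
Part 1 of your proposal is sound and matches the approach the paper takes (deferring the rigorous spectral computation to \cite[Proposition~2.1]{GS1}): multiplication by $-im\Omega$, bounded lower-triangular coupling, preservation of the constraint via the commutator identity, explicit resolvent formula off the candidate spectrum, and a Weyl sequence argument for inclusion. The explicit inverse formula you wrote and its compatibility with the divergence constraint check out.

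Part 2 contains a genuine structural error and misses the central idea. You propose to ``absorb the multiplicative contributions directly into a modified pressure so that the whole of $B_{m,k} u$ is realized as the gradient of a function satisfying an elliptic equation.'' But $B_{m,k}u$ is \emph{not} a gradient: it satisfies the same divergence-free constraint as $u$, i.e.\ $\partial_r^* B_r + \frac{im}{r}B_\theta + ik B_z = 0$ (which is just a restatement of the pressure equation \eqref{eq:pmk}), and is therefore orthogonal to gradients in the relevant sense. The paper's actual mechanism is different: it combines the divergence-free constraint for $B$ with the algebraic relation $\partial_r(rB_\theta) = im B_r + rR_1$ (coming from the definitions and the constraint on $u$) to eliminate $B_\theta, B_z$ and derive the \emph{scalar second-order elliptic equation} \eqref{eq:Br} for the radial component $B_r$, with a right-hand side controlled in $L^2$ by $\|u\|_{L^2}$. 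That elliptic equation is the smoothing mechanism that defeats the obstruction you correctly identify (pure multiplication operators never being compact on $L^2$). The resulting $H^1$-type and weighted estimates on $B_r$ and $B_\theta$ then feed into a Rellich-type criterion (Lemma~\ref{lem:compcrit}).

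Your interior/exterior splitting would also fail to close as written, for a subtler reason: the case $m = \pm 1$ is degenerate. In \eqref{eq:Br} the zero-order term $(m^2-1)/r^2$ vanishes when $|m|=1$, and for $m=\pm1$, $k=0$ one has $rB_r \notin L^2(\Rp, r\dd r)$ in general, so the decay of the multipliers does not give a uniform $o_R(1)$ tail. The paper handles this by switching to the combination $D = B_r + imB_\theta$ and proving only $\|r^\alpha B_r\|_{L^2} + \|r^\alpha D\|_{L^2} \lesssim \|u\|_{L^2}$ for $\alpha < 1$ (Lemma~\ref{lem:estimBm1k0}). Nothing in your proposal anticipates this case distinction, and it is exactly the kind of marginal decay that an interior/exterior cut-off argument glosses over.
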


\begin{proof}
Definition \eqref{eq:ABmkdef} shows that $A_m$ is essentially the 
multiplication operator by the function $-im\Omega$, whose range is 
precisely the imaginary interval \eqref{eq:spAm} since the angular 
velocity is normalized so that $\Omega(0) = 1$. So the first assertion
in Proposition~\ref{prop:AB} is rather obvious, and can be established 
rigorously by studying the resolvent operator $(z-A_m)^{-1}$, see 
\cite[Proposition~2.1]{GS1}. The proof of the second assertion 
requires careful estimates on a number of quantities related  
to the pressure, and is postponed to Section~\ref{sec32}. 
\end{proof}

\subsection{Control of the discrete spectrum}\label{sec23}

For any  $m \in \Z$ and any $k \in \R$, it follows from 
Proposition~\ref{prop:AB} and Weyl's theorem \cite[Theorem~I.4.1]{EE}
that the {\em essential spectrum} of the operator $L_{m,k} = A_m + 
B_{m,k}$ is the purely imaginary interval \eqref{eq:spAm}, whereas 
the rest of the spectrum entirely consists of isolated eigenvalues 
with finite multiplicities\footnote{It is not difficult to verify that, 
in the present case, the various definitions of the essential
spectrum given e.g. in \cite[Section~I.4]{EE} are all equivalent.}. 
To prove spectral stability, it is therefore sufficient to show 
that $L_{m,k}$ has no {\em eigenvalue} outside the imaginary axis. 
Given any $s \in \C$ with $\Re(s) \neq 0$, the eigenvalue 
equation $(s - L_{m,k})u = 0$ is equivalent to the system
\begin{equation}\label{eq:eigsys}
  \begin{array}{l}
  \gamma(r) u_r - 2\Omega(r)u_\theta \,=\, -\partial_r p\,, \\[1mm]
  \gamma(r) u_\theta  + W(r)u_r \,=\, -\frac{im}{r} p\,, \\[1mm]
  \gamma(r) u_z  \,=\, -ik p\,, \end{array} \qquad\quad
  \partial_r^* u_r + \frac{im}{r} u_\theta + ik u_z \,=\, 0\,,
\end{equation}
where $\gamma(r) = s + im\Omega(r)$. If $(m,k) \neq (0,0)$, one can 
eliminate the pressure $p$ and the velocity components $u_\theta$, $u_z$
from system \eqref{eq:eigsys}, which then reduces to a scalar equation 
for the radial velocity only\:
\begin{equation}\label{eq:eigscalar}
   -\partial_r \biggl(\frac{r^2 \partial_r^* u_r}{m^2 + k^2 r^2}\biggr) 
   + \biggl\{1 + \frac{1}{\gamma(r)^2}\frac{k^2 r^2 
  \Phi(r)}{m^2 + k^2r^2} + \frac{imr}{\gamma(r)}\partial_r
  \Bigl(\frac{W(r)}{m^2+k^2r^2}\Bigr)\biggr\}u_r \,=\, 0\,,
\end{equation}
where $\Phi = 2\Omega W$ is the Rayleigh function. The derivation 
of \eqref{eq:eigscalar} is standard and can be found in many textbooks, 
see e.g. \cite[Section 15]{DR}. It is reproduced in Section~\ref{sec41} 
below in the more general context of the resolvent equation. 

The main result of our previous work on columnar vortices 
can be stated as follows. 

\begin{prop}\label{prop:GS1} {\bf\cite{GS1}} Under assumptions H1 and H2, 
the elliptic equation \eqref{eq:eigscalar} has no nontrivial solution  
$u_r \in L^2(\Rp,r\dd r)$ if $\Re(s) \neq 0$. 
\end{prop}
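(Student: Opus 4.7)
Since $\Re(s) \neq 0$, the function $\gamma(r) = s + im\Omega(r)$ stays bounded away from zero on $\Rp$, so the coefficients of equation \eqref{eq:eigscalar} are smooth and no critical layer is present. Replacing $s$ by $\bar s$ if necessary, one may assume $\Re(s) > 0$. The axisymmetric case $m = 0$ is essentially elementary: the last bracketed term in \eqref{eq:eigscalar} reduces to a real coefficient whose sign is controlled by the positivity of the Rayleigh function $\Phi$ (see \eqref{eq:Phidef}). Testing against $\overline{u_r}\, r$ and integrating by parts produces an energy identity whose real part forces $u_r \equiv 0$.

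For $m \neq 0$, I would follow the three-step strategy already alluded to in the introduction. The first ingredient is the Howard--Gupta criterion: after a Miles-type change of dependent variable designed to symmetrize the principal part, and testing against a suitable multiple of $\overline{u_r}/|\gamma|^2$, manipulation of real and imaginary parts yields a sufficient condition for the absence of eigenvalues with $\Re s \neq 0$. Because $J$ defined in \eqref{eq:Jdef} is decreasing by assumption H2, the condition takes the form $k^2/m^2 \geq c$ for some explicit constant depending on $\sup J$, and hence covers only part of the parameter space.

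The second ingredient is a homotopy argument to cover the remaining regime where $k^2/m^2$ is small. One continuously deforms $(m,k)$, or the vortex profile itself, from a configuration lying inside the Howard--Gupta region to the parameters of interest, and uses the continuity of the discrete spectrum (justified via Proposition~\ref{prop:AB} and Weyl's theorem) to track possible eigenvalues. Standard resolvent estimates at spatial infinity, together with a priori bounds on the radial decay of solutions to \eqref{eq:eigscalar}, prevent eigenvalues from escaping to infinity in $s$. So the only way an eigenvalue could appear during the deformation is to emerge from the essential spectrum $\sigma(A_m)$ along the imaginary axis as $\Re s \to 0$.

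The main obstacle, and the hardest part of the argument, is the third ingredient: a detailed local analysis of \eqref{eq:eigscalar} at a critical radius $r_c$ where $\gamma(r_c) = 0$, which appears only in that limit. Near $r_c$ the equation has a regular singular point whose indicial exponents depend on the local value of a Richardson-type quantity; a Frobenius decomposition yields two linearly independent solutions with prescribed asymptotics. The strict monotonicity $J'(r_c) < 0$ coming from assumption H2, combined with a careful connection analysis across $r_c$, ensures that the branch matching the $L^2(\Rp, r\,dr)$ behavior imposed at both $r = 0$ and $r = \infty$ cannot be nontrivial. Hence no eigenvalue can actually emerge from the essential spectrum during the homotopy, which rules out nontrivial solutions of \eqref{eq:eigscalar} for every $s$ with $\Re(s) \neq 0$. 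This critical-layer analysis is precisely the main technical achievement of \cite{GS1}, from which the proposition is imported.
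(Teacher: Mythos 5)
The paper does not prove this proposition: it is imported verbatim from the companion paper~\cite{GS1}, as the citation in the statement makes explicit, and the body of the present paper only uses it as a black box to rule out eigenvalues off the imaginary axis (Corollary~\ref{cor:GS1}). Your sketch correctly recognizes that the result is borrowed, and the three-ingredient strategy you describe --- the Howard--Gupta criterion covering the regime $k^2/m^2$ large, a homotopy in the parameters or in the profile to reach the remaining regime, and a Frobenius-type critical-layer analysis at $\gamma(r_c)=0$ to show no eigenvalue can emerge from the essential spectrum as $\Re(s)\to 0$ --- matches the outline given in the paper's own introduction when it discusses~\cite{GS1}. A minor loose point in your axisymmetric paragraph: when $m=0$ the bracketed coefficient is $1+\Phi(r)/s^2$, which is real only when $s^2$ is real; one first takes the imaginary part of the energy identity to reduce to real $s$ (using $\Phi>0$), and only then does the real part force $u_r\equiv 0$. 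Subject to that small clarification, your reconstruction is consistent with what the paper claims~\cite{GS1} establishes.
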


\begin{cor}\label{cor:GS1} Under assumptions H1 and H2, the 
operator $L_{m,k}$ in $X_{m,k}$ has no eigenvalue outside the 
imaginary axis. 
\end{cor}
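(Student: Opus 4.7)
The plan is to exploit the Fourier-sector reduction already established earlier. I fix an eigenvalue $s$ of $L_{m,k}$ with $\Re(s) \neq 0$ together with a corresponding eigenfunction $u \in X_{m,k}$, and argue separately in the two cases $(m,k) \neq (0,0)$ and $(m,k) = (0,0)$.

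For the main case $(m,k) \neq (0,0)$, the key observation is that the coefficient $\gamma(r) = s + im\Omega(r)$ appearing in \eqref{eq:eigsys} satisfies $\Re\gamma(r) = \Re(s) \neq 0$ for every $r \ge 0$, so $\gamma$ is nowhere zero and $1/\gamma$ is a bounded smooth function on $\Rpb$. I would use this to eliminate $u_\theta$, $u_z$, and $p$ in favour of $u_r$: solving the third equation of \eqref{eq:eigsys} for $u_z$ in terms of $p$, the second for $u_\theta$ in terms of $u_r$ and $p$, using the divergence-free condition to express $p$ as a function of $u_r$, and finally substituting everything back into the first equation yields the scalar ODE \eqref{eq:eigscalar}. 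This computation is exactly the one performed in Section~\ref{sec41}, and I would simply quote it. Since $u \in X_{m,k}$ implies $u_r \in L^2(\Rp, r\dd r)$, Proposition~\ref{prop:GS1} then forces $u_r \equiv 0$.

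Once $u_r \equiv 0$ is established, the vanishing of the remaining components is read off from \eqref{eq:eigsys} directly: the second and third equations give $u_\theta = -imp/(r\gamma)$ and $u_z = -ikp/\gamma$, and substituting into the incompressibility constraint (with $\partial_r^* u_r = 0$) yields $(p/\gamma)\bigl(m^2/r^2 + k^2\bigr) = 0$, which forces $p \equiv 0$ since $m^2/r^2 + k^2 > 0$; hence $u_\theta = u_z = 0$ as well, and the first equation of \eqref{eq:eigsys} is then satisfied trivially. The degenerate sector $(m,k) = (0,0)$ requires no new idea: the constraint in $X_{0,0}$ reduces to $\partial_r^* u_r = 0$, whose only $L^2(\Rp, r\dd r)$ solution is $u_r = 0$, after which $s u_\theta + W u_r = 0$ and $s u_z = 0$ force $u_\theta = u_z = 0$ because $s \neq 0$.

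The only nontrivial ingredient is Proposition~\ref{prop:GS1}, which encodes the hard analytic work of \cite{GS1}. The elimination step itself is bookkeeping, but it relies crucially on $\gamma$ being nonvanishing, which is precisely where the hypothesis $\Re(s) \neq 0$ enters; if $\Re(s)$ were zero then $\gamma$ could vanish on a critical layer and the reduction to \eqref{eq:eigscalar} would break down, consistent with the fact that an interval of the imaginary axis does belong to the spectrum.
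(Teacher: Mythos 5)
Your argument is correct and follows essentially the same route as the paper: reduce the eigenvalue equation to the scalar ODE \eqref{eq:eigscalar} (quoting the derivation in Section~\ref{sec41}), invoke Proposition~\ref{prop:GS1} to get $u_r \equiv 0$, and then propagate the vanishing to $u_\theta$, $u_z$; the degenerate case $m=k=0$ is handled identically. The only cosmetic difference is that the paper deduces $u_\theta = u_z = 0$ directly from the representation formulas \eqref{eq:uthetaexp}, \eqref{eq:uzexp} with $f = 0$, whereas you first show $p \equiv 0$ by substituting $u_\theta = -imp/(r\gamma)$ and $u_z = -ikp/\gamma$ into the incompressibility constraint — an equivalent and equally valid bookkeeping path.
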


\begin{proof}
Assume that $u \in X_{m,k}$ satisfies $L_{m,k} u = su$ for some complex 
number $s$ with $\Re(s) \neq 0$. If $m = k = 0$, the incompressibility condition 
shows that $\partial_r^* u_r = 0$, hence $u_r = 0$, and since 
$\gamma(r) = s \neq 0$ the second and third relations in \eqref{eq:eigsys} 
imply that $u_\theta = u_r=u_z = 0$. If $(m,k) \neq (0,0)$, the radial 
velocity $u_r$ satisfies \eqref{eq:eigscalar}, and 
Proposition~\ref{prop:GS1} asserts that $u_r = 0$. Using the 
relations \eqref{eq:uthetaexp}, \eqref{eq:uzexp} below (with 
$f = 0$), we conclude that $u_\theta = u_z = 0$. 
\end{proof}

\subsection{Uniform resolvent estimates}\label{sec24}

Under assumptions H1, H2, it follows from Proposition~\ref{prop:AB} and
Corollary~\ref{cor:GS1} that the spectrum of the linear operator
$L_{m,k} = A_m + B_{m,k}$ is entirely located on the imaginary axis.
Equivalently, for any $s \in \C$ with $\Re(s) \neq 0$, the resolvent
$(s - L_{m,k})^{-1}$ is well defined as a bounded linear operator in
$X_{m,k}$. The main technical result of the present paper, whose 
proof is postponed to Section~\ref{sec4} below, asserts that
the resolvent bound is uniform with respect to the Fourier parameters
$m$ and $k$, and to the spectral parameter $s \in \C$ if $\Re(s)$ is
fixed.

\begin{prop}\label{prop:main} 
Assume that the vortex profile satisfies assumptions H1, H2. 
Then for any real number $a \neq 0$, one has
\begin{equation}\label{eq:unifres}
  \sup_{\Re(s) = a} \,\sup_{m \in \Z} ~\sup_{k \in \R}~
  \bigl\|(s - L_{m,k})^{-1}\bigr\|_{X_{m,k} \to X_{m,k}} \,<\, \infty\,.
\end{equation}
\end{prop}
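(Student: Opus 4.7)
The plan is to argue by contradiction: I would establish explicit resolvent bounds in a ``large-parameter'' subset of $\{(m,k,s) : \Re(s) = a\}$, and rule out failure in the complementary bounded subset by a compactness argument that reduces matters to Corollary~\ref{cor:GS1}. To set the stage, I would recast the resolvent equation: given $f \in X_{m,k}$, writing out $(s-L_{m,k})u = f$ as the system analogous to \eqref{eq:eigsys} with right-hand side $f$ and eliminating $p$, $u_\theta$, $u_z$ in favour of $u_r$ (as in the derivation of \eqref{eq:eigscalar}) yields an inhomogeneous scalar second-order equation $\mathcal{E}_{m,k,s} u_r = \mathcal{F}[f]$ on $\Rp$, together with recovery formulas for $u_\theta$, $u_z$, $p$ in terms of $u_r$ and $f$. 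Controlling $\|u\|_{X_{m,k}}$ is then reduced to controlling $\|u_r\|_{L^2(\Rp,r\dd r)}$, modulo terms linear in $f$.

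I would then derive explicit a priori bounds in two complementary large-parameter regions. When $|\gamma(r)| = |s + im\Omega(r)|$ is uniformly large in $r$ (for instance if $|\Im(s)| > 2|m|\,\|\Omega\|_\infty$), the potential in $\mathcal{E}_{m,k,s}$ is a small perturbation of $1$, and coercivity of the associated Hardy-weighted quadratic form $\int_0^\infty \frac{r^2}{m^2+k^2r^2}|\partial_r^* u_r|^2 r\dd r$ yields $\|u_r\| \lesssim \|\mathcal{F}[f]\|$. When $m^2+k^2$ is large, the pressure equation \eqref{eq:pmk} is strongly elliptic and gives smallness of $\nabla p$ relative to $\|u\|$; pairing $(s-L_{m,k})u = f$ with $u$ in $L^2(r\dd r)$ and taking the real part (using $\Re(s) = a \neq 0$) then closes a direct energy estimate. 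These regions together cover everything outside a fixed compact set of $(m,k,\Im(s))$-values.

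In the remaining bounded region I argue by contradiction. If \eqref{eq:unifres} failed, there would exist sequences $m_n \in \Z$, $k_n \in \R$, $s_n \in \C$ with $\Re(s_n) = a$, and $u_n \in X_{m_n,k_n}$ with $\|u_n\| = 1$ and $f_n := (s_n - L_{m_n,k_n}) u_n \to 0$. The a priori bounds force $(m_n, k_n, \Im(s_n))$ into a compact set, so along a subsequence $m_n = m$ is constant, $k_n \to k_\star$, and $s_n \to s_\star$ with $\Re(s_\star) = a$. Passing to a weak-$L^2$ subsequential limit $u_n \rightharpoonup u_\star$, whose incompressibility constraint passes to the limit so that $u_\star \in X_{m,k_\star}$, and invoking the compactness of $B_{m,k}$ from Proposition~\ref{prop:AB} together with its continuity in $k$, I pass to the limit and obtain $(s_\star - L_{m,k_\star}) u_\star = 0$. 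By Corollary~\ref{cor:GS1}, $u_\star = 0$. Rewriting the equation as $(s_n - A_{m_n})u_n = f_n + B_{m_n,k_n} u_n$, both terms on the right then tend to $0$ strongly, and since $(s - A_m)^{-1}$ is uniformly bounded for $\Re(s) = a$, this forces $u_n \to 0$ strongly, contradicting $\|u_n\| = 1$.

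The hard part will be the explicit a priori analysis in the transitional regime where $|m|$ and $|\Im(s)|$ are both moderately large, since then the critical layer $\{r > 0 : \Im(s) + m\Omega(r) = 0\}$ can lie inside the support of the vortex and the factor $1/\gamma$ appearing in \eqref{eq:eigscalar} is only controlled by $1/|a|$ there. Closing the estimate in this regime presumably requires a careful manipulation of the quadratic form attached to $\mathcal{E}_{m,k,s}$, exploiting Rayleigh's positivity $\Phi > 0$ and the monotonicity $J' < 0$ of assumption H2 in the spirit of the Howard--Gupta criterion; this is likely the technical heart of Section~\ref{sec4}.
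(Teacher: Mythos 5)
Your overall architecture — explicit a priori estimates in certain parameter regions, plus a contradiction argument elsewhere culminating in Corollary~\ref{cor:GS1} — matches the paper's, but there is a critical gap in how you partition the parameter space.

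You assert that your explicit estimates ``cover everything outside a fixed compact set of $(m,k,\Im(s))$-values'', so that in the contradiction step one may extract a convergent subsequence with $m_n$ eventually constant. This is false. Look at what the paper actually proves explicitly (Corollary~\ref{cor:explicit}): the regime \emph{not} covered by any explicit bound is precisely $m \to \infty$ together with $k/m$ in a fixed interval $[a^2/C, C/a]$ and $b := -\Im(s)/m$ in a compact subinterval of $(0,1)$. This regime is \emph{unbounded} in $m$ and $k$ simultaneously, so no subsequence has $m_n$ eventually constant, and your weak-limit / compact-perturbation argument (which relies on settling on a single fixed Fourier sector $X_{m,k_\star}$) simply does not apply there. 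The paper treats this case separately in the proof of \eqref{eq:remaining1} by a rescaling (blow-up) at the critical layer $r_n = \Omega^{-1}(b_n)$: after zooming in at scale $1/m_n$, the rescaled system converges to a limiting constant-coefficient ODE on $\R$, which reduces to a modified Bessel equation $-U_r'' + \bigl(\kappa^2 - J(\bar r)\delta^2/(y+ic)^2\bigr)U_r = 0$; one then shows this limit equation has no nontrivial $L^2(\R)$ solution, giving the contradiction. This is the principal novel step and it is absent from your proposal.

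Your second explicit estimate is also not sound as stated. Pairing $(s - L_{m,k})u = f$ with $u$ and taking the real part does eliminate the pressure (since $\nabla p \perp u$), but what remains from the Coriolis-type terms is $\Re\langle L_{m,k}u,u\rangle = -\int_0^\infty r\Omega'(r)\,\Re(u_r\bar u_\theta)\,r\dd r$, which is $O(\|r\Omega'\|_{L^\infty}\|u\|^2)$ and \emph{not} small for large $m^2+k^2$. So this only yields a bound when $a > \|r\Omega'\|_{L^\infty}$, i.e.\ it reproduces the large-$a$ case but does not handle large $m^2 + k^2$ at fixed $a$. Concretely, the paper obtains its explicit bounds via the scalar equation for $u_r$ and, crucially, via the Howard--Gupta substitution $v = \gamma^{-1/2}u_r$ (Lemma~\ref{lem:HG12}), which is needed to control the critical-layer contribution; you gesture at this in your last paragraph but do not incorporate it into a working scheme. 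Finally, you should also check that in your bounded-region step the limiting $k_\star$ cannot vanish; the paper's explicit condition $ma^2 \ge Ck$ in case 3) of Corollary~\ref{cor:explicit} ensures the residual compact region has $k$ bounded away from zero, so that Lemma~\ref{lem:localbd}, which requires $k^*\neq 0$, applies.
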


Equipped with the uniform resolvent estimate given by 
Proposition~\ref{prop:main}, it is now straightforward to 
conclude the proof of our main result. 

\medskip
\noindent{\bf End of the proof of Theorem~\ref{thm:main}.} 
We know from Lemma~\ref{lem:AB} that the operator $L$ defined 
by \eqref{eq:Ldef} is the generator of a strongly continuous 
group of bounded linear operators in the Hilbert space $X$. 
For any $a \neq 0$, we set
\begin{equation}\label{eq:Fdef}
  F(a) \,=\, \sup_{\Re(s) = a} \bigl\|(s - L)^{-1}\bigr\|_{X \to X} 
  ~\leq\, \sup_{\Re(s) = a} \,\sup_{m \in \Z} ~\sup_{k \in \R}~
  \bigl\|(s - L_{m,k})^{-1}\bigr\|_{X_{m,k} \to X_{m,k}}\,,
\end{equation}
where the last inequality follows from Parseval's theorem.  The
function $F : \R^* \to (0,\infty)$ defined by \eqref{eq:Fdef} is even
by symmetry, and a straightforward perturbation argument shows 
that
\[
  \frac{F(a)}{1 + |b| F(a)} \,\le\, F(a+b) \,\le\,
  \frac{F(a)}{1 - |b| F(a)}\,, 
\]
for all $a \neq 0$ and all $b \in \R$ with $|b| F(a) < 1/2$, 
so that $F$ is continuous.  Moreover, the Hille-Yosida theorem
\cite[Theorem~II.3.8]{EN} asserts that $F(a) = \cO(|a|^{-1})$ as
$|a| \to \infty$, and it follows that the resolvent bound
\eqref{eq:resbound} holds for any $a > 0$. In particular, given any
$\epsilon > 0$, the semigroup $\bigl(e^{t(L-\epsilon)}\bigl)_{t \ge 0}$ 
satisfies the assumptions of the Gearhart-Pr\"uss theorem
\cite[Theorem~V.1.11]{EN}, and is therefore uniformly bounded. This
gives the desired bound \eqref{eq:eLbound} for positive times, and a
similar argument yields the corresponding estimate for $t \le 0$. 
The proof of Theorem~\ref{thm:main} is thus complete. \QED

\section{Estimates for the pressure}
\label{sec3}

In this section, we give detailed estimates on the pressure
$p = P_{m,k}[u]$ satisfying \eqref{eq:pmk}. That quantity appears in
all components of the vector-valued operator $B_{m,k}$ introduced
in \eqref{eq:ABmkdef}, and our ultimate goal is to prove the last part
of Proposition~\ref{prop:AB}, which asserts that $B_{m,k}$ is a
compact operator in the space $X_{m,k}$.

We assume henceforth that the vorticity profile $W$ satisfies
assumption~H1 in Section~\ref{sec1}. To derive energy estimates, it is
convenient in a first step to suppose that the divergence-free vector
field $u \in X_{m,k}$ is smooth and has compact support in $(0,+\infty)$. 
As is shown in Proposition~\ref{prop:approximation} in the Appendix,
the family of all such vector fields is dense in $X_{m,k}$, and the 
estimates obtained in that particular case remain valid for all 
$u \in X_{m,k}$ by a simple continuity argument. With this observation 
in mind, we now proceed assuming that $u$ is smooth and 
compactly supported. 

Equation \eqref{eq:pmk} has a unique solution $p$ such that the
quantities $\partial_r p$, $mp/r$, and $kp$ all belong to
$L^2(\Rp,r\dd r)$; the only exception is the particular case
$m = k =0$ where uniqueness holds up to an additive constant. One
possibility to justify this claim is to return to the cartesian
coordinates and to consider the elliptic equation \eqref{eq:pressure}
for the pressure $p : \R^3 \to \R$, which can be written in the form
\begin{equation}\label{eq:pressurebis}
  - \Delta p \,=\, 2r\Omega'(r) \bigl(e_r, (\nabla u)e_\theta\bigr) - 
  2 \Omega(r) (\curl u)\cdot e_z\,,
\end{equation}
where $r = (x_1^2 + x_2^2)^{1/2}$. Note that the right-hand side is
regular (of class $\cC^2$ under assumption H1) and has compact support
in the horizontal variables. Eq.~\eqref{eq:pressurebis} thus holds in
the classical sense, and uniqueness up to a constant of a bounded 
solution $p$ is a consequence of Liouville's theorem for harmonic
functions in $\R^3$. In our framework, however, using
\eqref{eq:pressurebis} is not the easiest way to prove existence,
because according to \eqref{eq:upFour} we are interested in solutions
which depend on the variables $\theta$, $z$ in a specific way, and do
not decay to zero in the vertical direction.

If we restrict ourselves to the Fourier sector indexed by $(m,k)$,
existence of a solution to \eqref{eq:pmk} is conveniently established
using the explicit representation formulas collected in
Lemma~\ref{lem:rep} below. As can be seen from these expressions, the
solution $p$ of \eqref{eq:pmk} is smooth near the origin and satisfies
the homogeneous Dirichlet condition at the artificial boundary $r = 0$
if $|m| \ge 1$, and the homogeneous Neumann condition if $m = 0$ or
$|m| \ge 2$. As $r \to \infty$, it follows from \eqref{eq:repmk},
\eqref{eq:repm0} that $p(r)$ decays to zero exponentially fast if
$k \neq 0$, and behaves like $r^{-|m|}$ if $m \neq 0$ and $k = 0$.  In
the very particular case where $m = k = 0$, the pressure vanishes near
infinity if $u$ has compact support. Boundary conditions and decay
properties for the derivatives of $p$ can be derived in a similar way,
and will (often implicitly) be used in the proofs below to neglect boundary
terms when integrating by parts.

For functions or vector fields defined on $\Rp,$
we always use in the sequel the notation $\|\cdot\|_{L^2}$ to denote
the Lebesgue $L^2$ norm with respect to the measure $r\dd r$.
The corresponding Hermitian inner product will be denoted by 
$\langle \cdot,\cdot\rangle$.

\subsection{Energy estimates}\label{sec31}

Throughout this section, we assume that $u \in X_{m,k}$ and we denote by 
$p = P_{m,k}[u]$ the solution of \eqref{eq:pmk} given by 
Lemma~\ref{lem:rep}. We begin with a standard $L^2$ energy estimate.

\begin{lem}\label{lem:ell}
For any $u \in X_{m,k}$ we have  
\begin{equation}\label{eq:pmkest}
  \|\partial_r p\|_{L^2}^2 +  \Bigl\|\frac{mp}{r}\Bigr\|_{L^2}^2 + 
  \|kp\|_{L^2}^2 \,\le\, C \bigl(\|u_r\|_{L^2}^2 + 
  \|u_\theta\|_{L^2}^2\bigr)\,,
\end{equation}
where the constant $C > 0$ depends only on $\Omega$. 
\end{lem}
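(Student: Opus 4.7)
The plan is to perform the standard $L^2$ energy estimate on \eqref{eq:pmk} by testing against $\bar p$ with the natural weight $r\dd r$, and then to absorb the resulting cross terms via Young's inequality. Throughout I take $u \in X_{m,k}$ smooth and compactly supported in $(0,+\infty)$, in which case the representation formulas in Lemma~\ref{lem:rep} (and the decay/boundary properties of $p$ discussed just before this lemma) guarantee that all integrations by parts produce no boundary contributions; the general case then follows by density.

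First, multiplying \eqref{eq:pmk} by $\bar p$ and integrating over $(0,\infty)$ against $r\dd r$, and using that $\partial_r^*$ is (minus) the formal adjoint of $\partial_r$ in $L^2(\Rp, r\dd r)$, the left-hand side becomes exactly
\[
  \|\partial_r p\|_{L^2}^2 + \Bigl\|\frac{mp}{r}\Bigr\|_{L^2}^2 + \|kp\|_{L^2}^2,
\]
which is what we want to control. So everything reduces to bounding the right-hand side of \eqref{eq:pmk} tested against $\bar p$.

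For the first source term I rewrite
\[
  \int_0^\infty 2im(\partial_r^*\Omega)u_r\,\bar p\,r\dd r \,=\, 2i\Bigl\langle (r\partial_r^*\Omega)u_r,\,\frac{mp}{r}\Bigr\rangle,
\]
and observe that by \eqref{eq:OmW} one has $r\partial_r^*\Omega = r\Omega' + \Omega = W - \Omega$, which is bounded under assumption~H1. Cauchy--Schwarz then yields
\[
  \Bigl|\int 2im(\partial_r^*\Omega)u_r\,\bar p\,r\dd r\Bigr| \,\le\, 2\|W-\Omega\|_\infty\,\|u_r\|_{L^2}\,\Bigl\|\frac{mp}{r}\Bigr\|_{L^2}.
\]
For the second source term I integrate by parts to move $\partial_r^*$ onto $\bar p$, obtaining
\[
  -2\int_0^\infty \partial_r^*(\Omega u_\theta)\,\bar p\,r\dd r \,=\, 2\int_0^\infty \Omega u_\theta\,\partial_r\bar p\,r\dd r,
\]
and Cauchy--Schwarz gives the bound $2\|\Omega\|_\infty\|u_\theta\|_{L^2}\|\partial_r p\|_{L^2}$.

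Combining, Young's inequality $2ab \le \tfrac12 a^2 + 2b^2$ applied to both products absorbs $\tfrac12\|mp/r\|_{L^2}^2$ and $\tfrac12\|\partial_r p\|_{L^2}^2$ into the left-hand side, leaving
\[
  \tfrac12\|\partial_r p\|_{L^2}^2 + \tfrac12\Bigl\|\frac{mp}{r}\Bigr\|_{L^2}^2 + \|kp\|_{L^2}^2 \,\le\, 2\|W-\Omega\|_\infty^2 \|u_r\|_{L^2}^2 + 2\|\Omega\|_\infty^2\|u_\theta\|_{L^2}^2,
\]
which is \eqref{eq:pmkest} with a constant $C$ depending only on $\Omega$ (and $W$ through $W-\Omega$). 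The density statement from Proposition~\ref{prop:approximation} then extends the inequality to all $u \in X_{m,k}$.

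I do not expect any serious obstacle here: the argument is the standard variational energy estimate, and the only points requiring any care are the vanishing of boundary terms at $r=0$ and $r=\infty$ when integrating by parts (handled by the preliminary discussion of boundary behaviour of $p$) and the verification that the coefficients $\Omega$ and $W-\Omega = r\partial_r^*\Omega$ are globally bounded, which is immediate from assumption~H1.
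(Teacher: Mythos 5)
Your proof is correct and follows essentially the same route as the paper: test \eqref{eq:pmk} against $r\bar p$, integrate by parts to obtain the energy form on the left, move the derivative off $\Omega u_\theta$ onto $\bar p$, and close with Cauchy--Schwarz and Young. The only (purely cosmetic) difference is that you write the coefficient of the first source term as $W-\Omega$ whereas the paper writes the identical quantity as $(r\Omega)'$.
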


\begin{proof}
By density, it is sufficient to prove \eqref{eq:pmkest} under the 
additional assumption that $u$ is smooth and compactly supported 
in $(0,+\infty)$. To do so, we multiply both sides of \eqref{eq:pmk} 
by $r\bar p$ and integrate the result over $\Rp$. Integrating by parts 
and using H\"older's inequality, we obtain
\begin{align*}
  \|\partial_r p\|_{L^2}^2 +  \Bigl\|\frac{mp}{r}\Bigr\|_{L^2}^2 + 
  \|kp\|_{L^2}^2 \,&=\, \int_0^\infty \bar p\Bigl(\frac{2im}{r}
  (r\Omega)' u_r - 2 \partial_r^* (\Omega u_\theta)\Bigr)r\dd r \\
  \,&=\, \int_0^\infty \Bigl(\frac{2im}{r}(r\Omega)' \bar p u_r
  + 2(\partial_r \bar p) \Omega u_\theta\Bigr)r\dd r \\
  \,&\le\, 2 \|(r\Omega)'\|_{L^\infty} \Bigl\|\frac{mp}{r}\Bigr\|_{L^2}
  \|u_r\|_{L^2} + 2 \|\Omega\|_{L^\infty}\|\partial_r p\|_{L^2}
  \|u_\theta\|_{L^2}\,,
\end{align*}
hence
\[
  \|\partial_r p\|_{L^2}^2 +  \Bigl\|\frac{mp}{r}\Bigr\|_{L^2}^2 + 
  \|kp\|_{L^2}^2 \,\le\, 4 \|(r\Omega)'\|_{L^\infty}^2 \|u_r\|_{L^2}^2
  + 4 \|\Omega\|_{L^\infty}^2 \|u_\theta\|_{L^2}^2\,.
\]
Note that, by assumption H1, $\|(1{+}r)^2\Omega\|_{L^\infty} +
\|(1{+}r)^3\Omega'\|_{L^\infty} + \|(1{+}r)^4\Omega''\|_{L^\infty}
< \infty$. 
\end{proof}

\begin{rem}\label{rem:p}
The integrated pressure bound \eqref{eq:pressurest} can be established 
by an energy estimate as in the proof of Lemma~\ref{lem:ell}, 
or can be directly deduced from \eqref{eq:pmkest} using 
Parseval's theorem. 
\end{rem}

For later use, we also show that the solution $p = P_{m,k}[u]$ 
of \eqref{eq:pmk} depends continuously on the parameter $k$ 
as long as $k \neq 0$. 

\begin{lem}\label{lem:pcont}
Assume that $u_1 \in X_{m,k_1}$ and $u_2 \in X_{m,k_2}$, where
$m \in \Z$ and $k_1, k_2 \neq 0$. If we denote $p = P_{m,k_1}[u_1] 
- P_{m,k_2}[u_2]$, we have the estimate
\begin{equation}\label{eq:deltap}
  \|\partial_r p\|_{L^2}^2 +  \Bigl\|\frac{mp}{r}\Bigr\|_{L^2}^2 + 
  \|k_1 p\|_{L^2}^2 \,\le\, C \biggl(\|u_1 {-} u_2\|_{L^2}^2 + 
  \|u_2\|_{L^2}^2 \Bigl|\frac{k_1}{k_2} - \frac{k_2}{k_1}\Bigr|^2
  \biggr)\,,
\end{equation}
where the constant $C > 0$ depends only on $\Omega$. 
\end{lem}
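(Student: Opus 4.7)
The plan is to derive the equation satisfied by the difference $p = p_1 - p_2$, where $p_i = P_{m,k_i}[u_i]$, and then to apply an energy estimate parallel to that in Lemma~\ref{lem:ell}, with careful bookkeeping for the mismatch between $k_1^2$ and $k_2^2$. Subtracting the two versions of \eqref{eq:pmk}, one finds that
\[
   -\partial_r^*\partial_r p + \frac{m^2}{r^2}\,p + k_1^2\, p
   \,=\, 2im\bigl(\partial_r^*\Omega\bigr)(u_{1,r}-u_{2,r})
   - 2\partial_r^*\bigl(\Omega(u_{1,\theta}-u_{2,\theta})\bigr)
   + (k_2^2 - k_1^2)\,p_2\,.
\]
Multiplying by $r\bar p$, integrating over $\Rp$, and integrating by parts exactly as in the proof of Lemma~\ref{lem:ell} yields
\[
   \|\partial_r p\|_{L^2}^2 + \Bigl\|\frac{mp}{r}\Bigr\|_{L^2}^2 + \|k_1 p\|_{L^2}^2
   \,=\, I + J\,,
\]
where $I$ collects the two terms linear in $u_1-u_2$ and $J = (k_2^2-k_1^2)\,\langle p_2,p\rangle$ (up to complex conjugation, which is immaterial for estimates).

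The term $I$ is treated exactly as in Lemma~\ref{lem:ell}: using $\|(r\Omega)'\|_{L^\infty}+\|\Omega\|_{L^\infty} <\infty$ under assumption~H1, together with H\"older's inequality, we get
\[
   |I| \,\le\, C\,\|u_1-u_2\|_{L^2}\Bigl(\|\partial_r p\|_{L^2}+\Bigl\|\frac{mp}{r}\Bigr\|_{L^2}\Bigr)\,.
\]
The term $J$ is where the announced quantity $|k_1/k_2 - k_2/k_1|$ appears. The key observation is the algebraic identity
\[
   \frac{k_2^2-k_1^2}{k_1 k_2} \,=\, \frac{k_2}{k_1} - \frac{k_1}{k_2}\,,
\]
which allows us to rewrite
\[
   J \,=\, \Bigl(\frac{k_2}{k_1}-\frac{k_1}{k_2}\Bigr)\,\bigl\langle k_2 p_2,\, k_1 p\bigr\rangle\,.
\]
By Cauchy--Schwarz, $|J| \le |k_1/k_2 - k_2/k_1|\,\|k_2 p_2\|_{L^2}\,\|k_1 p\|_{L^2}$, and Lemma~\ref{lem:ell} applied to $p_2$ gives $\|k_2 p_2\|_{L^2}\le C\|u_2\|_{L^2}$.

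Finally, I would conclude by applying Young's inequality twice to absorb $\tfrac12\|\partial_r p\|_{L^2}^2+\tfrac12\|mp/r\|_{L^2}^2$ (from the bound on $I$) and $\tfrac12\|k_1 p\|_{L^2}^2$ (from the bound on $J$) into the left-hand side, leaving the asserted estimate \eqref{eq:deltap}. The main conceptual step is the factorization of $k_2^2-k_1^2$ through $k_1 k_2$, which is what converts the naive bound into one proportional to $|k_1/k_2 - k_2/k_1|$; once that identity is spotted, the remainder is a routine adaptation of the argument used for Lemma~\ref{lem:ell}.
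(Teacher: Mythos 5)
Your proof is correct and follows essentially the same route as the paper: derive the elliptic equation for the difference $p = p_1 - p_2$ with the extra source term $(k_2^2 - k_1^2)p_2$, test against $r\bar p$, and absorb via H\"older, Young, and the bound $\|k_2 p_2\|_{L^2} \le C\|u_2\|_{L^2}$ from Lemma~\ref{lem:ell}. The only difference is cosmetic: you make explicit the algebraic factorization $k_2^2 - k_1^2 = k_1 k_2\,(k_2/k_1 - k_1/k_2)$ that the paper leaves implicit behind ``we easily obtain.''
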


\begin{proof}
In view of \eqref{eq:pmk}, the difference  $p = p_1 - p_2 \equiv 
P_{m,k_1}[u_1] - P_{m,k_2}[u_2]$ satisfies the equation
\[
  -\partial_r^* \partial_r p +\frac{m^2}{r^2}\,p + k_1^2 p
  \,=\, \frac{2im}{r}\bigl(r \Omega\bigr)'(u_{1,r} {-} u_{2,r}) 
  - 2 \partial_r^* \bigl(\Omega\,(u_{1,\theta} {-} u_{2,\theta})\bigr)
  + (k_2^2 - k_1^2)p_2\,.
\]
As in the proof of Lemma~\ref{lem:ell}, we multiply both sides
by $r\bar p$ and we integrate over $\Rp$. Integrating by 
parts and using H\"older's inequality, we easily obtain
\[
 \|\partial_r p\|_{L^2}^2 +  \Bigl\|\frac{mp}{r}\Bigr\|_{L^2}^2 + 
 \|k_1p\|_{L^2}^2 \,\le\, C \biggl(\|u_1 {-} u_2\|_{L^2}^2 + \|k_2p_2\|_{L^2}^2
 \Bigl|\frac{k_1}{k_2} - \frac{k_2}{k_1}\Bigr|^2\biggr)\,, 
\]
where the constant $C > 0$ depends only on $\|\Omega\|_{L^\infty}$ 
and $\|(r\Omega)'\|_{L^\infty}$. As $\|k_2 p_2\|_{L^2} \le C\|u_2\|_{L^2}$ 
by \eqref{eq:pmkest}, this gives the desired result. 
\end{proof}

Finally, we derive a weighted estimate which allows us to control 
the pressure $p = P_{m,k}[u]$ in the far-field region where $r \gg 1$. 

\begin{lem}\label{lem:pressionloin}
Assume that $k \neq 0$ or $|m| \ge 2$. If $u \in X_{m,k}$ and 
$p = P_{m,k}[u]$, then 
\begin{equation}\label{eq:pressionloin}
  \|r \partial_r p\|_{L^2}^2 +  \|m p\|_{L^2}^2 + \|k r p\|_{L^2}^2 \,\le\, 
  3 \|p\|_{L^2}^2 + C \bigl(\|u_r\|_{L^2}^2 + \|u_\theta\|_{L^2}^2\bigr)\,, 
\end{equation}
where the constant $C > 0$ depends only on $\Omega$. If $|m| \ge 2$
the first term in the right-hand side can be omitted. 
\end{lem}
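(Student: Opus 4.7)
The plan is to derive \eqref{eq:pressionloin} by the same kind of weighted energy estimate that was used for Lemma~\ref{lem:ell}, except that we now multiply the pressure equation \eqref{eq:pmk} by $r^2\bar p$ (rather than $\bar p$) and integrate against $r\dd r$ over $\Rp$. By the density result of Proposition~\ref{prop:approximation}, it is enough to work with $u$ smooth and compactly supported in $(0,\infty)$, in which case $p=P_{m,k}[u]$ is smooth on $\Rp$ and, by the explicit representation formulas of Lemma~\ref{lem:rep}, has exponential decay at infinity when $k\ne 0$ or algebraic decay like $r^{-|m|}$ when $k=0$. The hypothesis ``$k\ne 0$ or $|m|\ge 2$'' is exactly what guarantees that the weighted norms in \eqref{eq:pressionloin} are finite and that all boundary contributions arising in the integrations by parts below can be neglected.

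Using the identity $\partial_r^*\partial_r p=\tfrac{1}{r}\partial_r(r\partial_r p)$, one integration by parts on the Laplacian part and a second one to handle the cross term $2\Re(r^2\bar p\,\partial_r p)=r^2\partial_r|p|^2$ reduce the real part of the resulting identity to
\begin{equation*}
 \|r\partial_r p\|_{L^2}^2 - 2\|p\|_{L^2}^2 + \|mp\|_{L^2}^2 + \|krp\|_{L^2}^2
 \,=\, \Re\!\int_0^\infty\!\bigl(2im(\partial_r^*\Omega)u_r - 2\partial_r^*(\Omega u_\theta)\bigr)r^2\bar p\cdot r\dd r\,.
\end{equation*}
For the right-hand side I would rewrite $r\,\partial_r^*\Omega=(r\Omega)'$ in the first contribution, integrate the second one by parts using that $\partial_r^*$ is formally adjoint to $-\partial_r$ with respect to $r\dd r$, and then apply Cauchy--Schwarz to bound the resulting integrals by
\[
 2\|r(r\Omega)'\|_{L^\infty}\|u_r\|_{L^2}\|mp\|_{L^2}
 + 4\|r\Omega\|_{L^\infty}\|u_\theta\|_{L^2}\|p\|_{L^2}
 + 2\|r\Omega\|_{L^\infty}\|u_\theta\|_{L^2}\|r\partial_r p\|_{L^2}\,.
\]
Both $L^\infty$ norms are finite under assumption~H1: since $r^2\Omega(r)\to\Gamma$ and $r^2 W(r)\to 0$ as $r\to\infty$, the functions $r\Omega$ and $r(r\Omega)'=r(W-\Omega)$ are both bounded on $\Rp$.

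It then only remains to apply Young's inequality. Choosing a small parameter $\eps>0$, I would absorb $\eps\|mp\|_{L^2}^2+\eps\|r\partial_r p\|_{L^2}^2$ into the left-hand side via the first and third cross terms, and bound the middle term as $4\|r\Omega\|_{L^\infty}\|u_\theta\|_{L^2}\|p\|_{L^2}\le (1-3\eps)\|p\|_{L^2}^2 + C_\eps\|u_\theta\|_{L^2}^2$. Together with the $-2\|p\|_{L^2}^2$ already present on the left, this produces a coefficient $(3-3\eps)/(1-\eps)=3$ in front of $\|p\|_{L^2}^2$ after division by $(1-\eps)$, which is exactly \eqref{eq:pressionloin}. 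The sharpening for $|m|\ge 2$ is then immediate: since $\|p\|_{L^2}^2\le\tfrac{1}{4}\|mp\|_{L^2}^2$, the $3\|p\|_{L^2}^2$ term can be absorbed into $\tfrac{3}{4}\|mp\|_{L^2}^2$ on the left. The only mild subtlety in this proof is the bookkeeping of the Young parameters needed to reach exactly the constant $3$; the multiplier $r^2\bar p$ is essentially forced by the need to generate $\|r\partial_r p\|^2$, $\|mp\|^2$ and $\|krp\|^2$ on the left, and the unavoidable side-effect of that choice is precisely the boundary-like term $-2\|p\|_{L^2}^2$, which is the reason why $\|p\|_{L^2}^2$ appears on the right.
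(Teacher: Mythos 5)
Your proof is correct and follows essentially the same route as the paper: multiply \eqref{eq:pmk} by $r^3\bar p$ (equivalently $r^2\bar p$ against $r\,\mathrm{d}r$), integrate by parts to obtain the identity $\|r\partial_r p\|_{L^2}^2 + \|mp\|_{L^2}^2 + \|krp\|_{L^2}^2 = 2\|p\|_{L^2}^2 + \Re(I_1+I_2)$, then control the source terms by Cauchy--Schwarz and Young; the paper simply fixes the Young parameters (e.g.\ $\tfrac14$) rather than keeping a variable $\eps$, and the absorption for $|m|\ge 2$ via $3\|p\|_{L^2}^2\le\tfrac34\|mp\|_{L^2}^2$ is identical.
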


\begin{proof}
We  multiply both sides of \eqref{eq:pmk} by $r^3\bar p$ and integrate 
the result over $\Rp$. Note that the integrand decays to zero 
exponentially fast if $k \neq 0$, and like $r^{1 - 2|m|}$ if $k \neq 0$, 
so that the integral converges if we assume that $k \neq 0$ or 
$|m| \ge 2$. After integrating by parts, we obtain the identity
\begin{equation}\label{eq:loin1}
   \|r \partial_r p\|_{L^2}^2 +  \|m p\|_{L^2}^2 + \|k r p\|_{L^2}^2 \,=\, 
   2 \|p\|_{L^2}^2 + \Re\bigl(I_1 + I_2\bigr)\,,
\end{equation}
where $I_1 = 2im \langle rp,(r\Omega)' u_r\rangle$ and $I_2 = 
2 \langle \partial_r (r^2 p),\Omega u_\theta\rangle =  2 \langle r 
\partial_r p,r\Omega u_\theta\rangle + 4 \langle p,r\Omega u_\theta\rangle$.
We observe that 
\begin{align*}
  |I_1| \,&\le\,  2 \|r(r\Omega)'\|_{L^\infty} \|mp\|_{L^2} \|u_r\|_{L^2} \,\le\, 
  \frac14 \|m p\|_{L^2}^2 + 4\|r(r\Omega)'\|_{L^\infty}^2 \|u_r\|_{L^2}^2\,, \\ 
  |I_2| \,&\le\, 2 \|r\Omega\|_{L^\infty} \Bigl(\|r\partial_r p\|_{L^2}  + 2 \|p\|_{L^2}
  \Bigr) \|u_\theta\|_{L^2} \,\le\, \frac14 \Bigl(\|r\partial_r p\|_{L^2}^2  
  + \|p\|_{L^2}^2 \Bigr) + 20 \|r\Omega\|_{L^\infty}^2 \|u_\theta\|_{L^2}^2\,,
\end{align*}
and replacing these estimates into \eqref{eq:loin1} we obtain 
\eqref{eq:pressionloin}. If $|m| \ge 2$, then $3\|p\|_{L^2}^2 \le \frac34
\|mp\|_{L^2}^2$, so that the first term in the right-hand side of
\eqref{eq:pressionloin} can be included in the left-hand side. 
\end{proof}

\begin{cor}\label{cor:comp1}
For any $m \in \Z$ and any $k \in \R$, the linear map $u \mapsto k P_{m,k}[u]$ 
from $X_{m,k}$ into $L^2(\Rp,r\dd r)$ is compact.  
\end{cor}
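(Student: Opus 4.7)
The case $k = 0$ is immediate since the map is identically zero, so assume $k \neq 0$ throughout. Given a bounded sequence $(u^{(n)})_n \subset X_{m,k}$, set $p^{(n)} = P_{m,k}[u^{(n)}]$. By Lemma~\ref{lem:ell}, both $\partial_r p^{(n)}$ and $kp^{(n)}$ are bounded in $L^2(\Rp, r\dd r)$; since $k$ is a fixed nonzero constant, $p^{(n)}$ itself is bounded in $L^2(\Rp, r\dd r)$. The plan is to extract a subsequence of $(kp^{(n)})_n$ converging in $L^2(\Rp, r\dd r)$ by combining a local compactness argument with uniform tail control at infinity.

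For the local step, fix $R > 0$. The bounds above mean that the restriction of $p^{(n)}$ to $[0,R]$ is bounded in the weighted Sobolev space $H^1([0,R], r\dd r)$ with norm $\|\phi\|^2 + \|\partial_r \phi\|^2$ (weighted $L^2$). Through the correspondence $\phi(r) \leftrightarrow \phi(|x|)$ between axisymmetric functions on the Euclidean disk $D_R \subset \R^2$ and functions on $[0,R]$, this weighted space embeds isometrically (up to a factor of $\sqrt{2\pi}$) into $H^1(D_R)$. Classical Rellich--Kondrachov compactness on $D_R$ then gives that bounded subsets of $H^1([0,R], r\dd r)$ are relatively compact in $L^2([0,R], r\dd r)$. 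A standard diagonal extraction over $R \in \N$ produces a subsequence, still denoted $(p^{(n)})_n$, that converges in $L^2([0,R], r\dd r)$ for every $R > 0$.

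For the tail at infinity I invoke Lemma~\ref{lem:pressionloin}, which applies because $k \neq 0$: the uniform bounds on $\|p^{(n)}\|_{L^2}$, $\|u_r^{(n)}\|_{L^2}$, and $\|u_\theta^{(n)}\|_{L^2}$ in \eqref{eq:pressionloin} yield $\|k r p^{(n)}\|_{L^2} \le C'$ uniformly in $n$. Consequently,
\[
  \int_R^\infty |kp^{(n)}|^2 \,r\dd r \,\le\, \frac{1}{R^2} \int_R^\infty |krp^{(n)}|^2 \,r\dd r \,\le\, \frac{(C')^2}{R^2},
\]
which can be made arbitrarily small uniformly in $n$ by taking $R$ large. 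Combined with the local convergence of the diagonal subsequence, this shows that $(kp^{(n)})_n$ is Cauchy in $L^2(\Rp, r\dd r)$, hence convergent, establishing the asserted compactness.

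The step I expect to be the main obstacle is the local compactness near the origin, especially when $m = 0$, since then no vanishing of $p^{(n)}$ at $r = 0$ is available and a direct Hardy-type estimate near $r = 0$ is unavailable. The identification with axisymmetric $H^1$ functions on a planar disk handles this uniformly in $m$, so that two-dimensional Rellich compactness applies directly; the remaining tail estimate then follows cleanly from Lemma~\ref{lem:pressionloin}.
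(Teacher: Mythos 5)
Your proposal is correct and takes essentially the same route as the paper: the paper combines the estimates of Lemmas~\ref{lem:ell} and~\ref{lem:pressionloin} and then applies the compactness criterion Lemma~\ref{lem:compcrit}, whose proof is precisely the axisymmetric embedding into $L^2(\R^2)$ together with Rellich's theorem and weighted tail control that you carry out inline. The only difference is that you unfold the criterion yourself (local Rellich on disks plus diagonal extraction) instead of citing the lemma.
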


\begin{proof}
We can of course assume that $k \neq 0$. If $u$ lies in the unit ball 
of $X_{m,k}$, it follows from estimates \eqref{eq:pmkest} and 
\eqref{eq:pressionloin} that $\|k \partial_r p\|_{L^2}^2 + \|k r p\|_{L^2}^2 
\le C(k,\Omega)$ for some constant $C(k,\Omega)$ independent of $u$. Applying 
Lemma~\ref{lem:compcrit}, we conclude that the map $u \mapsto kp$ is compact. 
\end{proof}

\subsection{Compactness results}\label{sec32}

The aim of this section is to complete the proof of Proposition~\ref{prop:AB}, 
by showing the compactness of the linear operator $B_{m,k}$ defined in 
\eqref{eq:ABmkdef}. In view of Corollary~\ref{cor:comp1}, which already settles 
the case of the third component $B_{m,k,z}u := -ikP_{m,k}[u]$, we are left to 
prove that the linear mappings
\begin{align*}
  u \mapsto B_{m,k,r}u \,&:=\, -\partial_r P_{m,k}[u] + 2\Omega u_\theta\,, 
  \qquad \hbox{and} \\
  u \mapsto B_{m,k,\theta}u \,&:=\, -\frac{im}{r}P_{m,k}[u] - 2(r\Omega)' u_r\,,
\end{align*}
are compact from $X_{m,k}$ to $L^2(\Rp,r\dd r)$, for any $m \in \Z$ and any 
$k \in \R$. In the sequel, to simplify the notation, we write $B_r$, $B_\theta$, 
$B_z$ instead of $B_{m,k,r}u$, $B_{m,k,\theta}u$, $B_{m,k,z}u$, respectively. 

We first treat the simple particular case where $m = 0$. 

\begin{lem}\label{lem:estimBm0}
If $m = 0$ and $u \in X_{0,k}$, then
\begin{equation}\label{eq:estimBm0}
  \|\partial_r^* B_r\|_{L^2} + \|\partial_r^* B_\theta\|_{L^2} + \|rB_r\|_{L^2} 
  + \|rB_\theta\|_{L^2} \,\le\, C(k,\Omega) \|u\|_{L^2}\,,
\end{equation}
where the constant $C(k,\Omega)$ depends only on $k$ and $\Omega$. 
\end{lem}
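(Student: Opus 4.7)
The plan is to exploit the two simplifications that occur when $m=0$: the pressure equation \eqref{eq:pmk} reduces to $-\partial_r^* \partial_r p + k^2 p = -2\partial_r^*(\Omega u_\theta)$, and the angular component of $B$ becomes the purely multiplicative operator $B_\theta = -2(r\Omega)'u_r$. Together with Lemmas~\ref{lem:ell} and \ref{lem:pressionloin}, this should make all four bounds in \eqref{eq:estimBm0} accessible by direct calculation.

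For the terms involving $\partial_r^*$, I would first apply $\partial_r^*$ to $B_r = -\partial_r p + 2\Omega u_\theta$ and substitute the pressure equation to obtain the algebraic identity $\partial_r^* B_r = -k^2 p$; Lemma~\ref{lem:ell} then controls this by $C|k|\,\|u\|_{L^2}$. For $\partial_r^* B_\theta$, I would expand via the Leibniz rule $\partial_r^*(fg) = f'g + f\partial_r^* g$ and use the incompressibility condition $\partial_r^* u_r + iku_z = 0$ (available since $m=0$) to obtain
\[
  \partial_r^* B_\theta \,=\, -2(r\Omega)''u_r + 2ik(r\Omega)'u_z\,.
\]
Since $(r\Omega)' = W-\Omega$ and $(r\Omega)'' = W'-\Omega'$ are bounded on $\Rp$ under H1, this yields $\|\partial_r^* B_\theta\|_{L^2} \le C(\Omega)(1+|k|)\|u\|_{L^2}$.

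For the weighted terms, I would bound $\|rB_r\|_{L^2} \le \|r\partial_r p\|_{L^2} + 2\|r\Omega\|_{L^\infty}\|u_\theta\|_{L^2}$; the second summand is controlled directly since $r\Omega$ is bounded by H1, while Lemma~\ref{lem:pressionloin} gives $\|r\partial_r p\|_{L^2} \le \sqrt{3}\,\|p\|_{L^2} + C\|u\|_{L^2}$, and $\|p\|_{L^2} \le C|k|^{-1}\|u\|_{L^2}$ from Lemma~\ref{lem:ell}. Finally, $\|rB_\theta\|_{L^2} = 2\|r(r\Omega)' u_r\|_{L^2}$, and boundedness of $r(r\Omega)' = rW - r\Omega$ on $\Rp$ follows from H1: since $W$ is nonincreasing and integrable against $r\dd r$, a standard comparison argument yields $rW \in L^\infty(\Rp)$, and the representation $r\Omega(r) = r^{-1}\int_0^r W(s)s\dd s$ gives the same bound for $r\Omega$.

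No serious obstacle arises here; the proof is essentially algebraic. The only point requiring attention is recognizing that, for $m=0$, the pressure equation causes $\partial_r^* B_r$ to collapse to a multiple of $p$, thereby absorbing the apparent loss of one derivative in the definition of $B_r$; the remaining estimates are standard energy manipulations combined with bookkeeping on the boundedness of various combinations of $\Omega$ and $W$ provided by H1.
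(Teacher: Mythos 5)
Your argument matches the paper's proof step for step: the identity $\partial_r^* B_r = -k^2 p$ obtained by substituting the pressure equation, the incompressibility-based rewriting $\partial_r^* B_\theta = -2(r\Omega)''u_r + 2ik(r\Omega)'u_z$, and the use of Lemmas~\ref{lem:ell} and \ref{lem:pressionloin} for the weighted bounds. The only omission is the degenerate case $k=0$: your estimate $\|p\|_{L^2}\le C|k|^{-1}\|u\|_{L^2}$ and your appeal to Lemma~\ref{lem:pressionloin} both require $k\neq 0$, so you should observe separately that when $m=k=0$ the incompressibility conditions for $u$ and for $B$ give $u_r = 0$ and then $B \equiv 0$, making \eqref{eq:estimBm0} trivial.
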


\begin{proof}
If $m = 0$ and $k = 0$, then $\partial_r^* u_r = \div u = 0$, and this 
implies that $u_r = 0$. Similarly, the incompressibility condition for the 
vector $B$ implies that $B_r = 0$, and in view of \eqref{eq:ABmkdef}
it follows that $B$ vanishes identically. Thus estimate \eqref{eq:estimBm0} 
is trivially satisfied in that case. If $m = 0$ and $k \neq 0$, we deduce 
from \eqref{eq:pmkest} and \eqref{eq:pressionloin} that
\[
  \|rB_r\|_{L^2} \,\le\, \|r\partial_r p\|_{L^2} + 2\|r\Omega\|_{L^\infty} 
  \|u_\theta\|_{L^2} \,\le\, C(k,\Omega) \bigl( \|u_r\|_{L^2} + \|u_\theta\|_{L^2}
  \bigr)\,,
\]
and 
\[
  \|rB_\theta\|_{L^2} \,=\, 2\|r(r\Omega)'u_r\|_{L^2} \,\le\, 
  2 \|r(r\Omega)'\|_{L^\infty} \|u_r\|_{L^2}\,.
\]
As for the derivatives, we observe that $\partial_r^* B_r = -\partial_r^*\partial_r p 
+ 2\partial_r^*(\Omega u_\theta) = -k^2p$ in view of \eqref{eq:pmk}, and 
therefore we deduce from \eqref{eq:pmkest} that 
\[
  \|\partial_r^* B_r\|_{L^2} \,=\, \|k^2p\|_{L^2} \,\le\, C(k,\Omega) 
  \bigl(\|u_r\|_{L^2} + \|u_\theta\|_{L^2}\bigr)\,.
\] 
Finally, as $\partial_r^* u_r + ik u_z = 0$, we have $\partial_r^* B_\theta
= -2(r\Omega)''u_r - 2(r\Omega)'\partial_r^*u_r = -2(r\Omega)''u_r
+ 2ik(r\Omega)'u_z$, and it follows that
\[
  \|\partial_r^* B_\theta\|_{L^2} \,\le\, C(k,\Omega) \bigl(\|u_r\|_{L^2} + 
  \|u_z\|_{L^2}\bigr)\,.
\]
Collecting these estimates, we arrive at \eqref{eq:estimBm0}. 
\end{proof}

When $m \neq 0$, useful estimates on the vector $B_{m,k}u$ can be deduced from an
elliptic equation satisfied by the radial component $B_r$, which also involves
the quantities $R_1, R_2$ defined by
\begin{equation}\label{eq:R1R2def}
  R_1 \,=\, 2\bigl(-(r\Omega)'' u_r + im\Omega'u_\theta + ik(r\Omega)'u_z\bigr)\,,
 \quad\hbox{and}\quad 
  R_2 \,=\, \frac{2}{r}p + 2\Omega u_\theta\,.
\end{equation}
To derive that equation, we first observe that, in view of the
definitions \eqref{eq:ABmkdef} of $B_r, B_\theta$ and of the
incompressibility condition for $u$, the following relation holds
\begin{equation}\label{eq:elimBtheta}
  \partial_r\bigl(r B_\theta\bigr) \,=\, -im \partial_r p -2 \partial_r
  \bigl(r(r\Omega)'u_r\bigr) \,=\, imB_r + r R_1\,.
\end{equation}
Next, we have the incompressibility condition for $B$, which is equivalent to
\eqref{eq:pmk}\:
\begin{equation}\label{eq:divB}
  \partial_r^* B_r + \frac{im}{r}B_\theta + ikB_z \,=\, 0\,.
\end{equation}
If we multiply both members of \eqref{eq:divB} by $r^2$ and differentiate the
resulting identity with respect to $r$, we obtain using \eqref{eq:elimBtheta}
the desired equation
\begin{equation}\label{eq:Br}
  - \partial^2_r B_r - \frac{3}{r} \partial_r B_r + \Bigl(\frac{m^2-1}{r^2} 
  + k^2\Bigr) B_r \,=\,  \frac{im}{r}R_1 + k^2 R_2\,. 
\end{equation}

If $u \in X_{m,k}$ is smooth and compactly supported in $(0,+\infty)$,
it is clear from the definition \eqref{eq:ABmkdef} that the radial
component $B_r$ satisfies exactly the same boundary conditions at $r =
0$ as the pressure derivative $\partial_r p$, and has also the same
decay properties at infinity. In particular $B_r$ decays to zero
exponentially fast as $r \to \infty$ if $k \neq 0$, and behaves like
$r^{-1-|m|}$ if $k = 0$ and $m \neq 0$. These observations also apply
to the azimuthal component $B_\theta$.

We now exploit \eqref{eq:Br} to estimate $B_r$ and $B_\theta$, starting
with the general case where $|m| \ge 2$. 

\begin{lem}\label{lem:estimBm2}
If $|m|\ge 2$ and $u \in X_{m,k}$, then 
\begin{equation}\label{eq:estimBm2}
  \|\partial_r B_r\|_{L^2} + \|\partial_r^* B_\theta\|_{L^2} + \|rB_r\|_{L^2} 
  + \|rB_\theta\|_{L^2} \,\le\, C(m,k,\Omega) \|u\|_{L^2}\,,
\end{equation}
where the constant $C(m,k,\Omega)$ depends only on $m$, $k$ and $\Omega$. 
\end{lem}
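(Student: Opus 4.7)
The plan is to apply $L^2$ energy estimates directly to the scalar elliptic equation \eqref{eq:Br} for $B_r$, using two different multipliers, and then reconstruct the azimuthal component $B_\theta$ from the algebraic identity \eqref{eq:elimBtheta} and the definition \eqref{eq:ABmkdef}. First I would multiply \eqref{eq:Br} by $\bar B_r$ and integrate with respect to the measure $r\,\dd r$, using the observation that $\partial_r^2 + \frac{3}{r}\partial_r = \frac{1}{r^3}\partial_r(r^3\partial_r\,\cdot)$. After integration by parts, the boundary terms vanish: at infinity by the decay of $B_r$ recorded just before the statement of the lemma, and at $r=0$ because the indicial exponents of \eqref{eq:Br} are $-1\pm|m|$, so that $B_r(r) = O(r^{|m|-1})$ with $|m|-1\ge 1$. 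This would yield
\[
  \|\partial_r B_r\|_{L^2}^2 + (m^2-1)\bigl\|B_r/r\bigr\|_{L^2}^2 + k^2\|B_r\|_{L^2}^2
  \,=\, \Re\Bigl\langle \tfrac{im}{r}R_1 + k^2 R_2\,,\,B_r\Bigr\rangle.
\]
The same procedure applied with multiplier $r^2\bar B_r$ would give, after an integration by parts in which the $3/r$ drift contribution cancels exactly, the weighted analogue
\[
  \|r\partial_r B_r\|_{L^2}^2 + (m^2-1)\|B_r\|_{L^2}^2 + k^2\|rB_r\|_{L^2}^2
  \,=\, \Re\Bigl\langle \tfrac{im}{r}R_1 + k^2 R_2\,,\,r^2 B_r\Bigr\rangle.
\]

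The next step is to bound the right-hand sides by $\|u\|_{L^2}^2$, which reduces to showing that $\|R_1\|_{L^2}$, $\|rR_1\|_{L^2}$, $\|R_2\|_{L^2}$ and $\|rR_2\|_{L^2}$ are all controlled by a multiple of $\|u\|_{L^2}$. For $R_1$ this is immediate from \eqref{eq:R1R2def}, since assumption H1 ensures that $\Omega'$, $(r\Omega)'$, $(r\Omega)''$ as well as their products with $r$ are all bounded on $\Rp$. For $R_2 = 2p/r + 2\Omega u_\theta$, the key quantities to control are $\|p/r\|_{L^2}$ and $\|p\|_{L^2}$: the first follows from Lemma~\ref{lem:ell} via $\|p/r\|_{L^2} = \frac{1}{|m|}\|mp/r\|_{L^2}$, while the second is obtained from Lemma~\ref{lem:pressionloin} by noting that $\|mp\|_{L^2}^2 \ge 4\|p\|_{L^2}^2$ for $|m|\ge 2$, which allows the term $3\|p\|_{L^2}^2$ on the right-hand side of \eqref{eq:pressionloin} to be absorbed into the left-hand side. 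This absorption is exactly where the hypothesis $|m|\ge 2$ enters, and I would expect it to be the main delicate point of the argument. Once these bounds are in hand, Cauchy--Schwarz and Young's inequality, combined with the factor $m^2-1 \ge 3$ in front of $\|B_r/r\|_{L^2}^2$ and $\|B_r\|_{L^2}^2$, allow the source contributions to be absorbed, and the two identities together give the desired control of $\|\partial_r B_r\|_{L^2}$ and $\|rB_r\|_{L^2}$.

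Finally, the estimates on $B_\theta$ are obtained by purely algebraic manipulations. The bound on $\|rB_\theta\|_{L^2}$ follows directly from the definition $rB_\theta = -imp - 2r(r\Omega)'u_r$, using the control of $\|p\|_{L^2}$ established above together with the boundedness of $r(r\Omega)'$. The bound on $\|\partial_r^* B_\theta\|_{L^2}$ comes from dividing the identity \eqref{eq:elimBtheta} by $r$, which yields $\partial_r^* B_\theta = \frac{im}{r}B_r + R_1$; this is then controlled by the previously obtained bound on $\|B_r/r\|_{L^2}$ and the bound on $\|R_1\|_{L^2}$. Summing the four resulting estimates produces \eqref{eq:estimBm2}.
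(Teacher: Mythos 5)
Your strategy is the same as the paper's: derive the elliptic equation \eqref{eq:Br} for $B_r$, test it against $\bar B_r$ and against $r^2\bar B_r$ with respect to $r\,\dd r$, and then recover $B_\theta$ algebraically from \eqref{eq:elimBtheta} and the definition \eqref{eq:ABmkdef}. Your two identities are correct, your bounds on $R_1$ and $R_2$ are correct (including the observation that $|m|\ge 2$ allows the $3\|p\|_{L^2}^2$ term in \eqref{eq:pressionloin} to be absorbed, so that $\|p\|_{L^2}\lesssim\|u\|_{L^2}$), the boundary-term discussion via the indicial exponents $-1\pm|m|$ is sound, and the reconstruction of $\|rB_\theta\|_{L^2}$ and $\|\partial_r^*B_\theta\|_{L^2}$ is exactly what the paper does.

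There is, however, a genuine gap in the step where you claim the two identities "together give the desired control of $\|\partial_r B_r\|_{L^2}$ and $\|rB_r\|_{L^2}$". In the weighted identity
\[
  \|r\partial_r B_r\|_{L^2}^2 + (m^2-1)\|B_r\|_{L^2}^2 + k^2\|rB_r\|_{L^2}^2
  \,=\, \Re\bigl\langle \tfrac{im}{r}R_1 + k^2 R_2,\ r^2 B_r\bigr\rangle\,,
\]
the only term that sees $\|rB_r\|_{L^2}$ is $k^2\|rB_r\|_{L^2}^2$. When $k=0$ (which is included in the hypothesis of the lemma) this term vanishes identically, the right-hand side reduces to $\Re\langle\frac{im}{r}R_1,r^2B_r\rangle = -m\Im\langle B_r,rR_1\rangle$, and the two identities give control only of $\|\partial_r B_r\|_{L^2}$, $\|B_r/r\|_{L^2}$, $\|r\partial_r B_r\|_{L^2}$ and $\|B_r\|_{L^2}$ — but not of $\|rB_r\|_{L^2}$. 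The paper closes this gap by testing \eqref{eq:Br} against a third multiplier, $r^4\bar B_r$ (with respect to $r\,\dd r$), which produces the additional identity
\[
  \|r^2\partial_r B_r\|_{L^2}^2 + (m^2-1)\|rB_r\|_{L^2}^2 \,=\, -2\Re\langle rB_r,\,r^2\partial_r B_r\rangle
  + \Re\langle r^4 B_r,\,\tfrac{im}{r}R_1\rangle\,.
\]
Absorbing the cross term $-2\Re\langle rB_r,r^2\partial_r B_r\rangle$ requires a Cauchy--Schwarz split with constants chosen so that the residual multiple of $\|rB_r\|_{L^2}^2$ is strictly smaller than $m^2-1$; this is precisely where the margin $|m|\ge 2$, i.e.\ $m^2-1\ge 3$, is used in a quantitatively essential way. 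It also requires the stronger source bound $\|r^2 R_1\|_{L^2}\le C\|u\|_{L^2}$ rather than merely $\|rR_1\|_{L^2}$. You should add this third multiplier estimate to your argument to cover $k=0$.
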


\begin{proof}
We first observe that $\|R_1\|_{L^2} + \|r^2 R_1\|_{L^2} \le C \|u\|_{L^2}$, where the
constant depends on $m$, $k$, and $\Omega$. Similarly, in view of \eqref{eq:pmkest}
and \eqref{eq:pressionloin}, we have $\|krR_2\|_{L^2} + \|kr^2 R_2\|_{L^2} \le C
\|u\|_{L^2}$. Now, we multiply \eqref{eq:Br} by $r\bar B_r$ and integrate by parts.
This leads to the identity
\begin{equation}\label{eq:eb3}
  \|\partial_r B_r\|_{L^2}^2 + (m^2-1)\Bigl\|\frac{B_r}{r}\Bigr\|_{L^2}^2 + 
  \|kB_r\|_{L^2}^2 \,=\, \Re \langle B_r,\frac{im}{r}R_1 + k^2 R_2\rangle\,.
\end{equation}
To control the right-hand side, we use the estimates
\[
  \Bigl|\langle B_r,\frac{im}{r}R_1\rangle\Bigr| \,\le\, \Bigl\|\frac{mB_r}{r}
  \Bigr\|_{L^2} \|R_1\|_{L^2} \,\le\, C(m,k,\Omega)\Bigl\|\frac{mB_r}{r}\Bigr\|_{L^2} 
  \|u\|_{L^2}\,,
\]
and
\[
  \bigl| \langle B_r,k^2 R_2\rangle\bigr| \,\le\, 2\Bigl\|\frac{B_r}{r}\Bigr\|
  \|k^2 r R_2\|_{L^2} \,\le\, C(m,k,\Omega) \Bigl\|\frac{B_r}{r}\Bigr\|
  \|u\|_{L^2}\,.
\]
Inserting these bounds into \eqref{eq:eb3} and using Young's inequality 
together with the assumption that $|m| \ge 2$, we easily obtain
\begin{equation}\label{eq:eb4}
  \|\partial_r B_r\|_{L^2}^2 + m^2 \Bigl\|\frac{B_r}{r}\Bigr\|_{L^2}^2 + 
  k^2\|B_r\|_{L^2}^2 \,\le\, C(m,k,\Omega) \|u\|_{L^2}^2\,.
\end{equation}
In exactly the same way, if we multiply \eqref{eq:Br} by $r^3\bar B_r$
and integrate by parts, we arrive at the weighted estimate
\begin{equation}\label{eq:eb5}
  \|r\partial_r B_r\|_{L^2}^2 + m^2 \|B_r\|_{L^2}^2 + k^2 \|rB_r\|_{L^2}^2 
  \,\le\, C(m,k,\Omega) \|u\|_{L^2}^2\,.
\end{equation}

When $k = 0$, estimates \eqref{eq:eb4}, \eqref{eq:eb5} remain valid but they do 
not provide the desired control on $\|rB_r\|_{L^2}$. In that case, we multiply 
\eqref{eq:Br} by $r^5\bar B_r$ to derive the additional identity
\[
  \|r^2 \partial_r B_r\|_{L^2}^2 + (m^2-1)\|rB_r\|_{L^2}^2 \,=\, -2 \Re\langle 
  rB_r , r^2\partial_r B_r\rangle + \Re\langle r^4 B_r,\frac{im}{r} R_1\rangle\,.
\]
To estimate the right-hand side, we use the following bounds
\begin{align*}
  2\bigl|\langle r B_r, r^2\partial_r B_r\rangle\bigr| \,&\le\, \frac{3}{4}
  \|r^2\partial_r B_r\|_{L^2}^2 + \frac{4}{3} \|r B_r\|_{L^2}^2\,, \\
  \bigl|\langle rB_r,imr^2R_1\rangle\bigr| \,&\le\, \| mrB_r\|_{L^2} \|r^2R_1\|_{L^2} 
  \,\le\, C(m,\Omega) \|mrB_r\|_{L^2} \|u\|_{L^2}\,.
\end{align*}
Taking into account the assumption that $|m|\ge 2$, so that $\frac43 \le 
\frac{m^2-1}{2}$, we deduce that, for $k = 0$,
\begin{equation}\label{eq:eb6}
  \|r^2\partial_r B_r\|_{L^2}^2 + m^2 \|rB_r\|_{L^2}^2 \,\le\, C(m,\Omega) 
  \|u\|_{L^2}^2\,.
\end{equation}
Combining \eqref{eq:eb4}, \eqref{eq:eb5} and \eqref{eq:eb6} (when $k=0$), we obtain 
in particular the inequality 
\begin{equation}\label{eq:eb7}
  \|\partial_r B_r\|_{L^2} + \|rB_r\|_{L^2} \,\le\, C(m,k,\Omega) \|u\|_{L^2}\,.
\end{equation}

It remains to estimate the azimuthal component $B_\theta$, which satisfies  
$\partial_r^* B_\theta = \frac{im}{r}B_r + R_1$ by \eqref{eq:elimBtheta}.
Using inequalities \eqref{eq:eb4} and \eqref{eq:pressionloin} (in
the case where $|m| \ge 2$), we easily obtain
\begin{equation}\label{eq:eb8}
  \|\partial_r^* B_\theta\|_{L^2} + \|rB_\theta\|_{L^2} \,\le\, C(m,k,\Omega) 
  \|u\|_{L^2}\,,
\end{equation}
and estimate \eqref{eq:estimBm2} follows by combining \eqref{eq:eb7} and 
\eqref{eq:eb8}.
\end{proof}

The case where $m = \pm 1$ requires a slightly different argument, because 
an essential term in the elliptic equation \eqref{eq:Br} vanishes 
when $m^2 = 1$. It can be shown that this phenomenon is related to 
the translation invariance of the Euler equation in the original, 
cartesian coordinates. 

\begin{lem}\label{lem:estimBm1}
Assume that $m = \pm 1$, $k\neq 0$ and $u \in X_{m,k}$. Then
\begin{equation}\label{eq:estimBm1}
  \|\partial_r B_r\|_{L^2} + \|\partial_r^* D\|_{L^2} + \|rB_r\|_{L^2} + 
  \|rD\|_{L^2} \,\le\, C(k,\Omega) \|u\|_{L^2}\,,
\end{equation}
where $D = B_r + imB_\theta$ and the constant $C(k,\Omega)$ depends only 
on $k$ and $\Omega$. 
\end{lem}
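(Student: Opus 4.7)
The case $|m|=1$ is subtle because the $(m^2-1)/r^2$ coefficient in the elliptic equation \eqref{eq:Br} vanishes, so the energy method of Lemma~\ref{lem:estimBm2} no longer controls $\|B_r/r\|_{L^2}$. My plan is to replace the second-order equation for $B_r$ by a \emph{first-order} ODE for the combination $D = B_r + imB_\theta$, whose good structure when $|m|=1$ reflects the translation invariance alluded to in the excerpt. As in Section~\ref{sec31}, I work first with $u$ smooth and compactly supported in $(0,+\infty)$ and then extend by density.

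The weighted bounds $\|rB_r\|_{L^2}$ and $\|rD\|_{L^2}$ are easy. From the explicit formulas \eqref{eq:ABmkdef} together with Lemma~\ref{lem:ell} and Lemma~\ref{lem:pressionloin} (which applies since $k \neq 0$), and using $\|p\|_{L^2} \le |k|^{-1}\|kp\|_{L^2} \le C(k,\Omega)\|u\|_{L^2}$ to absorb the $3\|p\|_{L^2}^2$ term on the right-hand side of \eqref{eq:pressionloin}, I get $\|rB_r\|_{L^2}+\|rB_\theta\|_{L^2}\le C(k,\Omega)\|u\|_{L^2}$, hence $\|rD\|_{L^2} \le C(k,\Omega)\|u\|_{L^2}$. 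The same inputs also yield $\|R_1\|_{L^2} + |k|\|B_z\|_{L^2} \le C(k,\Omega)\|u\|_{L^2}$.

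The core step is the derivation of a first-order equation for $D$. Combining the identity $\partial_r^* B_\theta = \frac{im}{r}B_r + R_1$ from \eqref{eq:elimBtheta} with the incompressibility \eqref{eq:divB} rewritten as $\partial_r^* B_r = -\frac{D-B_r}{r} - ikB_z$, I compute
\[
\partial_r^* D \,=\, \partial_r^* B_r + im\,\partial_r^* B_\theta \,=\, -\frac{D}{r} + \frac{1-m^2}{r}B_r -ikB_z + imR_1.
\]
When $m^2 = 1$ the obstructing term $(1-m^2)B_r/r$ disappears and $D$ satisfies $\partial_r D + \frac{2}{r}D = imR_1 - ikB_z$, that is, $\partial_r(r^2 D) = r^2(imR_1 - ikB_z)$. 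Since $D$ is bounded at the origin (one readily checks that in fact $D(0) = 0$ when $|m|=1$), integration from $0$ followed by the rescaling $s = rt$ gives the representation
\[
\frac{D(r)}{r} \,=\, \int_0^1 t^2 \bigl(imR_1(rt) - ikB_z(rt)\bigr)\dd t.
\]

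To conclude, I use Minkowski's integral inequality: the dilation $f \mapsto f(t\,\cdot\,)$ has operator norm $1/t$ on $L^2(\Rp, r\dd r)$, so
\[
\bigl\|D/r\bigr\|_{L^2} \,\le\, \int_0^1 t^2\cdot \tfrac{1}{t}\|imR_1 - ikB_z\|_{L^2}\dd t \,=\, \tfrac{1}{2}\|imR_1 - ikB_z\|_{L^2} \,\le\, C(k,\Omega)\|u\|_{L^2}.
\]
The identity $\partial_r B_r = -D/r - ikB_z$, a consequence of \eqref{eq:divB} valid for all $m$, then yields $\|\partial_r B_r\|_{L^2} \le C(k,\Omega)\|u\|_{L^2}$, and the further relation $\partial_r^* D = \partial_r B_r + imR_1$ (specific to $m^2=1$) controls $\|\partial_r^* D\|_{L^2}$ in the same way. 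The main obstacle is recognizing that the special combination $D$ reduces the elliptic problem to a tractable first-order ODE; once this observation is made, the conclusion is a straightforward Hardy-type estimate.
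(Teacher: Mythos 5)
Your proof is correct, and it takes a genuinely different route from the paper for the hard part of the lemma. The paper works with the second-order equation \eqref{eq:Br} for $B_r$ alone: since the coefficient $(m^2-1)/r^2$ vanishes when $|m|=1$, the straightforward energy multiplier $r\bar B_r$ loses control of $\|B_r/r\|_{L^2}$, and the paper instead uses the Pohozaev-type multipliers $r^2\partial_r\bar B_r$ and $r^4\partial_r\bar B_r$, obtaining (after taking real parts and integrating by parts) identities of the form $2\|\partial_r B_r\|_{L^2}^2 + k^2\|B_r\|_{L^2}^2 = -\Re\langle \partial_r B_r, imR_1 + k^2 r R_2\rangle$. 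The estimates on $D$ and $rD$ are then read off from these. You instead identify the first-order ODE $\partial_r(r^2 D) = r^2(imR_1 - ikB_z)$ that $D$ satisfies precisely when $m^2=1$ (because the obstructing term $(1-m^2)B_r/r$ cancels), solve it explicitly, and estimate $\|D/r\|_{L^2}$ directly by Minkowski's integral inequality together with the scaling $\|f(t\,\cdot\,)\|_{L^2(\Rp, r\dd r)} = t^{-1}\|f\|_{L^2(\Rp, r\dd r)}$. This gives $\|D/r\|_{L^2} \le \tfrac12\|imR_1 - ikB_z\|_{L^2}$, after which $\|\partial_r B_r\|_{L^2}$ follows from the identity $\partial_r B_r = -D/r - ikB_z$ (valid for all $m$) and $\|\partial_r^* D\|_{L^2}$ from $\partial_r^* D = \partial_r B_r + imR_1$ — the same two identities the paper uses at the end of its argument. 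Your derivation of the first-order ODE is a correct consequence of \eqref{eq:elimBtheta} and \eqref{eq:divB}, and your explicit computation via the near-origin behavior $p(r)\sim c\,I_1(|k|r)$ does indeed give $D(0)=0$ (though, as you note, mere boundedness of $D$ near $0$ would already justify the integration from $0$ for smooth compactly supported $u$). The weighted bounds on $rB_r$ and $rD$ are obtained exactly as in the paper from Lemmas~\ref{lem:ell} and~\ref{lem:pressionloin} plus $\|p\|_{L^2}\le|k|^{-1}\|kp\|_{L^2}$. What your route buys is transparency: it makes structurally explicit why the combination $D = B_r + imB_\theta$ is distinguished when $|m|=1$, turning the degenerate second-order problem into an elementary Hardy-type estimate, and it sidesteps the $R_2$-dependent bookkeeping in the paper's Pohozaev calculation. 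The constants produced are qualitatively the same.
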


\begin{proof}
We multiply both sides of \eqref{eq:Br} by $r^2\partial_r \bar B_r$,
take the real part, and integrate by parts. We obtain the identity
\[
  2\|\partial_r B_r\|_{L^2}^2 + k^2\|B_r\|_{L^2}^2 \,=\, -\Re
  \langle \partial_r B_r , im R_1 + k^2 r R_2\rangle\,,
\]
where the right-hand side is estimated as in the previous lemma.
This yields the bound
\begin{equation}\label{eq:eb9}
  \|\partial_r B_r\|_{L^2}^2 + k^2\|B_r\|_{L^2}^2 \,\le\, C(k,\Omega)\|u\|_{L^2}^2\,.
\end{equation}
In exactly the same way, multiplying \eqref{eq:Br} by $r^4\partial_r \bar B_r$,
we arrive at
\begin{equation}\label{eq:eb10}
  \|r \partial_r B_r\|_{L^2}^2 + k^2\|r B_r\|_{L^2}^2 \,\le\, C(k,\Omega)\|u\|_{L^2}^2\,.
\end{equation}
In particular, combining \eqref{eq:pmkest}, \eqref{eq:pressionloin}, \eqref{eq:eb9} 
and \eqref{eq:eb10}, we find
\begin{equation}\label{eq:eb11}
  \|D\|_{L^2} + \|rD\|_{L^2} \,\le\, C(k,\Omega)\|u\|_{L^2}\,.
\end{equation}
In addition, using the identity $\partial_r^* D = \partial_r B_r + \frac{1}{r}B_r 
+ im(\frac{im}{r}B_r + R_1) = \partial_r B_r + imR_1$, we obtain
\begin{equation}\label{eq:eb12}
  \|\partial_r^* D\|_{L^2} \,\le\, C(k,\Omega) \|u\|_{L^2}\,. 
\end{equation}
Estimate \eqref{eq:estimBm1} follows directly from \eqref{eq:eb9}--\eqref{eq:eb12}.
\end{proof}

\begin{lem}\label{lem:estimBm1k0}
Assume that $m = \pm 1$, $k = 0$, and $u \in X_{m,0}$. Then, for any $\alpha \in 
(0,1)$, 
\begin{equation}\label{eq:estimBm1k0}
  \|\partial_r B_r\|_{L^2} + \|\partial_r^* D\|_{L^2} + \|r^\alpha B_r\|_{L^2} 
  + \|r^\alpha D\|_{L^2} \,\le\, C(\alpha,\Omega) \|u\|_{L^2}\,,
\end{equation}
where $D = B_r + imB_\theta$ and the constant $C(\alpha, \Omega)$ depends only on 
$\alpha$ and $\Omega$. 
\end{lem}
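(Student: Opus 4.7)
The plan is to exploit two simplifications that occur simultaneously when $m^2 = 1$ and $k = 0$. First, equation~\eqref{eq:Br} reduces to $-\partial_r^2 B_r - (3/r)\partial_r B_r = (im/r)R_1$, and $R_1$ is morally a lower-order quantity: since $(r\Omega)''$ and $\Omega'$ decay like $r^{-3}$ at infinity under H1, one gets $\|r^\gamma R_1\|_{L^2} \le C(\gamma,\Omega)\|u\|_{L^2}$ for every $\gamma \in [0,3]$. Second, the incompressibility condition~\eqref{eq:divB} collapses to $\partial_r^* B_r + (im/r)B_\theta = 0$, and using $m^2 = 1$ one obtains the clean algebraic identities
\[
  D \,=\, -r\,\partial_r B_r \qquad \text{and} \qquad \partial_r^* D \,=\, \partial_r B_r + imR_1\,.
\]
All four quantities on the left-hand side of~\eqref{eq:estimBm1k0} thus reduce to weighted bounds on $\partial_r B_r$ and the standard $L^2$ bound on $R_1$.

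The core of the argument will be a one-parameter family of weighted energy estimates, obtained by multiplying the reduced elliptic equation by $r^{2+2\gamma}\partial_r \bar B_r$, integrating in $dr$, and taking the real part. After integration by parts---the boundary terms vanish thanks to the regularity of $B_r$ at the origin and the decay of $\partial_r B_r$ at infinity, by density of smooth compactly supported vector fields in $X_{m,0}$ and the representation formulas for the pressure---the left-hand side becomes $(\gamma - 2)\|r^\gamma\partial_r B_r\|_{L^2}^2$, and a single Cauchy--Schwarz on the right yields
\[
  (2-\gamma)\,\|r^\gamma\partial_r B_r\|_{L^2} \,\le\, \|r^\gamma R_1\|_{L^2}\,, \qquad 0 \le \gamma < 2\,.
\]
Taking $\gamma = 0$ controls $\|\partial_r B_r\|_{L^2}$, while taking $\gamma = \alpha + 1 \in (1,2)$ controls $\|r^{\alpha+1}\partial_r B_r\|_{L^2} = \|r^\alpha D\|_{L^2}$; the latter requires $\alpha < 1$ and produces a prefactor of order $1/(1-\alpha)$. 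The bound on $\|\partial_r^* D\|_{L^2}$ then follows at once from the algebraic identity above together with $\|R_1\|_{L^2} \le C(\Omega)\|u\|_{L^2}$.

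To control the remaining term $\|r^\alpha B_r\|_{L^2}$ I would invoke a one-dimensional Hardy-type inequality. Since $B_r(r)$ decays like $r^{-2}$ at infinity in this Fourier sector (the pressure itself behaves like $r^{-1}$), one may write $B_r(r) = -\int_r^\infty \partial_s B_r(s)\dd s$, and a Cauchy--Schwarz with the weights $s^{\pm(\alpha+1)}$ followed by Fubini yields
\[
  \|r^\alpha B_r\|_{L^2}^2 \,\le\, \frac{1}{2\alpha+1}\,\|r^{\alpha+1}\partial_r B_r\|_{L^2}^2\,,
\]
which combines with the previous step to close the argument. The main obstacle and the source of the restriction $\alpha < 1$ is the degenerating coefficient $(2-\gamma) = (1-\alpha)$ in the multiplier estimate; equivalently, $B_r$ decays only like $r^{-2}$ at infinity, so $r^\alpha B_r$ ceases to lie in $L^2(r\dd r)$ precisely as $\alpha \uparrow 1$, and the scheme above captures this borderline decay exactly.
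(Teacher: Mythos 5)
Your proof is correct, and it takes a genuinely different route from the paper's. The paper's argument solves the reduced equation $-r^{-3}\partial_r(r^3\partial_r B_r) = (im/r)R_1$ exactly, giving explicit integral representations of $\partial_r B_r$ and $B_r$ in terms of $R_1$, and then reads off the weighted $L^2$ bounds directly (using $\|r^{-1}R_1\|_{L^2} + \|r^3 R_1\|_{L^2} \le C\|u\|_{L^2}$). You instead keep the equation in divergence form, run a one-parameter family of energy estimates with the multiplier $r^{2+2\gamma}\partial_r\bar B_r$, and recover $\|r^\alpha B_r\|_{L^2}$ from $\|r^{\alpha+1}\partial_r B_r\|_{L^2}$ via a weighted Hardy inequality. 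This is a natural continuation of the multiplier strategy the paper itself uses for $|m|\ge 2$ and for $m=\pm1$, $k\neq 0$, so your treatment is more uniform across cases; the paper's explicit integration is, on the other hand, somewhat more transparent about where the borderline decay $B_r\sim r^{-2}$ comes from. Both routes identify the same obstruction at $\alpha = 1$: in your scheme the constant $(2-\gamma)^{-1} = (1-\alpha)^{-1}$ degenerates, and in the paper's the first integral in its representation of $B_r$ decays exactly like $r^{-2}$.

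All the intermediate steps check out: $D = -r\partial_r B_r$ and $\partial_r^* D = \partial_r B_r + imR_1$ follow from \eqref{eq:divB} and \eqref{eq:elimBtheta} with $m^2 = 1$; the integration by parts yields $(\gamma-2)\|r^\gamma\partial_r B_r\|_{L^2}^2$ on the left (boundary terms vanish at $0$ because $\partial_r B_r$ is bounded there and at $\infty$ because $\partial_r B_r = \cO(r^{-3})$, valid for smooth compactly supported $u$ and extended by density using Proposition~\ref{prop:approximation} and the continuity of $u \mapsto B_r$ from Lemma~\ref{lem:ell}); the required bound $\|r^\gamma R_1\|_{L^2}\le C\|u\|_{L^2}$ for $\gamma\in[0,3]$ follows from $\Omega'(0)=0$ and the decay $\Omega'=\cO(r^{-3})$, $(r\Omega)''=\cO(r^{-3})$ from H1; and the Hardy step with weights $s^{\pm(\alpha+1)}$ (i.e., $\beta = \alpha+1$ in the Cauchy--Schwarz) indeed produces the constant $(2\alpha+1)^{-1}$.
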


\begin{proof}
If $|m| = 1$ and $k = 0$, equation \eqref{eq:Br} reduces to 
\[
  -\frac{1}{r^3}\partial_r\left(r^3 \partial_r B_r\right) \,=\, 
  \frac{im}{r}R_1\,,
\]
which can be explicitly integrated to give
\begin{equation}\label{eq:solexpl1}
  \partial_r B_r(r) \,=\, -\frac{im}{r^3} \int_0^r s^2R_1(s)\dd s\,, 
  \qquad r > 0\,,
\end{equation}
and finally
\begin{equation}\label{eq:solexpl2}
  B_r(r) \,=\, \frac{im}{2} \int_0^r R_1(s)\frac{s^2}{r^2} \dd s + 
  \frac{im}{2} \int_r^\infty R_1(s)\dd s\,, \qquad r > 0\,.
\end{equation}
Since $|m| = 1$ and $k = 0$, it follows from \eqref{eq:R1R2def} that
$\|r^{-1}R_1\|_{L^2} + \|r^3 R_1\|_{L^2} \le C \|u\|_{L^2}$, where the constant
depends only on  $\Omega$. Using that information, it is straightforward to
deduce from the representations \eqref{eq:solexpl1} and \eqref{eq:solexpl2}
that
\[
  \|\partial_r B_r\|_{L^2} + \|r^\alpha B_r\|_{L^2} 
  \,\le\, C(\alpha,\Omega) \|u\|_{L^2}\,,
\]
for any $\alpha < 1$. Note that $r B_r \notin L^2(\R_+,r\dd r)$ in general, 
because the first term in the right-hand side of \eqref{eq:solexpl2} 
decays exactly like $r^{-2}$ as $r \to \infty$. 

On the other hand, it follows from \eqref{eq:divB} that $imB_\theta = -r\partial_rB_r 
- B_r$, which implies that $D = -r\partial_r B_r$. Moreover, as in the previous 
lemma, we have $\partial_r^* D = \partial_r B_r + imR_1$. So, using the 
estimates above on $R_1$, we easily obtain the bound $\|\partial_r^* D\|_{L^2} 
+ \|r^\alpha D\|_{L^2} \le C\|u\|_{L^2}$, which concludes the proof. 
\end{proof}

\noindent
{\bf End of the Proof of Proposition \ref{prop:AB}.} When $|m|\neq 1$, in view 
of Lemmas~\ref{lem:estimBm0} and \ref{lem:estimBm2}, the compactness of the 
maps $u \mapsto B_{m,k,r}u$ and $u \mapsto B_{m,k,\theta}u$ is a direct
consequence of Lemma~\ref{lem:compcrit} in the Appendix. When $m = \pm1$, 
Lemma~\ref{lem:compcrit}, combined with Lemma~\ref{lem:estimBm1} or 
Lemma~\ref{lem:estimBm1k0}, shows that the maps $u \mapsto B_{m,k,r}u$ and 
$u \mapsto B_{m,k,r}u + imB_{m,k,\theta}u$ are compact, and so is 
the map $u \mapsto B_{m,k,\theta}u$. 
\qed

\begin{rem}\label{rem:oldversion}
It is also possible to obtain explicit representation formulas for 
the components of the vector-valued operator $B_{m,k}$ defined in 
\eqref{eq:ABmkdef}, and to use them to prove that the map 
$u \mapsto B_{m,k}u$ is compact in $X_{m,k}$. The computations, 
however, are rather cumbersome. That approach was followed in a 
previous version of this work \cite{GS2}. 
\end{rem}
 
\section{Resolvent bounds on vertical lines}
\label{sec4}

This final section is entirely devoted to the proof of 
Proposition~\ref{prop:main}. Let $a \neq 0$ be a nonzero real number. 
For any value of the angular Fourier mode $m \in \Z$, of the vertical wave 
number $k \in \R$, and of the spectral parameter $s \in \C$ with $\Re(s) = a$, 
we consider the resolvent equation $(s - L_{m,k})u = f$, which by definition
\eqref{eq:Lmkdef} is equivalent to the system
\begin{equation}\label{eq:eigsysf}
  \begin{array}{l}
  \gamma(r) u_r - 2\Omega(r)u_\theta \,=\, -\partial_r p + f_r\,, \\[1mm]
  \gamma(r) u_\theta  + W(r)u_r \,=\, -\frac{im}{r} p+f_\theta\,, \\[1mm]
  \gamma(r) u_z  \,=\, -ik p+f_z\,, \end{array} 
\end{equation}
where $\gamma(r) = s + im \Omega(r)$ and $p = P_{m,k}[u]$ is the 
solution of \eqref{eq:pmk} given by Lemma~\ref{lem:rep}. 
We recall that $u,f$ are divergence-free\: 
\begin{equation}\label{eq:incompuf}
 \partial_r^* u_r + \frac{im}{r} u_\theta + ik u_z \,=\, 0\,, \qquad
 \partial_r^* f_r + \frac{im}{r} f_\theta + ik f_z \,=\, 0\,.
\end{equation}
Our goal is to show that, given any $f \in X_{m,k}$, the (unique) 
solution $u \in X_{m,k}$ of \eqref{eq:eigsysf} satisfies 
$\|u\|_{L^2} \le C \|f\|_{L^2}$, where the constant $C > 0$ 
depends only on the spectral abscissa $a$ and on the angular 
velocity profile $\Omega$. In particular, the constant $C$ is 
independent of $m$, $k$, and $s$ provided $\Re(s) = a$. 

\begin{rem}\label{rem:symm}
It is interesting to observe how the resolvent system 
\eqref{eq:eigsysf}, \eqref{eq:incompuf} is transformed 
under the action of the following isometries\:
\[
\begin{array}{ll}
  \cI_1 \,:\, X_{m,k} \to X_{-m,k}\,, \quad & u \,\mapsto\, \tilde u \,:=\, 
  (u_r,-u_\theta,u_z)\,, \\
  \cI_2 \,:\, X_{m,k} \to X_{m,-k}\,, \quad & u \,\mapsto\, \hat u \,:=\, 
  (u_r,u_\theta,-u_z)\,, \\
  \cI_3 \,:\, X_{m,k} \to X_{-m,-k}\,, \quad & u \,\mapsto\, \bar u \,:=\, 
  (\bar u_r,\bar u_\theta,\bar u_z)\,,
 \end{array}
\]
where (as usual) $\bar u$ denotes the complex conjugate of $u$. 
If $u,f \in X_{m,k}$ and $s \in \C$, the resolvent equation 
$(s - L_{m,k})u = f$ is equivalent to any of the following 
three relations\:
\[
  \bigl(s + L_{-m,k}\bigr)\tilde u \,=\, \tilde f\,, \qquad
  \bigl(s - L_{m,-k}\bigr)\hat u \,=\, \hat f\,, \qquad
  \bigl(\bar s - L_{-m,-k}\bigr)\bar u \,=\, \bar f\,.
\]
This implies in particular that the spectrum of the operator $L_{m,k}$ 
in $X_{m,k}$ satisfies 
\begin{equation}\label{eq:symetriespectre}
  \sigma(L_{m,k}) \,=\, \sigma(L_{m,-k}) \,=\, -\sigma(L_{-m,k})\,,
  \qquad\hbox{and}\quad \sigma(L_{m,k}) \,=\, -\overline{\sigma(L_{m,k})}\,.
\end{equation}
As the spectrum $\sigma(L_{m,k})$ is symmetric with respect to the 
imaginary axis, due to the last relation in \eqref{eq:symetriespectre}, 
we can assume in what follows that the spectral abscissa $a$ is positive. 
Also, thanks to the first two relations, we can suppose without loss of 
generality that $m \in \N$ and $k \ge 0$. 
\end{rem}

\subsection{The scalar resolvent equation}\label{sec41}

A key ingredient in the proof of Proposition~\ref{prop:main} is the 
observation that the resolvent system \eqref{eq:eigsysf} is equivalent 
to a second order differential equation for the radial velocity $u_r$. 

\begin{lem}\label{lem:resODE}
Assume that $(k,m) \neq (0,0)$. If $u \in X_{m,k}$ is the solution of 
the resolvent equation \eqref{eq:eigsysf} for some $f \in X_{m,k}$, the 
radial velocity $u_r$ satisfies, for all $r > 0$, 
\begin{equation}\label{eq:eigscalarf}
  -\partial_r\bigl(\cA(r)\partial_r^* u_r\bigr) + \left(1 + \frac{k^2}{\gamma^2}
  \,\cA(r)\Phi(r) + \frac{imr}{\gamma}\partial_r\Bigl(\frac{W(r)}{m^2+k^2r^2}
  \Bigr)\right) u_r \,=\, \cF(r)\,,
\end{equation}
where  $\cA(r)\,=\,r^2/(m^2 + k^2 r^2)$ and 
\begin{equation}\label{eq:cFdef}
  \cF(r) \,=\, \frac{1}{\gamma}f_r + \cA\Bigl( \frac{2ik\Omega}{\gamma^2} 
   + \frac{2km}{\gamma}\frac{1}{m^2+k^2r^2}\Bigr)\bigl( -ikf_\theta + 
    \frac{im}{r}f_z\bigr) + \frac{im}{\gamma r}\cA\partial_r^* f_\theta 
    + \frac{ik}{\gamma} \cA \partial_r f_z\,.
\end{equation}
In addition, the azimuthal and vertical velocities are expressed in terms of 
$u_r$ by
\begin{align}\label{eq:uthetaexp}
  u_\theta \,&=\, \frac{im\cA}{r} \partial_r^*u_r -\frac{k^2\cA}{\gamma}\big(
  W u_r -f_\theta\big) - \frac{mk\cA}{\gamma r} f_z\,, \\ \label{eq:uzexp}
  u_z \,&=\, ik\cA \partial_r^*u_r +\frac{mk\cA}{\gamma r}
  \big( Wu_r-f_\theta\big) +\frac{m^2\cA}{\gamma r^2} f_z\,. 
\end{align}
\end{lem}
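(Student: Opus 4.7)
The idea is to treat \eqref{eq:eigsysf} as a mixed algebraic/differential system whose ``unknowns'' are $u_r, u_\theta, u_z$ and $p$, and to eliminate all but $u_r$. The second and third components of \eqref{eq:eigsysf} are already algebraic in $u_\theta, u_z$, so I would first solve them:
\[
u_\theta \,=\, \frac{1}{\gamma}\Bigl(-\tfrac{im}{r}p - Wu_r + f_\theta\Bigr)\,, \qquad
u_z \,=\, \frac{1}{\gamma}\bigl(-ikp + f_z\bigr)\,.
\]
Substituting these into the divergence-free condition \eqref{eq:incompuf} for $u$ and multiplying the resulting identity by $\gamma$ yields a purely algebraic relation with no derivative of $p$. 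Since the coefficient of $p$ is exactly $m^2/r^2 + k^2 = 1/\cA$, we can solve:
\[
p \,=\, -\cA\gamma\,\partial_r^* u_r + \tfrac{imW\cA}{r}\,u_r - \tfrac{im\cA}{r}f_\theta - ik\cA f_z\,.
\]
Plugging this $p$ back into the formulas for $u_\theta, u_z$ and simplifying with the elementary identities $1 - m^2\cA/r^2 = k^2\cA$ and $1 - k^2\cA = m^2\cA/r^2$ produces \eqref{eq:uthetaexp} and \eqref{eq:uzexp} at once.

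\textbf{The scalar ODE.} I would then substitute all the above expressions into the first equation of \eqref{eq:eigsysf}, $\gamma u_r - 2\Omega u_\theta = -\partial_r p + f_r$, and divide by $\gamma$. The second-order piece $-\partial_r(\cA\partial_r^* u_r)$ emerges from differentiating the $-\cA\gamma\partial_r^* u_r$ term in $p$; the ``spurious'' derivative on $\gamma$ produces an extra contribution $\frac{\gamma'}{\gamma}\cA\partial_r^* u_r = \frac{im\Omega'\cA}{\gamma}\partial_r^* u_r$ which combines with the $\partial_r^* u_r$ piece of $-\frac{2\Omega}{\gamma}u_\theta$, namely $-\frac{2\Omega im\cA}{\gamma r}\partial_r^* u_r$, to yield $-\frac{imW\cA}{\gamma r}\partial_r^* u_r$ thanks to $W = r\Omega' + 2\Omega$. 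This merges in turn with $\gamma^{-1}\partial_r(\tfrac{imW\cA}{r}u_r)$: the $\partial_r u_r$ terms cancel exactly, and the remaining coefficient of $u_r$ collapses to $\frac{im}{\gamma}\bigl[(W\cA/r)' - W\cA/r^2\bigr] = \frac{imr}{\gamma}\partial_r(W/(m^2+k^2r^2))$. The $u_\theta$ substitution also contributes $\frac{2\Omega k^2\cA W}{\gamma^2}u_r = \frac{k^2\cA\Phi}{\gamma^2}u_r$ to the coefficient of $u_r$ via $\Phi = 2\Omega W$. Adding the trivial $u_r$ term coming from the left-hand side gives exactly the bracketed coefficient in \eqref{eq:eigscalarf}.

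\textbf{The source term.} All remaining terms bundle into $\cF$. Collecting them as they appear, one naturally obtains
\[
\frac{f_r}{\gamma} + \frac{1}{\gamma}\partial_r\!\Bigl(\tfrac{im\cA}{r}f_\theta\Bigr) + \frac{1}{\gamma}\partial_r(ik\cA f_z) + \frac{2\Omega k^2\cA}{\gamma^2}f_\theta - \frac{2\Omega mk\cA}{\gamma^2 r}f_z\,.
\]
The main obstacle is to match this with the form \eqref{eq:cFdef}: the last two terms correspond to the part of the stated $\cF$ involving the factor $2ik\Omega/\gamma^2$ acting on $-ikf_\theta + \tfrac{im}{r}f_z$ (which is, incidentally, the radial Fourier component of $\mathrm{curl}\,f$), while expanding the two derivative terms using $\partial_r f_\theta = \partial_r^* f_\theta - f_\theta/r$ produces $\tfrac{im}{\gamma r}\cA\partial_r^* f_\theta + \tfrac{ik}{\gamma}\cA\partial_r f_z$ plus a remainder that I would verify equals the piece with prefactor $\frac{2km}{\gamma(m^2+k^2r^2)}$ by the explicit computation $\cA' = 2m^2r/(m^2+k^2r^2)^2$ and $(\cA/r)' - \cA/r^2 = -2k^2r^2/(m^2+k^2r^2)^2$. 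Once these algebraic identities are in place, every term matches and the lemma is proved.
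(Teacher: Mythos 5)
Your argument is correct, and it takes a genuinely different route from the paper. The paper eliminates the pressure pairwise (differencing equations two and three, then one and three), obtains the formulas \eqref{eq:uthetaexp}, \eqref{eq:uzexp} along the way, and finally combines two first-order relations by applying the operator $\partial_r + \tfrac{imW}{\gamma r}$ to one of them; that combination step degenerates when $k=0$, so the paper must redo the elimination separately in that case. You instead solve the algebraic $3{\times}3$ system (the last two lines of \eqref{eq:eigsysf} plus the incompressibility constraint) for the ``slave'' unknowns $u_\theta$, $u_z$ and $p$ in terms of $u_r$ and $f$; the key observation that the coefficient of $p$ is exactly $m^2/r^2 + k^2 = 1/\cA$ makes this clean, and the resulting substitution into the first equation is completely uniform in $k$, so you get the $k=0$ case for free. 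This is a real simplification over the paper's derivation. I verified the algebraic identities you invoke ($1-m^2\cA/r^2 = k^2\cA$, $1-k^2\cA = m^2\cA/r^2$, $\cA' = 2m^2r/(m^2+k^2r^2)^2$, $(\cA/r)' - \cA/r^2 = -2k^2r^2/(m^2+k^2r^2)^2$), and the claimed coefficient of $u_r$ and the matching of $\cF$ with \eqref{eq:cFdef} all check out.

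One small sign slip in the exposition: since $p$ contains the term $-\cA\gamma\,\partial_r^* u_r$, the ``spurious'' derivative of $\gamma$ in $\gamma^{-1}\partial_r p$ contributes $-\tfrac{\gamma'}{\gamma}\cA\,\partial_r^* u_r = -\tfrac{im\Omega'\cA}{\gamma}\partial_r^* u_r$, not $+\tfrac{\gamma'}{\gamma}\cA\,\partial_r^* u_r$. With the correct sign, the sum with the $-\tfrac{2im\Omega\cA}{\gamma r}\partial_r^* u_r$ piece from $-\tfrac{2\Omega}{\gamma}u_\theta$ is $-\tfrac{im\cA}{\gamma}\bigl(\Omega' + \tfrac{2\Omega}{r}\bigr)\partial_r^* u_r = -\tfrac{imW\cA}{\gamma r}\partial_r^* u_r$, which is the value you state and use afterwards; so the final result is unaffected, but the intermediate sign should be corrected.
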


\begin{proof}
If we eliminate the  pressure $p$ from the last two lines in \eqref{eq:eigsysf}, 
we obtain
\begin{equation}\label{eq:ODE1}
  k W u_r + k\gamma u_\theta - \frac{\gamma m}{r} u_z \,=\, k f_\theta - 
  \frac{m}{r}f_z\,.
\end{equation}
This first relation can be combined with the incompressibility condition 
in \eqref{eq:incompuf} to eliminate the azimuthal velocity $u_\theta$. This gives
\begin{equation}\label{eq:ODE2}
  k\Bigl(\partial_r^* - \frac{imW}{\gamma r}\Bigr)u_r + i \Bigl(k^2 + \frac{m^2}{r^2}
  \Bigr) u_z \,=\, g_1 \,:=\, \frac{im^2}{\gamma r^2}f_z 
  - \frac{imk}{\gamma r}f_\theta\,, 
\end{equation}
which is \eqref{eq:uzexp}. As is easily verified, if in the previous
step we eliminate the vertical velocity $u_z$ from \eqref{eq:ODE1} and
\eqref{eq:incompuf}, we arrive at \eqref{eq:uthetaexp} instead of
\eqref{eq:uzexp}.

Alternatively, we can eliminate the pressure from the first and the
last line in \eqref{eq:eigsysf}.  This gives the second relation
\begin{equation}\label{eq:ODE3}
  ik \gamma u_r -2ik\Omega u_\theta - \partial_r (\gamma u_z) \,=\, ik f_r 
  -\partial_r f_z\,, 
\end{equation}
which can in turn be combined with \eqref{eq:ODE1} to eliminate the azimuthal 
velocity $u_\theta$. Using the relations $\gamma' = im\Omega'$ and 
$W = r\Omega' + 2\Omega$, we obtain in this way
\begin{equation}\label{eq:ODE4}
  \gamma^2 \Bigl(\partial_r + \frac{imW}{\gamma r}\Bigr)u_z -ik \bigl(\gamma^2 
  + \Phi\bigr)u_r \,=\, g_2 \,:=\, 2i\Omega \Bigl(\frac{m}{r}f_z - kf_\theta \Bigr) 
  + \gamma\Bigl(\partial_r f_z -ikf_r\Bigr)\,,
\end{equation}
where $\Phi = 2 \Omega W$ is the Rayleigh function. 

Now, we multiply the equality \eqref{eq:ODE2} by
$\cA = r^2/(m^2 + k^2 r^2)$ and apply the differential operator
$\partial_r + \frac{imW}{{\gamma r}}$ to both members of the resulting
expression. In view of \eqref{eq:ODE4}, we find
\begin{equation}\label{eq:ODE5}
   k \Bigl(\partial_r + \frac{imW}{\gamma r}\Bigr)\cA\Bigl(\partial_r^* - 
   \frac{imW}{\gamma r}\Bigr)u_r - k \Bigl(1 + \frac{\Phi}{\gamma^2}\Bigr)u_r
   \,=\, \Bigl(\partial_r + \frac{imW}{\gamma r}\Bigr)\cA g_1 - \frac{i}{\gamma^2}
   g_2\,.
\end{equation}
If $k \neq 0$, this equation is equivalent to \eqref{eq:eigscalarf}, as is easily 
verified by expanding the expressions in both sides of \eqref{eq:ODE5} and 
performing elementary simplifications. In the particular case where $k = 0$ (and
$m \neq 0$), equation \eqref{eq:eigscalarf} still holds but the derivation above
is not valid anymore. Instead, one must eliminate the pressure $p$ from 
the first two lines in \eqref{eq:eigsysf}, and then express the azimuthal 
velocity $u_\theta$ using the incompressibility condition. The details are 
left to the reader. 
\end{proof}

\begin{rem}\label{rem:obvious}
If $f = 0$, then $\cF = 0$ and Eq.~\eqref{eq:eigscalarf} reduces to the eigenvalue 
equation \eqref{eq:eigscalar}. 
\end{rem}

\begin{cor}\label{cor:uthetaz}
Under the assumptions of Lemma~\ref{lem:resODE}, we have the estimate
\begin{equation}\label{eq:uthetaz}
  \max\bigl\{\|u_\theta\|_{L^2}, \|u_z\|_{L^2}\bigr\} \,\le\, \|\cA^\frac12 \partial_r^* 
   u_r\|_{L^2} + \frac{1}{a}\Bigl(\|W\|_{L^\infty}\|u_r\|_{L^2} + \|f_\theta\|_{L^2}+ 
   \|f_z\|_{L^2}\Bigr)\,.
   \end{equation}
 \end{cor}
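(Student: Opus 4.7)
The plan is to derive the estimate pointwise in $r$ from the explicit representations \eqref{eq:uthetaexp}, \eqref{eq:uzexp}, and then integrate. The key algebraic observation is that the weight $\cA(r) = r^2/(m^2+k^2r^2)$ satisfies the fundamental identity
\[
  \frac{m^2}{r^2}\cA(r) + k^2 \cA(r) \,=\, 1\,,
\]
which expresses the fact that the two unit coefficients $m\cA^{1/2}/r$ and $k\cA^{1/2}$ form an orthonormal pair. In particular each of them is bounded in absolute value by $1$ for every $r>0$.

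I first bound $u_\theta$. Write the three terms in \eqref{eq:uthetaexp} as follows:
\[
  u_\theta \,=\, \underbrace{\frac{im\cA^{1/2}}{r}}_{\text{mod. }\le 1}
   \bigl(\cA^{1/2}\partial_r^* u_r\bigr)
   \,-\, \underbrace{\frac{k^2\cA}{\gamma}}_{\text{mod. }\le 1/a}
   \bigl(W u_r - f_\theta\bigr)
   \,-\, \underbrace{\frac{mk\cA}{\gamma r}}_{\text{mod. }\le 1/a} f_z\,.
\]
For the coefficient of $W u_r - f_\theta$ I use $k^2\cA \le 1$ together with $|\gamma(r)| \ge \Re(\gamma) = a > 0$, which is valid since $\Omega$ is real-valued (and we may assume $a>0$ by Remark~\ref{rem:symm}). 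For the coefficient of $f_z$ I factor $\frac{mk\cA}{\gamma r} = \frac{1}{\gamma} \cdot (m\cA^{1/2}/r)(k\cA^{1/2})$ and use that each of the last two factors is of modulus at most $1$. Taking modulus and using the triangle inequality gives
\[
  |u_\theta(r)| \,\le\, |\cA^{1/2}\partial_r^* u_r| \,+\,
  \frac{1}{a}\bigl(|W u_r| + |f_\theta| + |f_z|\bigr)\,,
\]
and the $L^2$ estimate for $u_\theta$ follows after taking the $L^2$ norm and estimating $\|W u_r\|_{L^2} \le \|W\|_{L^\infty}\|u_r\|_{L^2}$.

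The bound on $u_z$ is entirely parallel: in \eqref{eq:uzexp} the first prefactor is $k\cA^{1/2}$ (modulus $\le 1$) multiplying $\cA^{1/2}\partial_r^* u_r$, the second is $\frac{mk\cA}{\gamma r}$ (modulus $\le 1/a$) multiplying $W u_r - f_\theta$, and the third is $\frac{m^2 \cA}{\gamma r^2}$, which again has modulus at most $1/a$ since $m^2\cA/r^2 \le 1$. The same argument yields the same bound for $\|u_z\|_{L^2}$, and taking the maximum of the two estimates gives \eqref{eq:uthetaz}. There is no real obstacle here: the whole argument rests on the single identity $\cA(m^2/r^2 + k^2)=1$ together with the trivial lower bound $|\gamma| \ge a$.
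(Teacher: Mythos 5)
Your proof is correct and is essentially the paper's own argument made fully explicit: the paper simply cites the bounds $|\gamma(r)|\ge a$ and $0<\cA(r)\le\min\{1/k^2,\,r^2/m^2\}$ (which is the same information as your identity $\cA\,(m^2/r^2+k^2)=1$) and states that \eqref{eq:uthetaz} follows immediately from \eqref{eq:uthetaexp}, \eqref{eq:uzexp}. You spell out the pointwise coefficient bounds and the triangle inequality, which is exactly the intended reasoning.
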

 
 \begin{proof}
 As $|\gamma(r)| \ge \Re(s) = a$ and  
\begin{equation}\label{eq:cAbounds}
   0 \,<\, \cA(r) \,\le\, \min\Bigl\{\frac{1}{k^2}, \frac{r^2}{m^2}\Bigr\}\,,
 \end{equation}
 estimate \eqref{eq:uthetaz} follows immediately from the representations 
 \eqref{eq:uthetaexp}, \eqref{eq:uzexp}. 
 \end{proof}

\subsection{Explicit resolvent estimates in particular cases}
\label{sec42}

We first establish the resolvent bound in the relatively simple case 
where $m = 0$, which corresponds to axisymmetric perturbations of the 
columnar vortex. 

\begin{lem}\label{lem:axi}
Assume that $m = 0$. For any $f \in X_{0,k}$, the solution 
$u \in X_{0,k}$ of \eqref{eq:eigsysf} satisfies
\begin{equation}\label{eq:axibound}
  \|u\|_{L^2} \,\le\, C_0\Bigl(\frac{1}{a} + \frac{1}{a^4}\Bigr)\|f\|_{L^2}\,,
\end{equation}
where the constant $C_0 > 0$ depends only on $\Omega$. 
\end{lem}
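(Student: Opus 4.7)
I treat separately the degenerate case $k = 0$ and the generic case $k \neq 0$, exploiting Rayleigh's criterion $\Phi \ge 0$ (Assumption H1) through the scalar resolvent equation of Lemma~\ref{lem:resODE} in the latter. When $m = k = 0$, the incompressibility condition in $X_{0,0}$ forces $\partial_r^* u_r = 0$, which together with the $L^2(r\dd r)$ integrability imposes $u_r \equiv 0$. The remaining lines of \eqref{eq:eigsysf} reduce to $s u_\theta = f_\theta$ and $s u_z = f_z$, so $\|u\|_{L^2} \le (1/a)\|f\|_{L^2}$, which is compatible with \eqref{eq:axibound}.

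For $k \neq 0$ (still with $m = 0$), Lemma~\ref{lem:resODE} specializes, with $\cA = 1/k^2$, to the scalar equation
\[
-\frac{1}{k^2}\,\partial_r\partial_r^* u_r + \Bigl(1 + \frac{\Phi(r)}{s^2}\Bigr)u_r \,=\, \frac{f_r}{s} + \frac{2\Omega}{s^2}\,f_\theta + \frac{i}{sk}\,\partial_r f_z\,.
\]
I would multiply by $s^2 \bar u_r\,r$, integrate over $\Rp$, and integrate by parts both in the first term on the left (using the decay and regularity properties of $u_r$ discussed in Section~\ref{sec3} to kill boundary contributions) and in the $\partial_r f_z$ term on the right. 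Then I multiply the resulting identity by $\bar s$ and take the real part. Using $\Re(\bar s\, s^2) = a|s|^2$, $\Re(\bar s) = a$, and the positivity of $\Phi$, this yields
\[
a|s|^2 \|u_r\|_{L^2}^2 + \frac{a|s|^2}{k^2}\,\|\partial_r^* u_r\|_{L^2}^2 \,\le\, |s|^2\,\|f_r\|_{L^2}\|u_r\|_{L^2} + 2|s|\,\|\Omega\|_{L^\infty}\,\|f_\theta\|_{L^2}\|u_r\|_{L^2} + \frac{|s|^2}{|k|}\,\|f_z\|_{L^2}\,\|\partial_r^* u_r\|_{L^2}\,.
\]

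The last term on the right-hand side is absorbed into the left-hand side via Young's inequality (at the cost of a factor $1/a$), after which I divide by $|s|^2$ and use $|s|\ge a$. A second application of Young's inequality to the remaining $\|f\|\|u_r\|$ terms then delivers
\[
\|u_r\|_{L^2} + \frac{1}{|k|}\,\|\partial_r^* u_r\|_{L^2} \,\le\, C\Bigl(\tfrac{1}{a} + \tfrac{1}{a^2}\Bigr)\|f\|_{L^2}\,.
\]
The incompressibility condition gives $u_z = (i/k)\,\partial_r^* u_r$, so $\|u_z\|_{L^2}$ inherits the same bound, while the second line of \eqref{eq:eigsysf} gives $u_\theta = (f_\theta - W u_r)/s$, hence $\|u_\theta\|_{L^2} \le C(1/a + 1/a^3)\|f\|_{L^2}$. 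Summing the three contributions yields the claim, since $1/a^3 \le 1/a + 1/a^4$ for every $a > 0$. The only real hurdle in this plan is the careful bookkeeping of the powers of $1/a$ through the successive uses of Young's inequality; no conceptual obstacle arises, since Rayleigh's axisymmetric criterion makes the quadratic form $s^2\|u_r\|^2 + \langle\Phi u_r, u_r\rangle$ coercive after rotation by $\bar s$ in the complex plane.
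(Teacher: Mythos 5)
Your proof is correct and follows essentially the same route as the paper's: in the case $k\neq 0$, you specialize the scalar resolvent equation of Lemma~\ref{lem:resODE} to $m=0$, test it against $u_r$ with a complex multiplier whose real part rotates the $s^2$-coefficient of $\Phi$ into something nonnegative (your $\bar s s^2$ is the paper's $s$ times the positive factor $|s|^2/k^2$, so the resulting identity is literally the same), and conclude via Young's inequality and Rayleigh's criterion $\Phi\ge 0$. The only cosmetic deviation is in the final step: you bound $\|u_z\|_{L^2}$ directly from the incompressibility constraint $u_z = (i/k)\partial_r^* u_r$ together with the $\mathcal{A}^{1/2}$-weighted $H^1$ bound on $u_r$, whereas the paper uses the third line of \eqref{eq:eigsysf} and the pressure estimate \eqref{eq:pmkest}; your route is arguably a touch cleaner and yields the slightly sharper power $1/a^3$ in place of $1/a^4$, which is of course still consistent with the stated bound.
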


\begin{proof}
When $k = 0$, the incompressibility condition \eqref{eq:incompuf}
implies that $u_r = 0$, and since $\gamma(r) = s$ we deduce 
from the last two lines in \eqref{eq:eigsysf} that 
$u_\theta = f_\theta/s$ and $u_z = f_z/s$. As $|s| \ge \Re(s) = a$, 
we thus have $\|u\|_{L^2} \le \|f\|_{L^2}/a$, which is the desired 
conclusion. 

If $k \neq 0$, we assume without loss of generality that $k > 0$. 
Since $m = 0$, equation \eqref{eq:eigscalarf} satisfied by the radial velocity 
$u_r$ reduces to
\[
  -\partial_r \partial_r^* u_r + k^2\Bigl(1 + \frac{\Phi(r)}{s^2}\Bigr)u_r
  \,=\, \frac{k^2}{s}f_r + \frac{2k^2\Omega(r)}{s^2}f_\theta + 
  \frac{ik}{s}\partial_r f_z\,.
\]
We multiply both sides by $s r \bar u_r $ and integrate the resulting
equality over $\Rp$. After taking the real part, we obtain the 
identity
\[
  a\int_0^\infty \biggl\{|\partial_r^* u_r|^2 + k^2\Bigl(1 + 
  \frac{\Phi(r)}{|s|^2}\Bigr)|u_r|^2\biggr\}r\dd r \,=\, 
  \Re \int_0^\infty \bar u_r \Bigl(k^2f_r + \frac{2k^2\Omega(r)}{s}f_\theta + 
  ik\partial_r f_z\Bigr)r\dd r\,.
\]
As $\Phi(r) \ge 0$ by assumption H1, we easily deduce that
\[
  a \Bigl(\|\partial_r^* u_r\|_{L^2}^2 + k^2 \|u_r\|_{L^2}^2\Bigr)
  \,\le\, k^2 \|u_r\|_{L^2} \Bigl(\|f_r\|_{L^2} + \frac{2\|\Omega\|_{L^\infty}}{a}
  \|f_\theta\|_{L^2}\Bigr) + k \|\partial_r^* u_r\|_{L^2} \|f_z\|_{L^2}\,,
\]
and applying Young's inequality we obtain
\begin{equation}\label{eq:ur0}
  \frac{1}{k^2}\,\|\partial_r^* u_r\|_{L^2}^2 + \|u_r\|_{L^2}^2
  \,\le\, \frac{C}{a^2} \bigl(\|f_r\|_{L^2}^2 + \|f_z\|_{L^2}^2\bigr) 
  + \frac{C}{a^4}\|f_\theta\|_{L^2}^2\,, 
\end{equation}
where the constant $C > 0$ depends only on $\Omega$. 

With estimate \eqref{eq:ur0} at hand, we deduce from the second 
line in \eqref{eq:eigsysf} that
 \begin{equation}\label{eq:utheta0}
  \|u_\theta\|_{L^2} \,\le\, \frac{1}{|s|}\Bigl(\|W\|_{L^\infty} \|u_r\|_{L^2}
  + \|f_\theta\|_{L^2}\Bigr) \,\le\, C\Bigl(\frac{1}{a} 
  + \frac{1}{a^3}\Bigr)\|f\|_{L^2}\,.
\end{equation}
Similarly, using the third line in \eqref{eq:eigsysf} and 
estimate \eqref{eq:pmkest} for the pressure, we obtain
\begin{equation}\label{eq:uz0}
  \|u_z\|_{L^2} \,\le\, \frac{1}{|s|}\bigl(\|kp\|_{L^2} + \|f_z\|_{L^2}
  \bigr) \,\le\, \frac{C}{a}\bigl(\|u_r\|_{L^2} + \|u_\theta\|_{L^2}
 + \|f_z\|_{L^2}\bigr) \,\le\,  C\Bigl(\frac{1}{a} + \frac{1}{a^4}\Bigr)
 \|f\|_{L^2}\,.
\end{equation}
Combining \eqref{eq:ur0}, \eqref{eq:utheta0}, and \eqref{eq:uz0}, 
we arrive at \eqref{eq:axibound}.    
\end{proof}

In the rest of this section, we consider the more difficult case
where $m \neq 0$. In that situation, given any $s \in \C$ with 
$\Re(s) = a$, there exists a unique $b \in \R$ such that
\begin{equation}\label{eq:sdef}
  s \,=\, a - imb\,, \qquad \hbox{hence} \quad
  \gamma(r) \,=\, a + im\bigl(\Omega(r)-b\bigr)\,.
\end{equation}
Our goal is to obtain a resolvent bound that is uniform in the parameters
$m$, $k$, and $b$. In view of Remark~\ref{rem:symm}, we can assume 
without loss of generality that $m \ge 1$ and $k \ge 0$. 

Unlike in the axisymmetric case, we are not able to obtain here an 
explicit resolvent bound of the form \eqref{eq:axibound} for all 
values of the parameters $m$, $k$, and $b$. In some regions, 
we will have to invoke Proposition~\ref{prop:GS1}, which was 
established in \cite{GS1} using a contradiction argument that 
does not provide any explicit estimate of the resolvent operator. 
Nevertheless, our strategy is to obtain explicit bounds in the 
largest possible region of  the parameter space, and to rely 
on Proposition~\ref{prop:GS1} only when the direct approach 
does not work. 

We begin with the following elementary observation:

\begin{lem}\label{lem:grandgamma}
If $u,f\in X_{m,k}$ satisfy \eqref{eq:eigsysf}, then for any $M > 0$  
we have the estimate
\begin{equation}\label{eq:gg}
  \|1_{\{|\gamma|\ge M\}}\,u\|_{L^2} \,\le\, \frac{C_1}{M} \bigl( 
  \|u\|_{L^2} + \|f\|_{L^2}\bigr)\,,
\end{equation}
where the constant $C_1$ depends only on $\Omega$.
\end{lem}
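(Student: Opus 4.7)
The plan is to read each of the three scalar equations in the resolvent system \eqref{eq:eigsysf} as an algebraic identity on the set $\{|\gamma|\ge M\}$: dividing by $\gamma$ (which is harmless there) expresses each component $u_r, u_\theta, u_z$ as $1/\gamma$ times a linear combination of components of $f$, components of $\nabla p$ or $p/r$ or $kp$, and terms like $\Omega u_\theta$ or $Wu_r$ that are bounded by $\|u\|_{L^2}$ up to constants depending only on $\Omega$. Since $|\gamma|^{-1} \le M^{-1}$ on the chosen set, taking $L^2$ norms restricted to that set will immediately produce the prefactor $1/M$.

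More concretely, the third line of \eqref{eq:eigsysf} gives $u_z = (f_z - ikp)/\gamma$, the second line gives $u_\theta = (f_\theta - (im/r)p - Wu_r)/\gamma$, and the first line gives $u_r = (f_r - \partial_r p + 2\Omega u_\theta)/\gamma$. Multiplying by the indicator of $\{|\gamma|\ge M\}$ and using $|\gamma|\ge M$ produces bounds such as
\begin{equation*}
\|1_{\{|\gamma|\ge M\}}u_z\|_{L^2} \,\le\, \frac{1}{M}\bigl(\|kp\|_{L^2} + \|f_z\|_{L^2}\bigr),
\end{equation*}
and analogous inequalities for $u_\theta$ and $u_r$ involving $\|(m/r)p\|_{L^2}$, $\|\partial_r p\|_{L^2}$, $\|Wu_r\|_{L^2}$, and $\|\Omega u_\theta\|_{L^2}$.

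To turn these into a bound by $\|u\|_{L^2} + \|f\|_{L^2}$, I invoke Lemma~\ref{lem:ell}, which controls $\|\partial_r p\|_{L^2} + \|mp/r\|_{L^2} + \|kp\|_{L^2}$ by $C(\|u_r\|_{L^2} + \|u_\theta\|_{L^2})$, and assumption H1, which guarantees that $\|\Omega\|_{L^\infty} + \|W\|_{L^\infty}$ is finite. Combining the three estimates then yields \eqref{eq:gg} with a constant $C_1$ depending only on $\Omega$.

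There is essentially no obstacle here: the statement is a direct consequence of the trivial algebraic inversion of \eqref{eq:eigsysf} on the set where $\gamma$ is large in modulus, coupled with the already established pressure energy estimate. The only mild point of attention is that the bound on $u_r$ requires the bound on $u_\theta$ as an intermediate step, but since both inequalities carry the factor $1/M$ and the right-hand sides are controlled by $\|u\|_{L^2}+\|f\|_{L^2}$, this cascading causes no difficulty.
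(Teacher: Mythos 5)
Your proposal is correct and is exactly the paper's proof: multiply each line of \eqref{eq:eigsysf} by $\gamma^{-1}1_{\{|\gamma|\ge M\}}$, take $L^2$ norms, and control the pressure via the energy estimate \eqref{eq:pmkest}. The ``cascading'' you flag at the end is actually unnecessary, since the term $2\Omega u_\theta/\gamma$ on $\{|\gamma|\ge M\}$ is bounded directly by $2\|\Omega\|_{L^\infty}M^{-1}\|u_\theta\|_{L^2}\le CM^{-1}\|u\|_{L^2}$ without needing a prior bound on the restricted $u_\theta$.
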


\begin{proof}
We multiply all three equations in \eqref{eq:eigsysf} by $\gamma(r)^{-1} 
1_{\{|\gamma|\ge M\}}$ and take the $L^2$ norm of the resulting 
expression. Using estimate \eqref{eq:pmkest} to control the pressure, 
we arrive at \eqref{eq:gg}. 
\end{proof}

To obtain more general resolvent estimates, we exploit the differential
equation \eqref{eq:eigscalarf} satisfied by the radial velocity $u_r$. 
As a preliminary step, we prove the following result. 

\begin{lem}\label{lem:estimF}
If $u,f \in X_{m,k} \cap H^1(\Rp,r\dd r)$ and $\cF$ is defined by \eqref{eq:cFdef},
we have
\begin{equation}\label{eq:estimFur}
 \Bigl| \int_0^\infty \cF(r)\bar{u}_r r\dd r\Bigr| \,\le\, \frac{2}{a}\,
 \|\cA^\frac12 \partial_r^*u_r\|_{L^2}\|f\|_{L^2} + C_2 \Bigl(\frac{1}{a} 
  + \frac{1}{a^2}\Bigr)\|u_r\|_{L^2}\|f\|_{L^2}\,,
\end{equation}
where the constant $C_2$ depends only on $\Omega$. 
\end{lem}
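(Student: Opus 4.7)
The plan is to split $\cF$ following \eqref{eq:cFdef} into $\cF = T_1 + T_2 + T_3 + T_4$, where $T_1 = f_r/\gamma$, $T_2$ denotes the two undifferentiated summands pairing $\cA$ with $(-ikf_\theta+\frac{im}{r}f_z)$, and
\[
T_3 \,=\, \frac{im\cA}{\gamma r}\partial_r^* f_\theta\,, \qquad T_4 \,=\, \frac{ik\cA}{\gamma}\partial_r f_z\,,
\]
and to estimate $\int_0^\infty T_j \bar u_r\,r\dd r$ separately. By Proposition~\ref{prop:approximation} I may assume that $u,f$ are smooth and compactly supported in $(0,\infty)$, so that all boundary terms in the integrations by parts below vanish. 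Throughout I will systematically use $|\gamma|\ge a$, the identity $\cA(k^2+m^2/r^2)=1$ with its consequences $k\cA\le\cA^{1/2}$ and $m\cA/r\le\cA^{1/2}$, together with the Cauchy--Schwarz-type bound $|{-}ikf_\theta+\frac{im}{r}f_z|\le \cA^{-1/2}(|f_\theta|^2+|f_z|^2)^{1/2}$, AM--GM in the form $m^2+k^2r^2\ge 2mkr$, and the $L^\infty$ control on $\Omega$ and $r\Omega'$ provided by H1.

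The terms $T_1$ and $T_2$ carry no derivatives of $f$ and contribute only to the $C(1/a+1/a^2)\|u_r\|_{L^2}\|f\|_{L^2}$ portion of \eqref{eq:estimFur}. For $T_1$, Cauchy--Schwarz gives $\frac{1}{a}\|f_r\|_{L^2}\|u_r\|_{L^2}$. For $T_2$, combining $|2ik\Omega\cA/\gamma^2|\le 2\|\Omega\|_{L^\infty}\cA^{1/2}/a^2$ (via $k\cA\le\cA^{1/2}$) with the pointwise bound $|{-}ikf_\theta+\frac{im}{r}f_z|\le \cA^{-1/2}(|f_\theta|^2+|f_z|^2)^{1/2}$ produces the bounded coefficient $2\|\Omega\|_{L^\infty}/a^2$ against $(|f_\theta|^2+|f_z|^2)^{1/2}$, and the second summand of $T_2$ is handled analogously using $\frac{2km\cA}{m^2+k^2r^2}\le 1$ from AM--GM. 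Integrating against $|u_r|$ completes this step.

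The core of the argument concerns $T_3$ and $T_4$. Writing $\partial_r^* f_\theta = r^{-1}\partial_r(rf_\theta)$ and integrating by parts gives
\[
\int_0^\infty \frac{im\cA}{\gamma r}(\partial_r^* f_\theta)\bar u_r\,r\dd r \,=\, -\int_0^\infty f_\theta\,\partial_r\Bigl(\frac{im\cA\bar u_r}{\gamma r}\Bigr)r\dd r\,.
\]
Rewriting $\partial_r\bar u_r = \partial_r^*\bar u_r - \bar u_r/r$ splits this into a principal piece $-\frac{im\cA}{\gamma r} f_\theta\,\partial_r^*\bar u_r$ and a remainder $\bigl[\partial_r(\cA/(\gamma r)) - \cA/(\gamma r^2)\bigr]\bar u_r f_\theta$. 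A short computation using $\gamma' = im\Omega'$ reveals the cancellation
\[
\partial_r\Bigl(\frac{\cA}{\gamma r}\Bigr) - \frac{\cA}{\gamma r^2} \,=\, -\frac{2k^2\cA^2}{\gamma r^2} - \frac{im\Omega'\cA}{\gamma^2 r}\,,
\]
whose two summands, after multiplication by $m$, are pointwise bounded: the first by $1/4$ via $(m^2+k^2r^2)^2\ge 4m^2k^2r^2$, and the second by $C_\Omega$ via $|r\Omega'|\le C_\Omega$ together with $m^2r/(m^2+k^2r^2)\le 1$. The principal term, bounded by $m\cA/r\le\cA^{1/2}$, contributes $\frac{1}{a}\|\cA^{1/2}\partial_r^* u_r\|_{L^2}\|f_\theta\|_{L^2}$, while the remainders contribute $C(1/a+1/a^2)\|u_r\|_{L^2}\|f_\theta\|_{L^2}$. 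The treatment of $T_4$ is parallel: integration by parts produces $\partial_r(\cA r/\gamma)$, the analogous identity $\partial_r(\cA r)-\cA = 2m^2\cA^2/r^2$ delivers the cancellation, and the principal factor $k\cA\le \cA^{1/2}$ yields $\frac{1}{a}\|\cA^{1/2}\partial_r^* u_r\|_{L^2}\|f_z\|_{L^2}$, with lower-order pieces controlled via $km^2\cA^2/r^3\le 1/2$ and $km\cA\le r/2$. Summing the four contributions and using $\|f_\theta\|_{L^2}+\|f_z\|_{L^2}\le 2\|f\|_{L^2}$ yields \eqref{eq:estimFur}. The main obstacle is precisely the algebraic cancellation displayed above: crude pointwise bounds on $\partial_r(\cA/(\gamma r))$ alone do not give a uniformly bounded coefficient, and it is essential to combine the coefficient derivative with the $-\cA/(\gamma r^2)$ correction coming from $\partial_r\bar u_r = \partial_r^*\bar u_r - \bar u_r/r$ to recover the clean bounded residuals.
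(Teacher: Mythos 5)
Your proposal follows exactly the four-term decomposition and integration-by-parts strategy that the paper uses, and the algebraic cancellation you highlight is the one the paper exploits: your identity $\partial_r\bigl(\tfrac{\cA}{\gamma r}\bigr) - \tfrac{\cA}{\gamma r^2} = r\,\partial_r\bigl(\tfrac{\cA}{\gamma r^2}\bigr)$ simply rewrites the paper's remainder coefficient in equivalent form. However, two of the pointwise bounds you state for the remainder coefficients are false as written, and should be corrected. For the $T_3$ remainder you claim $m^2 r/(m^2+k^2 r^2)\le 1$, which fails (take $k=0$ and $r$ large); the correct statement is $m^2/(m^2+k^2 r^2)\le 1$, with the spare factor of $r$ absorbed into $\|r\Omega'\|_{L^\infty}$ as in the paper. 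For the second summand of $T_2$ you claim $\tfrac{2km\cA}{m^2+k^2r^2}\le 1$ by AM--GM, but $\tfrac{2km\cA}{m^2+k^2r^2} = \tfrac{2kmr^2}{(m^2+k^2r^2)^2}$ scales like $\tfrac{1}{mk}$ and is unbounded as $k\to 0^+$; after your Cauchy--Schwarz step the coefficient that actually needs a bound carries an extra $\cA^{-1/2}$, and $\tfrac{2km\cA^{1/2}}{m^2+k^2r^2} = \tfrac{2kmr}{(m^2+k^2r^2)^{3/2}}$ \emph{is} uniformly bounded (it is $\cO(1/m)$). These are arithmetic slips rather than structural gaps, and once repaired your argument matches the paper's; the stated constant $1/4$ for the $k^2$-piece of $T_3$ should also be $1/2$, which is immaterial.
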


\begin{proof}
We split the integral $\int_0^\infty \cF(r)\bar{u}_r r\dd r$ into four pieces, 
according to the expression of $\cF$ in \eqref{eq:cFdef}. As $|\gamma(r)| 
\ge \Re(s) = a$, the first term is easily estimated\:
\[
  \Bigl|\int_0^\infty \frac{1}{\gamma(r)} \,\bar{u}_r f_r r \dd r\Bigr| 
  \,\le\, \frac{1}{a}\,\|u_r\|_{L^2}\|f_r\|_{L^2}\,.
\]
As for the second term, we observe that $|k\cA(-ikf_\theta + \frac{im}{r}f_z)|
\le |f_\theta| + |f_z|$ by \eqref{eq:cAbounds}, so that
\[
 \Bigl| \int_0^\infty \cA\Bigl( \frac{2ik\Omega}{\gamma^2} + 
  \frac{2km}{\gamma}\frac{1}{m^2+k^2r^2}\Bigr)\bigl(-ikf_\theta + 
  \frac{im}{r}f_z\bigr)\bar{u}_r r\dd r\Bigr| \,\le\, \Bigl(\frac{2}{a^2}
  +\frac{2}{am}\Bigr)\|u_r\|_{L^2}\bigl(\|f_\theta\|_{L^2} +\|f_z\|_{L^2}\bigr)\,.
\]
The third term is integrated by parts as follows\:
\[
 \int_0^\infty \bar{u}_r im\frac{\cA}{\gamma r^2}\partial_r(rf_\theta)\,r\dd r 
 \,=\, -\int_0^\infty (\partial_r^*\bar{u}_r)im\frac{\cA}{\gamma r} f_\theta\,r\dd r - 
 \int_0^\infty im\bar{u}_r \partial_r\Bigl(\frac{\cA}{\gamma r^2}\Bigr)rf_\theta
 \,r\dd r\,. 
\]
Since $|m\cA^\frac12/r|\le 1$ by \eqref{eq:cAbounds}, we have on the one hand 
\[
 \Bigl|\int_0^\infty (\partial_r^*\bar{u}_r)im\frac{\cA}{\gamma r}\,f_\theta\,r
  \dd r\Bigr| \,\le\,  \frac{1}{a}\,\|\cA^\frac12 \partial_r^*u_r\|_{L^2}
  \|f_\theta\|_{L^2}\,, 
\]
and on the other hand 
\[
 \Bigl|mr\partial_r\Bigl(\frac{\cA}{\gamma r^2}\Bigr)\Bigr| \,=\, 
 \Bigl|\frac{im^2\Omega'\cA}{\gamma^2 r} + \frac{2mk^2\cA^2}{\gamma r^2}
 \Bigr| \,\le\, \frac{\|r\Omega'\|_{L^\infty}}{a^2} + \frac{2}{am}\,,
\]
so that 
\[
  \Bigl|\int_0^\infty im\partial_r\Bigl(\frac{\cA}{\gamma r^2}\Bigr)r f_\theta 
  \bar{u}_r\,r\dd r\Bigr| \,\le\, C\Bigl(\frac{1}{a^2} + \frac{1}{am}\Bigr)
  \|u_r\|_{L^2} \|f_\theta\|_{L^2}\,.
\]
In a similar way, the fourth and last term is integrated by parts\:
\begin{equation}\label{eq:pouru}
 \int_0^\infty \bar{u}_r \frac{ik}{\gamma}\,\cA\partial_rf_z\,r\dd r \,=\,
 -\int_0^\infty (\partial_r^*\bar{u}_r)\frac{ik}{\gamma}\,\cA f_z\,r\dd r - 
 \int_0^\infty ik\bar{u}_r \partial_r\Bigl(\frac{\cA}{\gamma}\Bigr) f_z\,r\dd r\,.
\end{equation}
Since $|k\cA^\frac12|\le 1$ by \eqref{eq:cAbounds}, we have
\[
 \Bigl|\int_0^\infty(\partial_r^*\bar{u}_r)\frac{ik}{\gamma}\cA f_z\,r\dd r\Bigr| 
 \,\le\, \frac{1}{a}\,\|\cA^\frac12 \partial_r^*u_r\|_{L^2}\|f_z\|_{L^2}\,.
\]
Moreover, using the relations $r\cA' = 2\cA(1-k^2\cA)$ and $\gamma' = im\Omega'$, 
we can estimate the last integral in \eqref{eq:pouru} as follows\:
\begin{align*}
  \Bigl|\int_0^\infty ik\bar{u}_r\partial_r\Bigl(\frac{\cA}{\gamma}\Bigr)f_z
  \,r\dd r\Bigr| \,&\le\,  \Big\|\frac{2k\cA}{\gamma r}(1-k^2\cA)-imk\frac{
  \Omega'\cA}{\gamma^2}\Big\|_{L^\infty} \|u_r\|_{L^2}\|f_z\|_{L^2}\\
  \,&\le\, \Bigl(\frac{2}{am} + \frac{\|r\Omega'\|_{L^\infty}}{a^2}\Bigr)
  \|u_r\|_{L^2}\|f_z\|_{L^2}\,.
\end{align*}
Collecting all estimates above and recalling that $m \ge 1$, we arrive 
at \eqref{eq:estimFur}. 
\end{proof}

We next establish an explicit estimate that will be useful when
the vertical wave number $k$ is small compared to the angular 
Fourier mode $m$.  

\begin{lem}\label{lem:kpetit}
If $m \ge 1$ and $u,f \in X_{m,k}$ satisfy \eqref{eq:eigsysf}, we have 
the estimate
\begin{equation}\label{eq:kpetit}
  \|\cA^\frac12 \partial_r^*u_r\|_{L^2}^2 +\|u_r\|_{L^2}^2 \,\le\, 
  C_3\Bigl(\frac{1}{a^2} + \frac{1}{a^4}\Bigr)\,\frac{k^2}{m^2+k^2}\,\|u_r\|_{L^2}^2 
  + C_3 \Bigl(\frac{1}{a^2} + \frac{1}{a^6}\Bigr)\|f\|_{L^2}^2\,,
\end{equation}
where the constant $C_3 > 0$ depends only on $\Omega$.  
\end{lem}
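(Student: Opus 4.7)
The plan is to derive an energy identity for the scalar resolvent equation \eqref{eq:eigscalarf} by testing against $r\bar u_r$ in $L^2(\Rp,r\,dr)$, and then to control each of the resulting integrals on the right-hand side. Multiplying \eqref{eq:eigscalarf} by $r\bar u_r$, integrating over $(0,\infty)$ in $dr$, and performing an integration by parts on the principal-part term (whose boundary contributions vanish by the decay and regularity properties recorded in the introduction of Section \ref{sec3}), I obtain, after using $\partial_r(r\bar u_r)=r\,\overline{\partial_r^* u_r}$ and taking the real part,
\begin{equation*}
\|\cA^{1/2}\partial_r^* u_r\|_{L^2}^2 + \|u_r\|_{L^2}^2 = -\Re\!\int_0^\infty \frac{k^2\cA\Phi}{\gamma^2}|u_r|^2 r\,dr - \Re\!\int_0^\infty \frac{imr}{\gamma}\partial_r\!\Bigl(\frac{W}{m^2+k^2r^2}\Bigr)|u_r|^2 r\,dr + \Re\!\int_0^\infty \cF\,\bar u_r\, r\,dr.
\end{equation*}

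The first and third integrals are the easy ones. For the Rayleigh contribution, the key pointwise estimate is $k^2\cA(r)\Phi(r)\le C\,k^2/(m^2+k^2)$, which follows from the two bounds $k^2\cA\le 1$ and $k^2\cA\le k^2 r^2/m^2$ combined with the boundedness of both $\Phi$ and $r^2\Phi$ guaranteed by assumption H1 (since $W$ is bounded and $r^2\Omega\to\Gamma$). Using $|\gamma|\ge a$, this yields a contribution of size at most $(C/a^2)\cdot k^2/(m^2+k^2)\cdot\|u_r\|_{L^2}^2$, exactly of the form the lemma demands. For the source term, Lemma \ref{lem:estimF} provides the bound $(2/a)\|\cA^{1/2}\partial_r^* u_r\|_{L^2}\|f\|_{L^2}+C(1/a+1/a^2)\|u_r\|_{L^2}\|f\|_{L^2}$, and two applications of Young's inequality let me absorb $\tfrac14\|\cA^{1/2}\partial_r^* u_r\|_{L^2}^2 + \tfrac14\|u_r\|_{L^2}^2$ back into the left-hand side at the cost of a $C(1/a^2+1/a^4)\|f\|_{L^2}^2$ contribution on the right.

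The genuine obstacle is the middle integral, which I call $\Re(T_2)$. Setting $\psi(r)=W(r)/(m^2+k^2r^2)$, the identity $\Re(im/\gamma)=m^2(\Omega-b)/|\gamma|^2$ combined with the arithmetic--geometric inequality $2a|m(\Omega-b)|\le|\gamma|^2$ reduces matters to controlling $(1/a)\int m r|\psi'(r)|\,|u_r|^2 r\,dr$. A naive pointwise bound on $mr|\psi'|$ does \emph{not} exhibit the required $k^2/(m^2+k^2)$ factor: at $k=0$ this quantity equals $|rW'|/m$, which is uniformly bounded but does not vanish. To extract the correct smallness in $k$, I would rewrite $r\psi'=(rW'-2k^2\cA W)/(m^2+k^2r^2)$, perform an integration by parts that transfers one derivative off $|u_r|^2$, and then use the incompressibility relation $\partial_r^*u_r=-(im/r)u_\theta-iku_z$ to express the resulting $\partial_r^* u_r$ in terms of $u_\theta$ and $u_z$. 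Those components are then bounded through Corollary \ref{cor:uthetaz} by $\|\cA^{1/2}\partial_r^* u_r\|_{L^2}+(1/a)(C\|u_r\|_{L^2}+\|f\|_{L^2})$, and iterating Young's inequality once more generates the additional negative powers of $a$ (up to $1/a^6$) in the $\|f\|_{L^2}^2$ term on the right-hand side, while the $k^2\cA W$ piece of $r\psi'$ supplies precisely the $k^2/(m^2+k^2)$ prefactor on the $\|u_r\|_{L^2}^2$ term. Collecting the three contributions and absorbing the quadratic terms back into the left-hand side yields \eqref{eq:kpetit}; this last step, and in particular the extraction of the $k^2/(m^2+k^2)$ factor from $T_2$ via the Corollary \ref{cor:uthetaz} detour, is unquestionably the hardest part of the proof.
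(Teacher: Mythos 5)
Your treatment of the Rayleigh integral $I_1$ and of the source term via Lemma~\ref{lem:estimF} and Young's inequality is exactly the paper's opening move, and leads correctly to the preliminary estimate of type \eqref{eq:estimur0} with a residual $\tfrac{C}{am}\|u_r\|_{L^2}^2$ coming from a naive bound on $I_2$. The difficulty you correctly identify --- that this residual cannot be absorbed when $am$ is small --- is real. However, the route you propose for $I_2$ has a genuine gap, and I do not believe it can be made to work.

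Your plan is to extract a $k^2/(m^2+k^2)$ factor from $I_2$ itself, by splitting $r\psi'=(rW'-2k^2\cA W)/(m^2+k^2r^2)$ and handling the $rW'$ piece by an integration by parts followed by the incompressibility relation and Corollary~\ref{cor:uthetaz}. But the $rW'$ piece carries no intrinsic smallness in $k$: at $k=0$ it is $rW'/m^2$, and the corresponding contribution to $I_2$ is of order $1/(am)\|u_r\|_{L^2}^2$ regardless of how one rearranges the integral. The incompressibility detour cannot create a $k^2/(m^2+k^2)$ factor either, since Corollary~\ref{cor:uthetaz} re-introduces $\|u_r\|_{L^2}$ with constants of size $1/a$ that are not small in $k$, so the absorption cannot close when both $am$ and $k$ are small. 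The paper does not in fact try to make $I_2$ small; the $k^2/(m^2+k^2)$ factor in \eqref{eq:kpetit} comes entirely from $I_1$, and $I_2$ is \emph{eliminated} using its sign structure. Concretely, the paper sets $Z(r)=-r\partial_r\bigl(W/(m^2+k^2r^2)\bigr)$, observes that $Z>0$ because $W'<0$ (a consequence of assumption H1 that your proposal never uses), writes $I_2 = \int_0^\infty \frac{m^2(b-\Omega)-iam}{|\gamma|^2}Z|u_r|^2 r\,\mathrm{d}r$, and then: (i) if $am\ge 2C_4$ the preliminary estimate already gives \eqref{eq:kpetit}; (ii) if $b\ge 1$ the real part of $I_2$ is nonnegative and can be dropped from the left-hand side of the energy identity; (iii) if $b<1$ one combines the real and imaginary parts of the energy identity to cancel the $Z$-integral at the cost of a factor $1+m(1-b)/a$, which is in turn bounded using the dichotomy $b\ge -1$ (where $m(1-b)/a\le 2m/a\le 4C_4/a^2$) versus $b\le -1$ (where either $m(1-b)\le 4C_1$ or Lemma~\ref{lem:grandgamma} applies). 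These three ingredients --- the $am$ dichotomy, the sign of $Z$ coming from $W'<0$, and the real/imaginary combination --- are the heart of the proof and are absent from your proposal.
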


\begin{proof}
We start from the scalar resolvent equation \eqref{eq:eigscalarf} 
satisfied by the radial velocity $u_r$. Multiplying both sides 
by $r\bar{u}_r$ and integrating the resulting expression over $\Rp$, 
we obtain the following identity\:
\begin{equation}\label{eq:pouru0}
 \|\cA^\frac12 \partial_r^*u_r\|_{L^2}^2 +\|u_r\|_{L^2}^2 + I_1 + I_2 
 \,=\, \int_0^\infty \cF(r)\bar{u}_r r\dd r\,,
\end{equation}
where $\cF(r)$ is defined in \eqref{eq:cFdef} and 
\begin{equation}\label{eq:I12def}
 I_1 \,=\, \int_0^\infty \frac{k^2}{\gamma^2}\,\cA\Phi\,|u_r|^2r\dd r\,, 
 \qquad 
 I_2 \,=\, \int_0^\infty \frac{imr}{\gamma}\partial_r\Bigl(\frac{W}{m^2{+}k^2r^2}\Bigr)
 |u_r|^2r\dd r\,.
\end{equation}
The right-hand side of \eqref{eq:pouru0} is estimated in Lemma~\ref{lem:estimF}. 
On the other hand, using \eqref{eq:cAbounds} and the fact that  $|\gamma(r)| 
\ge \Re(s) = a$, we can bound
\begin{equation}\label{eq:pouru1}
  \Big|\frac{k^2}{\gamma^2}\cA\Phi\Big| \,\le\, 
  \min\biggl\{\frac{\|\Phi\|_{L^\infty}}{a^2}\,,\,\frac{k^2}{a^2m^2}\,
  \|r^2\Phi\|_{L^\infty}\biggr\}\,, \quad \hbox{so that}\quad  |I_1| 
  \,\le\,  \frac{C}{a^2}\,\frac{k^2}{m^2+k^2}\, \|u_r\|_{L^2}^2\,.
\end{equation}
Moreover, we have
\begin{equation}\label{eq:pouru2}
  \Big| \frac{mr}{\gamma}\partial_r\Bigl(\frac{W}{m^2+k^2r^2}\Bigr)\Big| 
  \,\le\, \frac{1}{am}\bigl(\|rW'\|_{L^\infty} + 2 \|W\|_{L^\infty}\bigl)\,, 
  \quad \hbox{so that}\quad  |I_2| \,\le\, \frac{C}{am}\|u_r\|_{L^2}^2\,.
\end{equation}
Combining \eqref{eq:pouru0}, \eqref{eq:I12def}, \eqref{eq:pouru1}, 
\eqref{eq:estimFur} and using Young's inequality, we obtain the 
preliminary estimate
\begin{equation}\label{eq:estimur0}
  \|\cA^\frac12 \partial_r^*u_r\|_{L^2}^2 +\|u_r\|_{L^2}^2 \,\le\, C_4\Bigl(
  \frac{k^2}{a^2(m^2+k^2)}+\frac{1}{am}\Bigr)\|u_r\|_{L^2}^2 + C_4 \Bigl(\frac{1}{a^2} 
  + \frac{1}{a^4}\Bigr) \|f\|_{L^2}^2\,,
\end{equation}
where the constant $C_4 > 0$ depends only on $\Omega$.   

If $ma \ge 2 C_4$, it is clear that \eqref{eq:estimur0} implies \eqref{eq:kpetit}. 
In the rest of the proof, we assume therefore that $ma \le 2C_4$. To obtain 
the improved bound \eqref{eq:kpetit}, the idea is to control the integral term $I_2$
in a different way. Denoting
\[
  Z(r) \,=\, -r\partial_r \Bigl(\frac{W(r)}{m^2{+}k^2r^2}\Bigr) \,>\, 0\,,
\]
we observe that
\begin{equation}\label{eq:I2exp}
  I_2 \,=\,  -\int_0^\infty \frac{im}{\gamma}\,Z(r)|u_r|^2r\dd r \,=\,
  \int_0^\infty \frac{m^2(b-\Omega) -iam}{|\gamma|^2}\,Z(r)|u_r|^2r\dd r\,.
\end{equation}
As $\Omega(r) \le 1$ for all $r$, a lower bound on $\Re I_2$ is obtained
if we replace $b-\Omega$ by $b -1$ in \eqref{eq:I2exp}. Thus, taking 
the real part of \eqref{eq:pouru0}, we obtain the bound
\begin{equation}\label{eq:Ireal}
 \|\cA^\frac12 \partial_r^*u_r\|_{L^2}^2 +\|u_r\|_{L^2}^2 + (b-1)
 \int_0^\infty \frac{m^2}{|\gamma|^2}\,Z(r)|u_r|^2r\dd r \,\le\, |I_1| 
 + |I_3|\,,
\end{equation}
where $I_3 = \int_0^\infty \cF(r)\bar{u}_r r\dd r$. If $b \ge 1$, we 
can drop the integral in the left-hand side, and using the estimates
\eqref{eq:pouru1}, \eqref{eq:estimFur} on $|I_1|$, $|I_3|$ we arrive 
at \eqref{eq:kpetit}. If $b < 1$, we consider also the imaginary part 
of \eqref{eq:pouru0}, which gives the inequality
\begin{equation}\label{eq:Iimag}
  \int_0^\infty \frac{am}{|\gamma|^2}\,Z(r)|u_r|^2r\dd r \,\le\, |I_1| + |I_3|\,.
\end{equation}
Combining \eqref{eq:Ireal}, \eqref{eq:Iimag} so as to eliminate the 
integral term, we obtain
\begin{equation}\label{eq:Itotal}
  \|\cA^\frac12 \partial_r^*u_r\|_{L^2}^2 + \|u_r\|_{L^2}^2 \,\le\, 
  \Bigl(1 + \frac{m(1-b)}{a}\Bigr)\bigl(|I_1| + |I_3|\bigr)\,.
\end{equation}
If $b \ge -1$, then $m(1-b)/a \le 2m/a \le 4 C_4/a^2$. If $b \le -1$, we 
can assume that $m(1-b) \le 4 C_1$, because in the converse case we have
\[
  |\gamma(r)| \,\ge\, m(\Omega(r) - b) \,\ge\, -mb \,\ge\, \frac{m(1-b)}{2}
  \,\ge\, 2 C_1\,, \quad \hbox{for all } r > 0\,,
\]
so that we can apply Lemma~\ref{lem:grandgamma} with $M = 2 C_1$ and
deduce \eqref{eq:kpetit} from \eqref{eq:gg} and \eqref{eq:estimur0}.
So, in all relevant cases, the right-hand side of \eqref{eq:Itotal} is
smaller than $C(1+a^{-2}) \bigl(|I_1| + |I_3|\bigr)$, and using the
estimates \eqref{eq:pouru1}, \eqref{eq:estimFur} on $|I_1|$, $|I_3|$
we obtain \eqref{eq:kpetit}.
\end{proof}

\begin{rem}\label{rem:Adr}
Estimate \eqref{eq:estimur0} implies in particular that
\begin{equation}\label{eq:borneh1}
  \|\cA^\frac12 \partial_r^* u_r\|_{L^2}^2 \,\le\, C_4\Bigl(\frac{1}{a} 
  + \frac{1}{a^2}\Bigr)\,\|u_r\|_{L^2}^2 + C_4 \Bigl(\frac{1}{a^2} 
  + \frac{1}{a^4}\Bigr)\|f\|_{L^2}^2\,.
\end{equation}
In view of Corollary~\ref{cor:uthetaz}, this shows that controlling the 
quantity $\|u_r\|_{L^2}$ in terms of $\|f\|_{L^2}$ is equivalent to 
the full resolvent estimate, because the azimuthal and vertical velocities 
can be estimated using \eqref{eq:uthetaz}, \eqref{eq:borneh1}. 
As an aside, we also observe that \eqref{eq:estimur0} provides an 
explicit resolvent estimate if $a > 0$ is sufficiently large, 
for instance if $a \ge 2 C_4 +1$. Thus we may assume in the sequel 
that $a$ is bounded from above by a constant depending only on $\Omega$.  
\end{rem}

To estimate the radial velocity $u_r$ in the regime where $k$ is 
large compared to $m$, it is convenient to introduce the auxiliary 
function $v(r) = \gamma(r)^{-1/2} u_r(r)$ (this idea already used 
in \cite{GS1} is borrowed from \cite{HG}). The new function $v$ 
satisfies the differential equation
\begin{equation}\label{eq:pourv}
  -\partial_r\bigl(\cA(r)\gamma(r)\partial_r^* v\bigr) + \cE(r) v \,=\, 
  \gamma(r)^{1/2} \cF(r)\,, \qquad r > 0\,,
\end{equation}
where $\cA(r)$, $\cF(r)$ are as in \eqref{eq:eigscalarf} and 
\begin{equation*}
  \cE(r) \,=\, \gamma(r) + \frac{k^2}{\gamma(r)}\,\cA(r)\Phi(r)
  + \frac{imr}{2}\partial_r\Bigl(\frac{W(r)+2\Omega(r)}{m^2+k^2r^2}\Bigr)
  - \frac{m^2\Omega'(r)^2}{4\gamma(r)}\,\cA(r)\,.
\end{equation*}

\begin{lem}\label{lem:HG12}
If $m \ge 1$ and $u,f\in X_{m,k}$ satisfy \eqref{eq:eigsysf}, there exists 
a constant $C_5 > 0$ depending only on $\Omega$ such that the function 
$v(r)= \gamma(r)^{-1/2}u_r(r)$ satisfies the estimate
\begin{equation}\label{eq:estimv}
  \|\cA^{1/2}\partial_r^* v\|_{L^2}^2 + \|v\|_{L^2}^2 \,\le\, \frac{C_5}{a^2}\,
  \frac{m^2}{m^2+k^2}\,\|1_B v\|_{L^2}^2 + C_5\Bigl(\frac{1}{a^3}+\frac{1}{a^5}
  \Bigr)\|f\|_{L^2}^2\,, 
\end{equation}
where $1_B$ is the indicator function of the set $B = \{r > 0\,;\, 
|\gamma(r)| \le r|\Omega'(r)|\}$. 
\end{lem}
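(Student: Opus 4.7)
The plan is to mimic the energy estimate of Lemma~\ref{lem:kpetit}, but applied to the transformed equation \eqref{eq:pourv} satisfied by $v = \gamma^{-1/2} u_r$. I would multiply \eqref{eq:pourv} by $r\bar v$, integrate over $\Rp$, and integrate by parts on the leading term (boundary terms being justified by density from smooth, compactly supported data, as in Section~\ref{sec3}). Taking the real part, and observing that the azimuthal piece $\frac{imr}{2}\partial_r\bigl(\frac{W+2\Omega}{m^2+k^2r^2}\bigr)$ of $\cE$ is purely imaginary and therefore does not contribute, this yields the identity
\[
  a\|\cA^{1/2}\partial_r^* v\|_{L^2}^2 + a\|v\|_{L^2}^2 + a\int_0^\infty \frac{\cA\bigl(k^2\Phi - m^2\Omega'^2/4\bigr)}{|\gamma|^2}|v|^2 r\dd r \,=\, \Re\int_0^\infty \gamma^{1/2}\cF\,\bar v\, r\dd r.
\]

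The bracket $k^2\Phi - m^2\Omega'^2/4$ is the Howard--Gupta--Miles indefiniteness term and may be negative. Since $\Phi\ge 0$, its negative part is controlled by $a\int\frac{m^2\Omega'^2\cA}{4|\gamma|^2}|v|^2 r\dd r$, which I would split along $B$ and $B^c$. On $B^c$, the defining inequality $|\gamma| > r|\Omega'|$ immediately gives $\frac{m^2\Omega'^2\cA}{4|\gamma|^2} < \frac{m^2\cA}{4r^2} \le \frac{1}{4}$, so that this portion is absorbed as $\frac{a}{4}\|v\|_{L^2}^2$ into the left-hand side.

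The main obstacle is the contribution on $B$, where the precise factor $\frac{m^2}{m^2+k^2}$ has to emerge. The key idea is to partition $B$ into the regions $r\ge 1$ and $r\le 1$. For $r\ge 1$, the uniform bound $r|\Omega'(r)| \le M_0 := \sup_{s>0} s|\Omega'(s)|$ (finite by H1, since $s|\Omega'(s)| \to 0$ at both $0$ and $\infty$) together with $|\gamma|\ge a$ yields $\frac{m^2\Omega'^2\cA}{4|\gamma|^2} \le \frac{M_0^2 m^2}{4a^2(m^2+k^2r^2)} \le \frac{M_0^2}{4a^2}\cdot \frac{m^2}{m^2+k^2}$. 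For $r\le 1$, the $\cC^1$ regularity of $\Omega'$ and $\Omega'(0)=0$ provide a constant $L = L(\Omega)$ with $|\Omega'(r)| \le L r$ on $[0,1]$; combined with the elementary bound $\cA \le \min(r^2/m^2,\, 1/k^2)$, I obtain
\[
  \frac{m^2\Omega'^2\cA}{4|\gamma|^2} \,\le\, \frac{L^2}{4a^2}\,\frac{m^2 r^4}{m^2+k^2r^2} \,\le\, \frac{L^2}{4a^2}\min\Bigl(r^4,\,\frac{m^2 r^2}{k^2}\Bigr) \,\le\, \frac{L^2}{2a^2}\cdot\frac{m^2}{m^2+k^2},
\]
where the last inequality uses $\min(1,\,m^2/k^2) \le 2m^2/(m^2+k^2)$ (checked by separating $k^2\ge m^2$ and $k^2<m^2$). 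Thus on all of $B$ the integrand is bounded by $\frac{C(\Omega)}{a^2}\frac{m^2}{m^2+k^2}$, hence $a\int_B \cdots \le \frac{C(\Omega)}{a}\frac{m^2}{m^2+k^2}\|1_B v\|_{L^2}^2$.

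Finally, to estimate the forcing term $\Re\int \gamma^{1/2}\cF\,\bar v\, r\dd r$, I would repeat the strategy of Lemma~\ref{lem:estimF}, decomposing $\cF$ as in \eqref{eq:cFdef} and integrating the pieces containing $\partial_r^* f_\theta$ and $\partial_r f_z$ by parts. The bounds $|k\cA^{1/2}|\le 1$ and $|m\cA^{1/2}/r|\le 1$, together with $|\gamma|\ge a$ (which now costs an extra $a^{-1/2}$ because of the $\gamma^{1/2}$ prefactor), produce an estimate of the form $C(\Omega)(a^{-1/2}+a^{-3/2})\|f\|_{L^2}\bigl(\|\cA^{1/2}\partial_r^* v\|_{L^2}+\|v\|_{L^2}\bigr)$. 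A standard Young inequality then absorbs small multiples of $a\|\cA^{1/2}\partial_r^* v\|_{L^2}^2$ and $a\|v\|_{L^2}^2$ into the left-hand side, leaving a residual of order $(a^{-2}+a^{-4})\|f\|_{L^2}^2$. After also absorbing the $\frac{a}{4}\|v\|_{L^2}^2$ coming from $B^c$ and dividing through by $a$, I arrive at \eqref{eq:estimv} with the claimed powers $a^{-3}$ and $a^{-5}$.
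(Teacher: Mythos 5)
Your proposal is correct and follows essentially the same route as the paper: multiply \eqref{eq:pourv} by $r\bar v$, take the real part (the imaginary piece of $\cE$ drops out), discard the sign-definite $k^2\cA\Phi/|\gamma|^2$ term, estimate the forcing integral as in Lemma~\ref{lem:estimF} with the extra $\gamma^{1/2}$ factor costing $a^{-1/2}$, and control the $m^2\Omega'^2\cA/(4|\gamma|^2)$ term by splitting on $B$ versus $B^c$. The only cosmetic difference is that the paper bounds the dangerous factor by $\|(1+r^2)\Omega'^2\|_{L^\infty}\,m^2/(a^2(m^2+k^2))$ in one step and identifies $\{\cG\ge 1\}\subset B$ with $\cG=m^2\Omega'^2\cA/|\gamma|^2$, whereas you obtain the same $m^2/(m^2+k^2)$ factor via an $r\le 1$ / $r\ge 1$ case analysis; both yield the stated estimate.
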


\begin{proof}
Multiplying both sides of \eqref{eq:pourv} by $r \bar v$, integrating 
the resulting expression over $\Rp$ and taking the real part, we obtain 
the identity
\begin{equation}\label{eq:pourv0}
 a \int_0^\infty \biggl\{\cA |\partial_r^* v|^2 + \Bigl(1 + \frac{k^2}{
 |\gamma|^2}\cA\Phi\Bigr)|v|^2\biggr\}r\dd r =  \Re\int_0^\infty \!\!\bar{v} 
 \gamma^\frac12 \cF r\dd r + \frac{a}{4}\int_0^\infty \!\!m^2\Omega'^2
 \frac{\cA}{|\gamma|^2}|v|^2r\dd r\,.
\end{equation}
Since $\Phi\ge 0$, the left-hand side of \eqref{eq:pourv0} is bounded
from below by $a\bigl(\|\cA^\frac12\partial_r^*v\|_{L^2}^2 + \|v\|_{L^2}^2\bigr)$.
On the other hand, repeating the proof of Lemma~\ref{lem:estimF}, we 
can estimate the first integral in the right-hand side as follows\:
\begin{equation}\label{eq:pourv7}
\begin{split}
  \Bigl|\int_0^\infty \bar{v} \gamma^\frac12 \cF r\dd r\Bigr| \,&\le\, 
  \frac{2}{a^{1/2}}\,\|\cA^\frac12 \partial_r^*v\|_{L^2}\|f\|_{L^2} + C 
  \Bigl(\frac{1}{a^{1/2}} + \frac{1}{a^{3/2}}\Bigr)\|v\|_{L^2}\|f\|_{L^2} \\
  \,&\le\, \frac{a}{2}\bigl(\|\cA^\frac12\partial_r^*v\|_{L^2}^2+\|v\|_{L^2}^2\bigr) 
  +  C\Bigl(\frac{1}{a^2} + \frac{1}{a^4}\Bigr)\|f\|_{L^2}^2\,,
\end{split}
\end{equation}
where the constant $C > 0$ depends only on $\Omega$. It remains to
estimate the second integral in the right-hand side of \eqref{eq:pourv0}. 
Defining $\cG(r) = m^2\Omega'(r)^2\frac{\cA(r)}{|\gamma(r)|^2}$, we observe
that 
\begin{equation}\label{eq:pourv8}
\begin{split}
  \frac{a}{4}\int_0^\infty m^2\Omega'^2\frac{\cA}{|\gamma|^2}\,|v|^2r\dd r 
  \,&\le\, \frac{a}{4}\,\|1_{\{\cG\le 1\}}v\|_{L^2}^2 + \frac{a}{4}\,\|\cG\|_{L^\infty} 
  \|1_{\{ \cG\ge 1\}} v\|_{L^2}^2 \\ 
  \,&\le\, \frac{a}{4}\,\|v\|_{L^2}^2 + \frac{1}{4a}\,\frac{m^2}{m^2+k^2}\, 
  \|(1{+}r^2){\Omega'}^2\|_{L^\infty} \|1_{\{ \cG\ge 1\}} v\|_{L^2}^2\,, 
\end{split}
\end{equation}
where the upper bound on the quantity $\|\cG\|_{L^\infty}$ is obtained using the 
definition of $\cA$ and the fact that $|\gamma(r)| \ge \Re(s) = a$. Now, 
if $\cG(r) \ge 1$, then $|\gamma(r)|^2 \le m^2\Omega'(r)^2 \cA(r) \le r^2 
\Omega'(r)^2$, so that the set $\{\cG\ge 1\}$ is contained in $B = \{r > 0\,;\, 
|\gamma(r)| \le r|\Omega'(r)|\}$. Thus, combining \eqref{eq:pourv0}, 
\eqref{eq:pourv7}, and \eqref{eq:pourv8}, we obtain \eqref{eq:estimv}.
\end{proof}

The following result is a rather direct consequence of 
Lemmas~\ref{lem:grandgamma} and \ref{lem:HG12}\:

\begin{lem}\label{lem:ksmg}
If $m \ge 1$ and $u,f \in X_{m,k}$ satisfy \eqref{eq:eigsysf}, there 
exists a constant $C_6 > 0$, depending only on $\Omega$, such that 
the inequality
\begin{equation}\label{eq:ksmg0}
 \|u\|_{L^2} \,\le\, C_6\Bigl(\frac{1}{a} + \frac{1}{a^{7/2}}\Bigr)\|f\|_{L^2} 
 \end{equation}
holds in each of the following three situations\:
\[
  \hbox{i) } ak \ge C_6 m\,, \quad~
  \hbox{ii) } am \ge C_6 \hbox{ and } C_6(1-b) \le a\,, \quad~
  \hbox{iii) } am \ge C_6 \hbox{ and } C_6b \le a\,.
\]
\end{lem}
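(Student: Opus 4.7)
My plan is to handle the three cases separately and, in each one, first obtain a bound on $\|u_r\|_{L^2}$ and then recover a bound on $\|u\|_{L^2}$ via Corollary~\ref{cor:uthetaz} combined with the estimate \eqref{eq:borneh1} from Remark~\ref{rem:Adr}. By that same remark, I may assume throughout that $a$ is bounded above by a constant depending only on $\Omega$, so only the $a^{-7/2}$ part of the claimed bound \eqref{eq:ksmg0} needs serious work.

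For case i), where $ak \ge C_6 m$, the strategy is to absorb the term $\|1_B v\|_{L^2}^2$ in Lemma~\ref{lem:HG12} into its left-hand side, using the trivial bound $\|1_B v\|_{L^2}^2 \le \|v\|_{L^2}^2$ together with
\begin{equation*}
  \frac{C_5}{a^2}\,\frac{m^2}{m^2+k^2} \,\le\, \frac{C_5}{a^2}\,\frac{m^2}{k^2} \,\le\, \frac{C_5}{C_6^2} \,\le\, \tfrac12\,,
\end{equation*}
which holds provided $C_6 \ge \sqrt{2 C_5}$. This yields $\|v\|_{L^2}^2 \le C(a^{-3}+a^{-5})\|f\|_{L^2}^2$. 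Since $|u_r|^2 = |\gamma|\,|v|^2$, I split with threshold $M = 2C_1$: on $\{|\gamma|\le M\}$ one has $\|1_{\{|\gamma|\le M\}} u_r\|_{L^2}^2 \le M\|v\|_{L^2}^2$, while on $\{|\gamma|\ge M\}$ Lemma~\ref{lem:grandgamma} forces $\|1_{\{|\gamma|\ge M\}} u\|_{L^2} \le \tfrac12(\|u\|_{L^2}+\|f\|_{L^2})$, so that $\|1_{\{|\gamma|\ge M\}} u_r\|_{L^2}$ contributes only a harmless term.

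For cases ii) and iii), the common assumption $am \ge C_6$ combined with the hypothesis on $b$ ensures that $|\gamma(r)|$ is bounded below by a multiple of $am$ outside a small neighborhood of the (possible) critical layer. Concretely, in case ii) with $b \ge 1-a/C_6$, on $\{\Omega(r)\le 1-2a/C_6\}$ one has $b-\Omega(r) \ge a/C_6$ and therefore $|\gamma(r)| \ge ma/C_6$. Applying Lemma~\ref{lem:grandgamma} with threshold $M = ma/C_6$, the assumption $am \ge C_6$ together with $C_6$ chosen large enough relative to $C_1$ turns the estimate into a contraction. The complementary set, by the smoothness of $\Omega$ near $r=0$ and the expansion $\Omega(r) \approx 1 - \tfrac12|\Omega''(0)|r^2$, is contained in $\{r \le C\sqrt{a}\}$, and on this set a direct application of Lemma~\ref{lem:HG12} (whose crucial set $B$ now lies in a bounded $r$-interval on which $|\gamma|$ itself is bounded by $\|r\Omega'\|_{L^\infty}$) provides the needed control. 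Case iii) is handled by the same scheme with $b \le a/C_6$ replacing $b \ge 1-a/C_6$.

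Combining Corollary~\ref{cor:uthetaz} with \eqref{eq:borneh1} one obtains
\begin{equation*}
  \|u_\theta\|_{L^2}+\|u_z\|_{L^2} \,\le\, C\bigl(a^{-1/2}+a^{-1}\bigr)\|u_r\|_{L^2} + C\bigl(a^{-1}+a^{-2}\bigr)\|f\|_{L^2}\,,
\end{equation*}
so a bound of the form $\|u_r\|_{L^2}^2 \le C\,a^{-5}\|f\|_{L^2}^2$ translates into $\|u\|_{L^2}^2 \le C\,a^{-7}\|f\|_{L^2}^2$, which is precisely the claimed $a^{-7/2}$ loss. The main obstacle is closing the bootstrap: Lemma~\ref{lem:grandgamma} controls $\|1_{\{|\gamma|\ge M\}} u\|$ only in terms of $\|u\|$ itself, and the $a^{-1}$ factor in the above conversion would compound any dependence $M = M(a)$ into an unacceptable loss of additional powers of $a$. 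What ultimately makes the estimate close is precisely that, in each of the three alternative hypotheses, the threshold $M$ can be chosen as a constant depending only on $\Omega$, and the absorption constant in the relevant preparatory lemma (\ref{lem:HG12} in case i), Lemma~\ref{lem:grandgamma} in cases ii) and iii)) can simultaneously be made $\le 1/2$ by the unique choice of $C_6$ large enough.
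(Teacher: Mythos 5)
Your case~i) is correct and is the same as the paper's: absorb $\|1_Bv\|_{L^2}^2$ into the left side of \eqref{eq:estimv} via the smallness of $\tfrac{C_5}{a^2}\tfrac{m^2}{m^2+k^2}$, then pass from $v$ to $u_r$ on $\{|\gamma|\le M\}$ and invoke Lemma~\ref{lem:grandgamma} plus Corollary~\ref{cor:uthetaz} and \eqref{eq:borneh1}.

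In cases~ii) and iii), however, your argument does not actually close, and the missing step is precisely the content of the paper's proof. When $k$ is unconstrained (which it is in cases~ii) and iii)), the factor $\tfrac{m^2}{m^2+k^2}$ in \eqref{eq:estimv} can equal $1$, and the coefficient $\tfrac{C_5}{a^2}$ in front of $\|1_B v\|_{L^2}^2$ is large for small $a$; the only way Lemma~\ref{lem:HG12} closes is if $\|1_B v\|_{L^2}=0$, i.e.\ $B=\emptyset$. You establish that $B \subset \{r\le C\sqrt{a}\}$ by the large-$\gamma$ argument on $\{\Omega(r)\le 1-2a/C_6\}$, but then you only \emph{assert} that ``a direct application of Lemma~\ref{lem:HG12} provides the needed control'' on the small region --- noting that $|\gamma|\le r|\Omega'(r)|$ there is just restating the definition of $B$, not a bound. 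What is actually needed, and what the paper supplies, is that $B\cap\{r\le C\sqrt{a}\}=\emptyset$ as well: by H1, $\Omega'(r)=\cO(r)$ near $0$, so $r|\Omega'(r)|\lesssim r^2 \lesssim a < |\gamma(r)|$ on that interval provided $C_6$ is taken large enough. Without this observation the term $\|1_B v\|^2_{L^2}$ is untamed and the bootstrap in \eqref{eq:estimv} fails.

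Case~iii) has an additional issue: you say it is ``handled by the same scheme'' as case~ii), invoking the expansion of $\Omega$ near $r=0$. But the paper's argument for case~iii) lives at the opposite end of the axis: one uses $r\Omega'(r)=\cO(r^{-2})$ at infinity to rule out $B\cap\{r\ge\rho a^{-1/2}\}$, and $\Omega(r)\ge 2\sigma a$ on $\{r\le\rho a^{-1/2}\}$ to rule out the rest; the behavior near $r=0$ plays no role. Finally, a minor point: as written your threshold $M=ma/C_6$ only satisfies $M\ge 1$ under the hypothesis $am\ge C_6$, which is not enough to make $C_1/M$ a contraction; you need a fixed threshold $M=2C_1$ (or $3C_1$ as in the paper) and then to verify that the hypothesis $am\ge C_6$ with $C_6$ large enough places $\{r\ge\epsilon\sqrt a\}$ (resp.\ $\{r\le\rho/\sqrt a\}$) inside $\{|\gamma|\ge M\}$.
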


\begin{proof}
Applying Lemma~\ref{lem:grandgamma} with $M = 3 C_1$, we deduce from 
\eqref{eq:gg} that
\begin{equation}\label{eq:ksmg1}
  \|1_{\{|\gamma|\ge M\}}u\|_{L^2} \,\le\, \frac12\|1_{\{|\gamma|\le M\}}u\|_{L^2} 
  + \frac12\|f\|_{L^2}\,.
\end{equation}
In the sequel, we may thus focus our attention to the region where 
$|\gamma|\le M$. Our strategy is to use Lemma~\ref{lem:HG12}, which
requires a good control on the term involving $1_Bv$ in the 
right-hand side of \eqref{eq:estimv}. We consider three cases separately. 

\smallskip
\noindent i) If $ak \ge m\sqrt{2C_5}$, we simply observe that
\begin{equation}\label{eq:ksmg2}
  \frac{C_5}{a^2}\,\frac{m^2}{m^2+k^2}\,\|1_B v\|_{L^2}^2 \,\le\,  
  \frac12 \|1_B v\|_{L^2}^2 \,\le\, \frac12 \|v\|_{L^2}^2\,.
\end{equation}

\noindent ii) By definition, for any $r > 0$, we have  
\begin{equation}\label{eq:Bdomain}
  r \in B \quad \hbox{if and only if} \quad a^2 + m^2(\Omega(r)-b)^2 \,\le\, 
  r^2 \Omega'(r)^2\,.
\end{equation}
Clearly $B = \emptyset$ if $a > \|r\Omega'\|_{L^\infty}$, hence we 
may assume that $a \le \|r\Omega'\|_{L^\infty}$. Since $\Omega'(r) 
= \cO(r)$ as $r \to 0$ by assumption H1, there exists a small constant
$\epsilon > 0$ (depending only on $\Omega$) such that inequality 
\eqref{eq:Bdomain} cannot be satisfied if $r \le \epsilon a^{1/2}$. 
On the other hand, if $r \ge \epsilon a^{1/2}$, then $\Omega(r) \le 
\Omega(\epsilon a^{1/2}) \le 1 - 2\delta a$ for some sufficiently small 
$\delta > 0$. Thus, if we assume that $b \ge 1-\delta a$ and $m\delta a 
>  \|r\Omega'\|_{L^\infty}$, we see that $m(b-\Omega(r)) \ge m\delta a 
> \|r\Omega'\|_{L^\infty}$, so that  inequality \eqref{eq:Bdomain} is not 
satisfied either. Summarizing, we have $B = \emptyset$ if $ma \ge C$ 
and $C(1-b) \le a$ for some sufficiently large $C > 0$. 

\smallskip
\noindent iii) Similarly, since $r\Omega'(r) = \cO(r^{-2})$ as
$r \to \infty$ by assumption H1, there exists a large constant 
$\rho > 0$ (depending only on $\Omega$) such 
that \eqref{eq:Bdomain} cannot be satisfied if
$r \ge \rho a^{-1/2}$. If $r \le \rho a^{-1/2}$, we have
$\Omega(r) \ge \Omega(\rho a^{-1/2}) \ge 2\sigma a$ for some
$\sigma > 0$. Thus, if we assume that $b \le \sigma a$ and
$m\sigma a > \|r\Omega'\|_{L^\infty}$, inequality \eqref{eq:Bdomain}
is never satisfied, so that $B = \emptyset$.

\smallskip
In all three cases, we deduce from \eqref{eq:estimv} the estimate
\begin{equation}\label{eq:estimvbis}
  \|\cA^{1/2}\partial_r^* v\|_{L^2}^2 + \frac12 \|v\|_{L^2}^2 \,\le\, 
  C_5 \Bigl(\frac{1}{a^3}+\frac{1}{a^5}\Bigr)\|f\|_{L^2}^2\,. 
\end{equation}
As $u_r(r) = \gamma(r)^{1/2}v(r)$, we have $\|1_{\{|\gamma|\le M\}}u_r\|_{L^2} 
\le M^{1/2} \|1_{\{|\gamma|\le M\}}v\|_{L^2} \le M^{1/2} \|v\|_{L^2}$, and
\[
  \bigl\|1_{\{|\gamma|\le M\}} \cA^{\frac12}\partial_r^*u_r\bigr\|_{L^2} \,\le\, 
  M^{1/2}  \bigl\|1_{\{|\gamma|\le M\}} \cA^{\frac12}\partial_r^*v\bigr\|_{L^2} + 
  \frac{\|r\Omega'\|_{L^\infty}}{2a^{1/2}}\,\|v\|_{L^2}\,.
\]
Thus, using the representations \eqref{eq:uthetaexp}, \eqref{eq:uzexp} 
of the azimuthal and vertical velocities, we deduce from 
\eqref{eq:estimvbis} that
\[
  \|1_{\{|\gamma|\le M\}}u_r\|_{L^2} + \|1_{\{|\gamma|\le M\}}u_\theta\|_{L^2} 
  + \|1_{\{|\gamma|\le M\}}u_z\|_{L^2} \,\le\, C\Bigl(\frac{1}{a}
  +\frac{1}{a^{7/2}}\Bigr)\|f\|_{L^2}\,. 
\]
Finally, invoking \eqref{eq:ksmg1} to bound $\|1_{\{|\gamma|\ge M\}}u\|_{L^2}$ 
in terms of $\|1_{\{|\gamma|\le M\}}u\|_{L^2}$, and recalling that we can assume 
$a \le 2C_4 +1$ by Remark~\ref{rem:Adr}, we arrive at \eqref{eq:ksmg0}. 
\end{proof}

\begin{rem}\label{rema:alter}
Alternatively, one can obtain the resolvent estimate in case iii) 
by the following argument. If $m \ge 1$ is large and $b > 0$ is small, 
the inequality $|\gamma(r)| \le M := 3C_1$ can be satisfied only 
if $r \gg 1$. In that region, the coefficients $\Omega(r)$ and 
$W(r)$ in \eqref{eq:eigsysf} are very small, and so is the 
pressure $p$ in view of Lemma~\ref{lem:pressionloin}. It is 
thus easy to estimate $\|1_{\{|\gamma| \le M\}}u\|_{L^2}$ in 
terms $\|f\|_{L^2}$ directly from \eqref{eq:eigsysf}. Combining 
this observation with Lemma~\ref{lem:grandgamma} gives the desired result. 
\end{rem}

\subsection{End of the proof of  Proposition~\ref{prop:main}}
\label{sec43}

If we combine Lemma~\ref{lem:axi}, Lemma~\ref{lem:kpetit},
Remark~\ref{rem:Adr}, and Lemma~\ref{lem:ksmg}, we obtain the
following statement which specifies the regions in the parameter space
where we could obtain a uniform resolvent estimate, with explicit (or
at least computable) constant.

\begin{cor}\label{cor:explicit}
Assume that $m \in \N$, $k \ge 0$, and $s \in \C$ with $\Re(s) = a > 0$. 
There exists a constant $C > 0$, depending only on $\Omega$, such that 
the resolvent estimate 
\begin{equation}\label{eq:unifresksmg}
  \bigl\|(s - L_{m,k})^{-1}\bigr\|_{X_{m,k} \to X_{m,k}} \,\le\, 
  C\Bigl(\frac{1}{a}+\frac{1}{a^4}\Bigr)
\end{equation}
holds in each of the following cases\:
\begin{equation}\label{eq:6cases}
\begin{array}{lll}
  1)~m = 0\,, & 2)~a \ge C\,, & 3)~ma^2 \ge Ck\,, \\[1mm]
  4)~ak \ge C m\,,\quad &5)~am \ge C \hbox{ and }C(1-b) \le a\,, 
  \quad &6)~am \ge C \hbox{ and } Cb \le a\,.
\end{array}
\end{equation}
We recall that $b$ is defined by \eqref{eq:sdef} when $m \neq 0$. 
\end{cor}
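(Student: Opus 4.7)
The plan is to treat the six cases in turn, reading off each one from the previously established estimates. A generic constant $C$ depending only on $\Omega$ will float through the argument, and at the end I fix it large enough for all the absorption steps below to succeed simultaneously.

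Cases 4, 5, 6 are essentially free: they correspond verbatim to items (i), (ii), (iii) of Lemma~\ref{lem:ksmg}, so the estimate \eqref{eq:ksmg0} delivers \eqref{eq:unifresksmg} directly (with the slightly weaker exponent $a^{-7/2}$ absorbed into $a^{-4}$ once $a$ is bounded). Similarly, Case 1 ($m=0$) is immediate from Lemma~\ref{lem:axi}. The two remaining cases rely on the intermediate inequality \eqref{eq:estimur0} obtained in the course of proving Lemma~\ref{lem:kpetit}, together with its refinement \eqref{eq:kpetit}; the strategy in both is the same, namely to use the hypothesis to force the coefficient in front of $\|u_r\|_{L^2}^2$ on the right-hand side to be smaller than $1/2$, absorb that term on the left, and then recover $u_\theta$ and $u_z$ via the companion bound \eqref{eq:borneh1} and Corollary~\ref{cor:uthetaz}.

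For Case 2 ($a \ge C$), the coefficient $C_4\bigl(k^2/(a^2(m^2+k^2)) + 1/(am)\bigr)$ appearing in \eqref{eq:estimur0} is bounded by $C_4(1/a^2 + 1/a)$, since $m \ge 1$ and $k^2/(m^2+k^2) \le 1$; this is smaller than $1/2$ as soon as $a$ is sufficiently large, which gives the bound on $\|u_r\|_{L^2}$ and hence on $\|u\|_{L^2}$. For Case 3 ($ma^2 \ge Ck$), the elementary inequality $k^2/(m^2+k^2) \le k^2/m^2 \le a^4/C^2$, combined with the fact that one may restrict to $a$ bounded above by a constant (thanks to Remark~\ref{rem:Adr}, or equivalently to Case 2 itself), makes the factor $C_3(1/a^2 + 1/a^4)\cdot k^2/(m^2+k^2)$ in \eqref{eq:kpetit} smaller than $1/2$ as soon as $C$ is large. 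Absorbing and applying Corollary~\ref{cor:uthetaz} again yields \eqref{eq:unifresksmg}.

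No serious obstacle is expected here: the whole corollary is a bookkeeping exercise showing that the six regions in \eqref{eq:6cases} are precisely those where the absorption arguments underpinning Lemmas~\ref{lem:axi}, \ref{lem:kpetit}, and \ref{lem:ksmg} actually succeed. The only point requiring a bit of care is to choose the single constant $C$ consistently, so that each threshold ($a \ge C$, $ma^2 \ge Ck$, $ak \ge Cm$, $am \ge C$ and $C(1-b) \le a$, $am \ge C$ and $Cb \le a$) is compatible with the absorption step it is used for; this is handled by taking $C$ to be the maximum of the finitely many thresholds appearing in the intermediate inequalities.
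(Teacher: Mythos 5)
Your proposal is correct and follows exactly the same route as the paper, which proves the corollary by simply ``combining'' Lemma~\ref{lem:axi} (Case 1), Remark~\ref{rem:Adr} and estimate \eqref{eq:estimur0} (Case 2), Lemma~\ref{lem:kpetit} (Case 3), and Lemma~\ref{lem:ksmg} (Cases 4--6); you have correctly matched each ingredient to its case and spelled out the absorption/bookkeeping the paper leaves implicit. The only point I would phrase more carefully is the parenthetical ``or equivalently to Case 2 itself'' in your treatment of Case 3: one should restrict $a$ by the fixed $\Omega$-dependent threshold $2C_4+1$ coming from Remark~\ref{rem:Adr} (not by the yet-to-be-chosen $C$), so that the absorption constant in Case~3 can then be taken large without circularity; with that understanding your argument is sound.
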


To conclude the proof of Proposition~\ref{prop:main}, we use 
a contradiction argument to establish a resolvent estimate 
in the regions that are not covered by Corollary \ref{cor:explicit}. 
More precisely, if we consider a sequence of values of the 
parameters $m,k,s$ (with $\Re(s) = a$) such that none of 
the conditions 1)--6) in \eqref{eq:6cases} is satisfied, 
two possibilities can occur. Either the angular Fourier mode 
$m$ goes to infinity, as well as the vertical wave number $k$, 
and the parameter $b$ remains in the interval $[a/C,1{-}a/C] 
\subset (0,1)$. In that case, after extracting a subsequence, we 
can assume that $b$ converges to some limit. So, to establish 
the resolvent estimate, we have to prove that, for any
$b \in (0,1)$,
\begin{equation}\label{eq:remaining1} 
  \sup_{\Re(s) = a} ~\limsup_{\substack{m\to +\infty,\\ \Im(s)/m \to -b}} 
  \bigl\|(s - L_{m,k})^{-1}\bigr\|_{X_{m,k} \to X_{m,k}} \,<\, \infty\,.
\end{equation}
The other possibility is that the angular Fourier mode $m \ge 1$
stays bounded, as well as the vertical wave number $k \ge k_0 := 
a^2/C$. In that case, we have to prove that, for all $N \ge 1$,
\begin{equation}\label{eq:remaining2}
  \sup_{\Re(s) = a} ~\sup_{\substack{1 \le m \le N,\\ k_0 \le k \le N}}
  ~\bigl\|(s - L_{m,k})^{-1}\bigr\|_{X_{m,k} \to X_{m,k}} \,<\, \infty\,.
\end{equation}

\noindent{\bf Proof of estimate \eqref{eq:remaining1}\:} \\
We argue by contradiction and assume the existence of sequences
$(m_n)_{n\in \N}$ in $\N$, $(k_n)_{n\in \N}$ in $\R_+$, $(b_n)_{n\in \N}$ 
in $\R$ and $(u^n)_{n\in \N}$, $(f^n)_{n\in \N}$ in $X_{m_n,k_n}$ with the 
following properties\: $u^n,f^n$ are solutions of the resolvent system 
$(s_n-L_{m_n,k_n})u^n=f^n$ where $s_n=a-im_nb_n$, $\|u^n\|_{L^2}=1$ 
$\forall n \in \N$, and we have $\|f^n\|_{L^2} \to 0$, $m_n \to +\infty$, 
and $b_n \to b$ as $n\to +\infty$. Without loss of generality we may 
assume that $b_n \in (0,1)$ for all $n\in \N$, and we define $r_n = 
\Omega^{-1}(b_n)$; in particular $r_n \to \bar r := \Omega^{-1}(b)$
as $n \to +\infty$. We also denote by $(p_n)_{n\in \N}$ the sequence
of pressures associated to $u^n$, namely $p_n=P_{m_n,k_n}[u^n]$, and
we set $\gamma_n(r) = a+im_n(\Omega(r)-b_n)$. 

In view of inequalities \eqref{eq:uthetaz} and \eqref{eq:borneh1}, 
the normalization condition $\|u^n\|_{L^2} = 1$ and the assumption 
that $\|f^n\|_{L^2} \to 0$ as $n \to \infty$ imply that the quantity
$\|u_r^n\|_{L^2}$ is bounded from below for large values of $n$, 
namely
\begin{equation}\label{eq:urdomine}
  I_r \,:=\, \liminf_{n\to +\infty} \|u^n_r\|_{L^2}^2 \,>\, 0\,.
\end{equation}
Setting $M = C_1\sqrt{2/I_r}$, we deduce from \eqref{eq:urdomine} and 
Lemma~\ref{lem:grandgamma} that
\begin{equation}\label{eq:urdomineloc}
  \liminf_{n\to +\infty} \int_{\{|\gamma_n|\le M\}}|u^n_r(r)|^2\,r\dd r \,\ge\, 
  \frac{I_r}{2} \,>\, 0\,.
\end{equation}
As the angular velocity $\Omega$ is continuously differentiable and
strictly decreasing on $\Rp$, the set $\{|\gamma_n|\le M\}$ is asymptotically
contained in the interval $[r_n-R/m_n,r_n+R/m_n]$, where $R > 0$ 
is a constant that depends only on $\Omega$ and $I_r$ (one may 
take $R = 2M |\Omega'(\rb)|^{-1}$). Since the length of that interval 
shrinks to zero as $n \to \infty$, it is useful to introduce rescaled vector
fields and functions by setting
\[
  u^n(r) = m_n^{1/2}\, \tilde u^n(m_n(r{-}r_n))\,, \quad
  f^n(r) =  m_n^{1/2}\, \tilde f^n(m_n(r{-}r_n))\,, \quad
  p_n(r) = m_n^{-1/2}\, \tilde p_n(m_n(r{-}r_n))\,.
\]
Note that the new variable $y := m_n(r{-}r_n)$ is defined on the $n$-dependent
domain $(-m_nr_n,\infty)$. Likewise, we set $\Omega(r)=\tilde 
\Omega_n(m_n(r{-}r_n))$, $W(r)=\tilde W_n(m_n(r{-}r_n))$ and $\gamma_n(r)=
\tilde \gamma_n(m_n(r{-}r_n))$.
The system \eqref{eq:eigsysf} may then be rewritten as 
\begin{equation}\label{eq:eigsysftilde1}
\begin{array}{l}
  \tilde \gamma_n(y) \tilde u^n_r - 2\tilde \Omega_n(y)\tilde u^n_\theta \,=\, 
  -\partial_y \tilde p_n + \tilde f^n_r\,, \\[1mm]
  \tilde  \gamma_n(y) \tilde u^n_\theta  + \tilde W_n(y)\tilde u^n_r \,=\, 
  -\frac{i}{r_n+y/m_n} \tilde p_n+\tilde f^n_\theta\,, \\[1mm]
  \tilde \gamma_n(y)\tilde u^n_z  \,=\, -i\frac{k_n}{m_n} \tilde p_n+\tilde f^n_z\,, 
\end{array} 
\end{equation}
and the incompressibility condition becomes
\begin{equation}\label{eq:eigsysftilde2}
  \partial_y \tilde u^n_r + \tfrac{i}{r_n+y/m_n} \tilde u^n_\theta + 
  i\tfrac{k_n}{m_n} \tilde u^n_z \,=\, -\tfrac{1}{r_nm_n+y}\tilde u^n_r\,.
\end{equation}
After this change of variables, inequality \eqref{eq:urdomineloc} implies 
the lower bound
\begin{equation}\label{eq:Urdomineloc}
  \liminf_{n\to +\infty} \int_{-R}^{R} |\tilde u^n_r(y)|^2\dd y \,\ge\, 
  \frac{I_r}{2\rb} \,>\, 0\,.
\end{equation}
Since, by assumption, inequalities 3) and 4) in \eqref{eq:6cases} are not 
satisfied, we can suppose without loss of generality that $k_n/m_n \to \delta \in 
(0,+\infty)$ as $m\to +\infty$. By construction, we also have 
$\tilde \Omega_n(y) \to \Omega(\bar r)$, $\tilde W_n(y) \to W(\bar r)$ and
$\tilde \gamma_n(y) \to \gamb(y) := a+i\Omega'(\bar r)y$ as $n\to +\infty$,
uniformly on any compact subset of $\R$.

Using the normalization condition for $u^n$, we observe that
\[
  1 \,=\, \int_{-m_n r_n}^{\infty} |\tilde u^n(y)|^2\Bigl(r_n+\frac{y}{m_n}\Bigr)
  \dd y \,\ge\, \frac{r_n}{2} \int_{-m_n\frac{r_n}{2}}^{\infty} |\tilde u^n(y)|^2\dd y\,.
\]
Extracting a subsequence if needed, we may therefore assume that
$\tilde u^n \rightharpoonup U$ in $L^2(K)$ for each compact subset
$K\subset \R$, where $U \in L^2(\R)$ and $\|U\|_{L^2}^2 \le 2/\rb$.
Similarly, using the uniform bounds on the pressure given by 
Lemma~\ref{lem:ell}, we may assume that $\tilde p_n \to P$ and 
$\partial_y \tilde p_n \rightharpoonup P'$ in $L^2(K)$, for each compact 
subset $K\subset \R$, where $P \in H^1_{\rm loc}(\R)$ and $P'\in L^2(\R)$. 
The radial velocities $\tilde u^n_r$ have even better convergence
properties. Indeed, it follows from \eqref{eq:borneh1} that the quantity
$\|\cA_n^{1/2}\partial_r^* u_r^n\|_{L^2}$ is uniformly bounded for $n$ 
large, and since $\|\cA_n^{1/2} r^{-1} u_r^n\|_{L^2} \le 1/m_n \to 0$ we deduce
that $\|\cA_n^{1/2}\partial_r u_r^n\|_{L^2}$ is uniformly bounded too. 
After the change of variables, this implies that
\[
  C \,\ge\, \int_{-m_n r_n}^{\infty} \frac{m_n^2 r^2}{m_n^2 + k_n^2 r^2}\,
  |\partial_y \tilde u_r^n(y)|^2\,r\dd y \,\ge\, \frac{r_n}{2}\,
  \frac{r_n^2}{4+\delta_n^2 r_n^2} \int_{-m_n\frac{r_n}{2}}^{\infty} 
  |\partial_y \tilde u_r^n(y)|^2\dd y\,,
\]
where $r = r_n+y/m_n$ and $\delta_n = k_n/m_n$. Thus $U_r \in H^1(\R)$, 
and extracting a further subsequence if necessary we can assume that
$\partial_y \tilde u^n_r \rightharpoonup U_r'$ and $\tilde u^n_r \to U_r$ 
in $L^2(K)$, for each compact subset $K\subset \R$. In particular, 
we deduce from \eqref{eq:Urdomineloc} that $U_r$ is not identically zero. 
Moreover, passing to the limit in \eqref{eq:eigsysftilde1}, 
\eqref{eq:eigsysftilde2}, we obtain the asymptotic system
\begin{equation}\label{eq:eigsyslim}
\begin{array}{l}
 (a+i\Omega'(\rb)y) U_r - 2 \Omega(\rb) U_\theta \,=\, -P'\,,\\[1mm]
 (a+i\Omega'(\rb)y) U_\theta  + W(\rb) U_r \,=\, -\frac{i}{\rb} P\,, \\[1mm]
 (a+i\Omega'(\rb)y) U_z  \,=\, -i\delta P\,, 
\end{array} \qquad\quad
  U_r' + \tfrac{i}{\rb} U_\theta + i\delta U_z \,=\, 0\,,
\end{equation}
where equalities hold almost everywhere. We claim that system
\eqref{eq:eigsyslim} does not possess any solution such that
$U \in L^2_{\rm loc}(\R)$, $P \in H^1_{\rm loc}(\R)$ and such that
$U_r \in H^1(\R)$ is nontrivial. This will provide the desired
contradiction.

Indeed, if we repeat the proof of Lemma~\ref{lem:resODE} (with $f = 0$), 
we can extract from system \eqref{eq:eigsyslim} a second-order 
differential equation for the radial velocity $U_r$. Eliminating 
the pressure $P$ and the azimuthal velocity $U_\theta$, we obtain as 
in \eqref{eq:ODE2}, \eqref{eq:ODE4}\:
\[
  \delta\Bigl(U_r' - \frac{iW(\rb)}{\rb\,\gamb(y)}U_r\Bigr) + 
  i \Bigl(\delta^2 + \frac{1}{\rb^2}\Bigr)U_z \,=\,0\,, \qquad
  U_z' + \frac{iW(\rb)}{\rb\,\gamb(y)}U_z -i\delta
  \Bigl(1 + \frac{\Phi(\rb)}{\gamb(y)^2}\Bigr) \,=\, 0\,, 
\]
and combining these relations we arrive at
\begin{equation}\label{eq:lim3}
 -U_r'' + \biggl[\Bigl(\delta^2 + \frac{1}{\rb^2}\Bigr) + 
  \frac{\Phi(\rb)\delta^2}{\gamb(y)^2}\Biggr] U_r \,=\, 0\,, 
  \qquad y \in \R\,,
\end{equation}
where
$\Phi(\rb) = 2 \Omega(\rb)W(\rb) > 0$. If we observe that $\gamb(y) = 
a + i\Omega'(\rb)y = i\Omega'(\rb)(y + ic)$, where  $c=-a/\Omega'(\rb)$, 
we can write \eqref{eq:lim3} in the equivalent form
\begin{equation}\label{eq:besseltype}
  -U_r'' + \Bigl(\kappa^2 - \frac{J(\rb)\delta^2}{(y+ic)^2} \Bigr) U_r 
  \,=\, 0\,,\qquad y \in \R\,,
\end{equation}
where  $\kappa^2 = 1/\rb^2+\delta^2$ and $J(\rb) = \Phi(\rb)/\Omega'(\rb)^2$. 
Up to a multiplicative constant, the unique solution of \eqref{eq:besseltype}
that belongs to $L^2(\Rp)$ is 
\begin{equation}\label{eq:Knu}
  U_r(y) \,=\, (y+ic)^{1/2} K_\nu\bigl(\kappa(y+ic)\bigr)\,, \qquad y \in \R\,, 
\end{equation}
where $K_\nu$ is the modified Bessel function, see \cite[Section~9.6]{AS}, 
and $\nu \in \C$ is determined, up to an irrelevant sign, by the
relation $\nu^2 = \frac14 - J(\rb)\delta^2$. In fact, any linearly
independent solution of \eqref{eq:besseltype} grows like
$\exp(\kappa y)$ as $y \to +\infty$. Now, it is well known that
the function $K_\nu(\kappa(y+ic))$ has itself an exponential
growth as $y \to -\infty$, see \cite[Section~9.7]{AS}, and this implies 
that \eqref{eq:besseltype} has no nontrivial solution in $L^2(\R)$.

\medskip\noindent{\bf Proof of estimate \eqref{eq:remaining2}\:} \\
This is the only place where we use our assumption H2 on the vorticity 
profile. According to Proposition~\ref{prop:GS1}, which is the main 
result of \cite{GS1}, the resolvent operator $(s - L_{m,k})^{-1}$ is 
well defined as a bounded linear operator in $X_{m,k}$ for any 
$m \in \N$, any $k \in \R$, and any $s \in \C$ with $\Re(s) \neq 0$. 
To prove \eqref{eq:remaining2}, it remains to show that, for any 
fixed $m$, the resolvent estimate holds uniformly in $k$ on compact 
subsets of $\Rp = (0,\infty)$, and uniformly in $s$ on vertical lines. Actually, 
we can assume that the spectral parameter lies in a compact set too, 
because if $m$ is fixed and $|\Im(s)| \ge m + 2C_1$, we have 
$|\gamma(r)| \ge |\Im(s)| - m \ge 2C_1$ and the resolvent bound 
follows from estimate \eqref{eq:gg} with $M = 2C_1$. So the only 
missing step is\:

\begin{lem}\label{lem:localbd}
For any $m \in \Z$, the resolvent norm $\|(s - L_{m,k})^{-1}
\|_{X_{m,k} \to X_{m,k}}$ is uniformly bounded in the neighborhood 
of any point $(k,s) \in \R \times \C$ with $k \neq 0$ and 
$\Re(s) > 0$. 
\end{lem}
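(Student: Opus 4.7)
The plan is to reduce this local bound to a perturbation problem on a single, fixed Hilbert space. Since the space $X_{m,k}$ itself depends on $k$, I first introduce the linear isomorphism $\Phi_k : X_{m,k} \to X_{m,k_0}$ defined by $\Phi_k(u_r, u_\theta, u_z) = (u_r, u_\theta, (k/k_0) u_z)$, which is well defined whenever $k$ and $k_0$ are nonzero, carries the incompressibility condition for $k$ exactly onto the one for $k_0$, and has both $\|\Phi_k\|$ and $\|\Phi_k^{-1}\|$ tending to $1$ as $k \to k_0$. Setting $\tilde L_{m,k} := \Phi_k L_{m,k} \Phi_k^{-1}$, the conjugated operators all act on the \emph{same} space $X_{m,k_0}$, and from $s - \tilde L_{m,k} = \Phi_k(s - L_{m,k})\Phi_k^{-1}$ one has the identity $(s - L_{m,k})^{-1} = \Phi_k^{-1}(s - \tilde L_{m,k})^{-1} \Phi_k$. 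The problem is thus reduced to establishing a uniform resolvent bound for the family $\tilde L_{m,k}$ near $(k_0, s_0)$.

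The key technical step is to show that $\tilde L_{m,k} \to L_{m,k_0}$ in the operator norm of $X_{m,k_0}$ as $k \to k_0$. A direct computation from \eqref{eq:Lmkdef}, using that $A_m$ is independent of $k$ and that the pressure is the only ingredient of $B_{m,k}$ depending on $k$, shows that for $v \in X_{m,k_0}$ the three components of $(\tilde L_{m,k} - L_{m,k_0})v$ are $\partial_r(p_{k_0} - p_k)$, $(im/r)(p_{k_0} - p_k)$, and $ik_0(p_{k_0} - p_k) + i(k_0 - k^2/k_0)p_k$, where $p_k := P_{m,k}[v]$ and $p_{k_0} := P_{m,k_0}[v]$ (note that the pressure depends only on $v_r, v_\theta$). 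Applying Lemma~\ref{lem:pcont} with $u_1 = u_2 = v$ and $(k_1, k_2) = (k_0, k)$ controls $\|\partial_r(p_{k_0}-p_k)\|_{L^2}$, $\|m(p_{k_0}-p_k)/r\|_{L^2}$ and $\|k_0(p_{k_0}-p_k)\|_{L^2}$ by $C|k_0/k - k/k_0|\,\|v\|_{L^2}$, while $\|p_k\|_{L^2}$ is uniformly bounded by $C(k_0)\,\|v\|_{L^2}$ via Lemma~\ref{lem:ell} since $k$ stays away from zero; combined with $k_0 - k^2/k_0 \to 0$, this gives $\|\tilde L_{m,k} - L_{m,k_0}\|_{X_{m,k_0} \to X_{m,k_0}} \to 0$.

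The conclusion is now routine bounded-perturbation theory. Under assumptions H1, H2, Proposition~\ref{prop:AB} and Corollary~\ref{cor:GS1} imply $\sigma(L_{m,k_0}) \subset i\R$, so $s_0$ lies in the resolvent set of $L_{m,k_0}$, and by analyticity of the resolvent the norm $\|(s - L_{m,k_0})^{-1}\|_{X_{m,k_0}\to X_{m,k_0}}$ is bounded by some $M$ on a small closed disk around $s_0$. A Neumann series applied to $s - \tilde L_{m,k} = (s - L_{m,k_0})\bigl[I - (s - L_{m,k_0})^{-1}(\tilde L_{m,k} - L_{m,k_0})\bigr]$ then yields a uniform bound on $\|(s - \tilde L_{m,k})^{-1}\|$ once the neighborhood of $(k_0, s_0)$ is chosen small enough that $\|\tilde L_{m,k} - L_{m,k_0}\| < 1/(2M)$; transferring back through $\Phi_k^{\pm 1}$ gives the claim. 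The only genuinely delicate ingredient is the operator-norm continuity in $k$ of the nonlocal part $B_{m,k}$, and this is precisely what Lemma~\ref{lem:pcont} provides.
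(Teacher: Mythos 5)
Your argument is correct and follows essentially the same route as the paper's own proof: you conjugate by the same rescaling map (your $\Phi_k$ is $M_k^{-1}$ with $k^* = k_0$) to work on the fixed space $X_{m,k_0}$, then use Lemma~\ref{lem:pcont} to get operator-norm continuity of the conjugated family in $k$, and conclude by standard bounded-perturbation theory. The only difference is cosmetic: you spell out the explicit form of $\tilde L_{m,k} - L_{m,k_0}$ and the Neumann series, where the paper simply asserts that continuity follows from Lemma~\ref{lem:pcont} and that the conclusion is then immediate.
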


\begin{proof}
Since the function space  $X_{m,k}$ changes when $k$ is varied, 
due to the incompressibility condition, the result does not immediately 
follow from standard perturbation theory. However, it is easy to 
reformulate the problem so that perturbation theory can be applied. 
It is sufficient to note that, for any fixed $k^*\neq 0$, the mappings
\[
  M_k : X_{m,k^*} \to  X_{m,k}\,, \qquad  \bigl(u_r,u_\theta,u_z\bigr) 
  \,\mapsto\, \bigl(u_r,u_\theta,\frac{k^*}{k}u_z\bigr)\,,
\]
are linear homeomorphisms that depend continuously on $k$ in a 
neighborhood of $k^*$. Given $s \in \C$, $m \in \Z$, and $k \in \R$ 
close to $k^*$, the resolvent equation $(s - L_{m,k})u = f$ for 
$u,f \in X_{m,k}$ is equivalent to the conjugated equation
$(s - \cL_{m,k})v = g$, where $u = M_k v$, $f = M_k g$, and 
\begin{equation}\label{eq:conjugL}
 \cL_{m,k} \,=\, M_k^{-1} L_{m,k} M_k \,:\, X_{m,k^*} \to  X_{m,k^*}\,.  
\end{equation}
Now, using in particular estimate \eqref{eq:deltap} in
Lemma~\ref{lem:pcont}, it is straightforward to verify that the
operator $\cL_{m,k}$ depends continuously on $k$ as a bounded linear
operator in $X_{m,k^*}$, as long as $k \neq 0$. This implies that the
resolvent norm $\|(s - \cL_{m,k})^{-1}\|_{X_{m,k^*} \to X_{m,k^*}}$
depends continuously on the parameters $s$ and $k$, when $k$ stays in
a neighborhood of $k^*$, and the conclusion easily follows.
\end{proof}

\section{Appendix\: analysis in $X_{m,k}$}
\label{sec5}

We collect here various auxiliary results that are useful for our
analysis in Section~\ref{sec3}. We first show that smooth and
compactly supported divergence-free vector fields are dense in the
space $X_{m,k}$ defined by \eqref{eq:Xmkdef}, and we give simple
criteria for compactness in that space. Finally, we establish explicit
representations formulas for the pressure $p$ satisfying \eqref{eq:pmk}.

\subsection{Approximation in $X_{m,k}$}\label{sec51}

Truncating divergence-free vector fields is not straightforward, and 
a general solution to that problem involves the so-called Bogovskii 
operator, see e.g. \cite{Ga}. However, in the particular case of the 
space $X_{m,k}$ introduced in \eqref{eq:Xmkdef}, localization can
be performed in a rather elementary way, which we now describe. 

\begin{lem}\label{lem:truncation}
For any $m \in \Z$ and any $k \in \R$, the set of all $u \in X_{m,k}$ 
with compact support in $(0,+\infty)$ is dense in $X_{m,k}$. 
\end{lem}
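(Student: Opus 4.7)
The plan is a truncation-plus-correction scheme. Given $u \in X_{m,k}$, pick a radial cutoff $\chi_n \in \cC^\infty_c((0,\infty))$ with $0 \le \chi_n \le 1$ and $\chi_n \to 1$ pointwise. The product $\chi_n u$ fails to lie in $X_{m,k}$ only because
\[
  \partial_r^*(\chi_n u_r) + \tfrac{im}{r}(\chi_n u_\theta) + ik(\chi_n u_z) \,=\, \chi_n' u_r,
\]
and this error is supported in the compact transition region of $\chi_n$. I will add a correction $v^{(n)}$, supported in the same region, to cancel that error and verify that $\|v^{(n)}\|_{L^2} \to 0$; the field $u^{(n)} = \chi_n u + v^{(n)}$ then lies in $X_{m,k}$, has compact support in $(0,\infty)$, and converges to $u$.

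The case $m = k = 0$ is immediate: the incompressibility condition forces $\partial_r^* u_r = 0$, so $r u_r$ is constant and $u_r \in L^2(\R_+,r\dd r)$ forces $u_r \equiv 0$, while the remaining components are unconstrained and can be truncated directly. When $m \ne 0$, I place the correction in the azimuthal component, setting $v^{(n)}_\theta = \tfrac{i r \chi_n' u_r}{m}$, which cancels $\chi_n' u_r$ through the $\tfrac{im}{r}$ coefficient. Choosing $\chi_n(r) = \chi(r/n)\chi(1/(nr))$ for a smooth plateau function $\chi$ makes $|r \chi_n'|$ uniformly bounded and pointwise vanishing, so dominated convergence applied to $\|v^{(n)}_\theta\|_{L^2}^2 = m^{-2} \int |r \chi_n'|^2 |u_r|^2 r \dd r$ yields the desired convergence.

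The delicate case is $m = 0$, $k \ne 0$, where the natural choice $v^{(n)}_z = \tfrac{i}{k}\chi_n' u_r$ does not obviously vanish in $L^2$, because $\chi_n'$ concentrates on shrinking sets near $r = 0$ on which no pointwise control of $u_r$ is available a priori. The remedy is a hidden regularity: from $\partial_r(r u_r) = -ik r u_z$ and $u_z \in L^2(\R_+, r\dd r)$, Cauchy-Schwarz gives $|r u_r(r) - r_0 u_r(r_0)| \le |k| \bigl((r^2-r_0^2)/2\bigr)^{1/2} \|u_z\|_{L^2}$, so $r u_r$ is continuous; a Chebyshev argument in $L^2(\R_+,r\dd r)$ produces a sequence $r_j \to 0$ along which $|u_r(r_j)|$ is bounded, hence $r u_r(r) \to 0$ as $r \to 0$, and integrating the constraint from the origin yields the uniform bound $\|u_r\|_{L^\infty} \le \tfrac{|k|}{\sqrt{2}}\|u_z\|_{L^2}$. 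With this in hand, I split $\chi_n = \alpha_n \beta_n$, where $\alpha_n(r) = \alpha(r/n)$ is treated exactly as in the previous case, and $\beta_n(r) = \phi(\log(nr)/\log n)$ is a logarithmic cutoff at the origin; then $|r \beta_n'| \le \|\phi'\|_\infty/\log n$ and
\[
  \|\beta_n' u_r\|_{L^2}^2 \,\le\, \frac{\|u_r\|_\infty^2 \|\phi'\|_\infty^2}{(\log n)^2} \int_{1/n}^1 \frac{\dd r}{r} \,=\, \frac{\|u_r\|_\infty^2 \|\phi'\|_\infty^2}{\log n} \,\longrightarrow\, 0.
\]
The main obstacle is exactly this case at the origin: without first extracting the concealed $L^\infty$ bound on $u_r$ from the incompressibility constraint, any polynomial-scale cutoff at $0$ produces a correction whose $L^2$ norm can remain bounded away from zero.
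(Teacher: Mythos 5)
Your proposal is correct and follows the same overall architecture as the paper: truncate by a smooth cutoff, correct the resulting divergence error by an extra $e_\theta$ (if $m \neq 0$) or $e_z$ (if $m=0$, $k \neq 0$) component supported in the transition region, and handle $m=k=0$ separately. The $m\neq 0$ case is identical: $\|r\chi_n'\|_\infty$ bounded plus shrinking support gives the result.

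The one place where you depart from the paper is the handling of the origin when $m=0$, $k\neq 0$. The paper exploits the same integral representation $u_r(r) = -\frac{ik}{r}\int_0^r u_z(s)\,s\dd s$, but applies Cauchy--Schwarz so as to keep the $r$-dependence:
\[
  |u_r(r)|^2 \,\le\, \frac{k^2}{2}\int_0^r |u_z(s)|^2\,s\dd s\,,
\]
a bound that \emph{decays to zero} as $r\to 0$. Feeding this into the polynomial cutoff $\phi(r/\epsilon)$ immediately yields
\[
  \int_{\epsilon/2}^\epsilon \frac{1}{\epsilon^2}|\phi'(r/\epsilon)|^2|u_r(r)|^2 r\dd r
  \,\le\, C\int_0^\epsilon |u_z(s)|^2 s\dd s \,\xrightarrow[\epsilon\to 0]{}\, 0\,,
\]
with no logarithmic cutoff needed. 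You instead discard the $r$-dependence of that bound to obtain the uniform estimate $\|u_r\|_\infty \le \tfrac{|k|}{\sqrt{2}}\|u_z\|_{L^2}$, which is indeed too weak for a polynomial cutoff and forces you to introduce the logarithmic rescaling $\beta_n(r) = \phi(\log(nr)/\log n)$ to gain the factor $1/\log n$. Both arguments are valid, but your closing diagnosis is slightly off: the obstruction is not that one must extract the $L^\infty$ bound and pass to a logarithmic cutoff, but rather that the $L^\infty$ bound by itself is the wrong quantity to retain. Keeping the refined local estimate (which your own Cauchy--Schwarz step already produces before you take the supremum) makes the original polynomial cutoff suffice and shortens the argument. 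Incidentally, your preparatory detour to prove $r u_r$ continuous and to extract a subsequence $r_j \to 0$ with $|u_r(r_j)|$ bounded is also avoidable: the representation formula for $u_r$ in terms of $\int_0^r u_z\,s\dd s$ follows directly from the divergence-free constraint and the requirement $u_r \in L^2(\R_+, r\dd r)$, which rules out the singular homogeneous solution $c/r$.
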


\begin{proof}
Let $\phi,\psi : \R_+ \to \R$ be smooth, monotonic functions such that
\[
  \phi(r) \,=\, \begin{cases} 0 &\hbox{if } r \le \frac12\,, \cr
  1 &\hbox{if } r \ge 1\,, \end{cases} \qquad \hbox{and}\quad
  \psi(r) \,=\, \begin{cases} 1 &\hbox{if } r \le 1\,, \cr
  0 &\hbox{if } r \ge 2\,. \end{cases} \qquad
\]
Given $\epsilon \in (0,1)$, we define $\chi_\epsilon(r) = \min\{\phi(r/\epsilon),
\psi(\epsilon r)\}$. By construction $\chi_\epsilon$ is smooth and satisfies
$\chi_\epsilon(r) = 0$ if $r \le \epsilon/2$ or $r \ge 2/\epsilon$,
and $\chi_\epsilon(r) = 1$ if $\epsilon \le r \le 1/\epsilon$. 

Assume first that $m \neq 0$. Given $u \in X_{m,k}$, we define 
$v_\epsilon = u \chi_\epsilon + w_\epsilon e_\theta$, where
\[
  w_\epsilon(r) \,=\, \frac{i}{m}\,r \chi_\epsilon'(r) u_r(r)\,, 
  \quad r > 0\,.
\]
The corrector $w_\epsilon$ is tailored so that $\div v_\epsilon = 
(\div u) \chi_\epsilon + u_r \chi_\epsilon' + \frac{im}{r} w_\epsilon = 0$. 
Moreover $w_\epsilon$ is supported in the set $[\epsilon/2,2/\epsilon]$ 
by construction. Since $\chi_\epsilon(r) \to 1$ as $\epsilon \to 0$ for 
any $r > 0$,  it is clear that $\|u \chi_\epsilon - u\|_{L^2} \to 0$ as 
$\epsilon \to 0$. Moreover
\begin{align*}
  \|w_\epsilon\|_{L^2}^2 \,&=\, \frac{1}{m^2}\int_{\epsilon/2}^{\epsilon}
  \frac{r^2}{\epsilon^2}\,|\phi'(r/\epsilon)|^2\,|u_r(r)|^2 r\dd r + 
  \frac{1}{m^2}\int_{1/\epsilon}^{2/\epsilon} \epsilon^2 r^2 
  |\psi'(\epsilon r)|^2 \,|u_r(r)|^2 r\dd r \\
  \,&\le\, \frac{C}{m^2}\Bigl(\int_0^{\epsilon} |u_r(r)|^2 r\dd r 
  + \int_{1/\epsilon}^\infty |u_r(r)|^2 r\dd r\Bigr) 
  \,\xrightarrow[\epsilon \to 0]{}\, 0\,.
\end{align*}
Thus $\|v_\epsilon - u\|_{L^2} \to 0$ as $\epsilon \to 0$, which is 
the desired result. 

Next we assume that $m = 0$ and $k \neq 0$. Given any $u \in X_{0,k}$, the 
divergence-free condition $\partial_r^* u_r + ik u_z = 0$ implies that
\begin{equation}\label{urrep}
  u_r(r) \,=\, -\frac{ik}{r}\int_0^r u_z(s) s\dd s\,, \qquad \hbox{hence}
  \quad |u_r(r)|^2 \,\le\, \frac{k^2}{2} \int_0^r |u_z(s)|^2 s\dd s\,,
\end{equation}
for any $r > 0$. We now define $\tilde v_\epsilon = u \chi_\epsilon + \tilde 
w_\epsilon e_z$, where
\[
  \tilde w_\epsilon(r) \,=\, \frac{i}{k}\,\chi_\epsilon'(r) u_r(r)\,, 
  \quad r > 0\,.
\]
As before $\tilde v_\epsilon$ is divergence-free and supported in 
$[\epsilon/2,2/\epsilon]$. Moreover, using \eqref{urrep}, we find
\begin{align*}
  \|\tilde w_\epsilon\|_{L^2}^2 \,&=\, \frac{1}{k^2}\int_{\epsilon/2}^{\epsilon}
  \frac{1}{\epsilon^2}\,|\phi'(r/\epsilon)|^2\,|u_r(r)|^2 r\dd r + 
  \frac{1}{k^2}\int_{1/\epsilon}^{2/\epsilon} \epsilon^2  
  |\psi'(\epsilon r)|^2 \,|u_r(r)|^2 r\dd r \\
  \,&\le\, C \int_0^{\epsilon} |u_z(r)|^2 r\dd r + \frac{C \epsilon^2}{k^2}
  \int_{1/\epsilon}^\infty |u_r(r)|^2 r\dd r \,\xrightarrow[\epsilon \to 0]{}\, 0\,,
\end{align*}
and this shows that $\|\tilde v_\epsilon - u\|_{L^2} \to 0$ as $\epsilon \to 0$. 

Finally, if $u \in X_{0,0}$, the divergence-free condition asserts that 
$\partial_r^* u_r = 0$, hence $u_r = 0$. It follows that $u\chi_\epsilon$ 
is divergence-free, and we know that $\|u \chi_\epsilon - u\|_{L^2} \to 0$ as 
$\epsilon \to 0$. 
\end{proof}

Using Lemma~\ref{lem:truncation} and a standard regularization
procedure, we obtain:

\begin{prop}\label{prop:approximation}
For any $m \in \Z$ and any $k \in \R$, the set of all smooth, 
divergence-free vector fields with compact support in $(0,+\infty)$ 
is dense in $X_{m,k}$. 
\end{prop}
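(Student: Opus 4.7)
The plan is to reduce the claim to Lemma~\ref{lem:truncation}, after which only smoothing remains, and to carry out that smoothing by a radial mollification that one then corrects in order to restore the incompressibility condition. By Lemma~\ref{lem:truncation} it suffices to approximate any $u\in X_{m,k}$ whose support is compactly contained in some interval $[a,b]\subset (0,\infty)$. Fix such a $u$, choose $\epsilon\in(0,a/2)$, and let $\rho_\epsilon$ be a standard mollifier on $\R$ with support in $(-\epsilon,\epsilon)$. Define $u^\epsilon$ componentwise by convolution in the radial variable, $u^\epsilon_j(r)=\int_\R \rho_\epsilon(r-s)u_j(s)\,\D s$ for $j\in\{r,\theta,z\}$, where $u$ is extended by zero outside $[a,b]$. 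Then $u^\epsilon$ is smooth, compactly supported in $[a-\epsilon,b+\epsilon]\subset(0,\infty)$, and a standard argument yields $u^\epsilon\to u$ in $L^2(\Rp,r\dd r)^3$.

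The difficulty is that $u^\epsilon$ is generally not divergence-free, because the coefficient $1/r$ appearing in $\partial_r^* = \partial_r + 1/r$ and in $\tfrac{im}{r}u_\theta$ does not commute with the convolution. A short calculation that uses $\partial_r^* u_r+\tfrac{im}{r}u_\theta+iku_z=0$ gives
\[
 g^\epsilon\,:=\,\partial_r^* u^\epsilon_r+\tfrac{im}{r}u^\epsilon_\theta+iku^\epsilon_z
  \,=\, \Bigl[\tfrac{1}{r}u^\epsilon_r-\bigl(\tfrac{u_r}{r}\bigr)^\epsilon\Bigr]
  +im\Bigl[\tfrac{1}{r}u^\epsilon_\theta-\bigl(\tfrac{u_\theta}{r}\bigr)^\epsilon\Bigr],
\]
where the superscript $\epsilon$ denotes the convolution defined above. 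Each bracket is a commutator of $1/r$ with $\rho_\epsilon \ast$ applied to $u_r$ or $u_\theta$; writing it as $\int\rho_\epsilon(r-s)\tfrac{s-r}{rs}\phi(s)\,\D s$ and observing that the integrand is supported on $r,s\in[a/2,b+\epsilon]$ with $|s-r|\le\epsilon$, Young's inequality yields $\|g^\epsilon\|_{L^2}\le C(a,b)\,\epsilon\,\|u\|_{L^2}$, so $g^\epsilon\to 0$ in $L^2(\Rp,r\dd r)$.

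It remains to correct $u^\epsilon$ into a divergence-free field by subtracting a small, smooth, compactly supported vector. Three cases arise. If $m\neq 0$, set $v^\epsilon:=u^\epsilon-\tfrac{r}{im}g^\epsilon\,e_\theta$; if $m=0$ and $k\neq 0$, set $v^\epsilon:=u^\epsilon-\tfrac{1}{ik}g^\epsilon\,e_z$; if $m=k=0$, then the definition of $X_{0,0}$ forces $u_r=0$, hence $g^\epsilon\equiv 0$ and no correction is needed. In each case $v^\epsilon$ is smooth, divergence-free, supported in $[a-\epsilon,b+\epsilon]\subset(0,\infty)$, and the corrector satisfies $\|v^\epsilon-u^\epsilon\|_{L^2}\le C(a,b,m,k)\|g^\epsilon\|_{L^2}\to 0$. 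Combining this with $u^\epsilon\to u$ in $L^2$ concludes the proof. The only genuine subtlety is the commutator estimate, which works only because $u$ is supported away from $r=0$; this is precisely what Lemma~\ref{lem:truncation} allows us to assume.
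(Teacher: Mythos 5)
Your proof is correct, and it takes a genuinely different route from the paper's. The paper lifts the one-dimensional field $u$ to a vector field $U$ on $\R^3$ via \eqref{eq:urep1}, mollifies $U$ in the horizontal Cartesian variables $(x_1,x_2)$ with a \emph{radially symmetric} kernel, and then pulls back: because mollification commutes with $\div$ in Cartesian coordinates, $U^\epsilon$ is automatically divergence-free, and the radial symmetry of the kernel guarantees that $U^\epsilon$ has the same $e^{im\theta}e^{ikz}$ angular structure, so that the resulting $u^\epsilon$ lies in $X_{m,k}$ with no correction needed. You instead mollify directly in the radial variable $r$, which destroys the constraint $\partial_r^* u_r + \frac{im}{r}u_\theta + iku_z = 0$; you then quantify the damage by the commutator identity $g^\epsilon = [\tfrac1r u_r^\epsilon - (\tfrac{u_r}{r})^\epsilon] + im[\tfrac1r u_\theta^\epsilon - (\tfrac{u_\theta}{r})^\epsilon]$ and estimate $\|g^\epsilon\|_{L^2} = O(\epsilon)$ via Schur's test, exploiting crucially that the support stays away from $r=0$ — which is exactly what Lemma~\ref{lem:truncation} buys you. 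Finally you restore incompressibility with the same explicit corrector device (a single azimuthal or vertical component) already used in the proof of Lemma~\ref{lem:truncation}. Both approaches reduce first to compact support via Lemma~\ref{lem:truncation}. The paper's argument is slicker (no error estimate, no correction step), while yours is more elementary in that it stays one-dimensional throughout, at the cost of the commutator bound and the case split on $(m,k)$. One minor point worth making explicit in your write-up: the identity for $g^\epsilon$ uses that the distributional relation $\partial_r^* u_r + \frac{im}{r}u_\theta + iku_z = 0$ convolves to zero, i.e.\ $(\partial_r u_r)^\epsilon + (\tfrac{u_r}{r})^\epsilon + im(\tfrac{u_\theta}{r})^\epsilon + ik u_z^\epsilon = 0$, which is legitimate because $u_r/r, u_\theta/r \in L^2$ on the compact support; you implicitly use this but it deserves a sentence.
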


\begin{proof}
According to Lemma~\ref{lem:truncation}, it is sufficient to prove 
that any $u \in X_{m,k}$ with compact support can be approximated 
by smooth, divergence-free and compactly supported vector fields. 
Assume thus that $u \in X_{m,k}$ is such that $u(r) = 0$ for 
$r \le r_1$ and $r \ge r_2$, with $0 < r_1 < r_2 < \infty$. We 
consider the vector field $U = (U_1,U_2,U_3)$ in $\R^3$ defined 
by
\begin{equation}\label{eq:urep1}
  U(r\cos\theta, r\sin\theta,z) \,=\, \Bigl( u_r(r) e_r(\theta) + 
  u_\theta(r) e_\theta(\theta) + u_z(r) e_z\Bigr) \,e^{im\theta} \,e^{ikz}\,,
\end{equation}
where $r > 0$, $\theta \in \R/(2\pi\Z)$, and $z \in \R$. Then 
$\div U = 0$ and, for any fixed $x_3 \in \R$, the map $(x_1,x_2) 
\mapsto U(x_1,x_2,x_3)$ belongs to $L^2(\R^2,\C^3)$, because 
$\|U(\cdot,\cdot,x_3)\|_{L^2(\R^2)}^2 = 2\pi \|u\|_{L^2}^2 < \infty$.  
Given $\epsilon > 0$, we define the approximation
\[
  U^\epsilon(x_1,x_2,x_3) \,=\, \frac{1}{\epsilon^2}\int_{\R^2} 
  \chi\Bigl(\frac{x_1-y_1}{\epsilon},\frac{x_2-y_2}{\epsilon}\Bigr)
  \,U(y_1,y_2,x_3)\dd y_1 \dd y_2\,, 
\]
where $\chi : \R^2 \to \R_+$ is smooth, {\em radially symmetric}, 
supported in the unit ball, and normalized so that $\int \chi \dd x_1 
\dd x_2 = 1$. By construction, the vector field $U^\epsilon$ is smooth, 
divergence-free, and close to $U$ in the sense that $\|U^\epsilon(\cdot,
\cdot,x_3) - U(\cdot,\cdot,x_3)\|_{L^2(\R^2)} \to 0$ as $\epsilon \to 0$ 
for any $x_3 \in \R$. If $\epsilon \le r_1/2$, we also have 
$U^\epsilon(x_1,x_2,x_3) = 0$ whenever $r := (x_1^2 + x_2^2)^{1/2} \le 
r_1/2$ or $r \ge r_1 + r_2$. Under this assumption, since $\chi$ 
is radially symmetric, we can represent $U^\epsilon$ as
\begin{equation}\label{eq:urep2}
  U^\epsilon(r\cos\theta, r\sin\theta,z) \,=\, \Bigl(u^\epsilon_r(r) 
  e_r(\theta) + u^\epsilon_\theta(r) e_\theta(\theta) + u^\epsilon_z(r) 
  e_z\Bigr) \,e^{im\theta} \,e^{ikz}\,,
\end{equation}
for some {\em smooth} vector field $u^\epsilon = u_r^\epsilon e_r + 
u_\theta^\epsilon e_\theta + u_z^\epsilon e_z \in X_{m,k}$, which is 
supported in the compact interval $[r_1/2,r_2 + r_1] \subset (0,\infty)$. 
Here the condition on the support is essential, because the 
unit vectors $e_r, e_\theta$ are smooth only away from the axis $r = 0$. 
From \eqref{eq:urep1}, \eqref{eq:urep2} we deduce that
\[
  \|u^\epsilon - u\|_{L^2(\R_+,r\dd r)}^2 \,=\, \frac{1}{2\pi} 
  \|U^\epsilon(\cdot,\cdot,0) - U(\cdot,\cdot,0)\|_{L^2(\R^2)}^2 
  \,\xrightarrow[\epsilon \to 0]{}\, 0\,,
\]
and this gives the desired result. 
\end{proof}

\subsection{Compactness criteria}\label{sec52}

We next mention two simple compactness criteria in the space 
$X = L^2(\R_+,r\dd r)$. 

\begin{lem}\label{lem:compcrit}
For any $\alpha > 0$ and any $M > 0$, the sets 
\begin{align*}
  E_{M,\alpha} \,&=\, \bigl\{f \in X\,;\, \|\partial_r f\|_{L^2} \le M\,,~ 
  \|r^\alpha f\|_{L^2} 
  \le M\bigr\}\,, \quad \hbox{and}\\ 
  E_{M,\alpha}^* \,&=\, \bigl\{f \in X\,;\, \|\partial_r^* f\|_{L^2} \le 
  M\,,~ \|r^\alpha f\|_{L^2} \le M\bigr\}\,,
\end{align*}
are compact in $X$. We recall that $\partial_r^* = \partial_r + \frac1r$. 
\end{lem}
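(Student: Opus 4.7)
The plan is to reduce both compactness claims to the Fr\'echet--Kolmogorov criterion in $L^2(\R^2)$, by identifying each radial profile $f \in X$ with an object on the plane: the radial scalar $F(x) := f(|x|)$ for $E_{M,\alpha}$, and the radial vector field $V(x) := f(|x|)\,e_r(\theta)$ for $E_{M,\alpha}^*$. In both cases one has the isometries $\|F\|_{L^2(\R^2)}^2 = \|V\|_{L^2(\R^2)}^2 = 2\pi \|f\|_{L^2}^2$ and $\bigl\||x|^\alpha F\bigr\|_{L^2(\R^2)}^2 = \bigl\||x|^\alpha V\bigr\|_{L^2(\R^2)}^2 = 2\pi \|r^\alpha f\|_{L^2}^2$. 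The overall scheme is identical on both sides: first establish a uniform $H^1(\R^2)$ bound on the associated family, then invoke Fr\'echet--Kolmogorov in $L^2(\R^2)$. Equicontinuity under translations follows from the gradient bound, while tightness at infinity follows from the weighted bound via $\|F\|_{L^2(|x|>R)}^2 \le 2\pi M^2 R^{-2\alpha} \to 0$; convergence of the planar sequence in $L^2(\R^2)$ is then equivalent, via the isometry, to convergence of $\{f_n\}$ in $X$.

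For $E_{M,\alpha}$, the gradient bound $\|\nabla F\|_{L^2(\R^2)} = \sqrt{2\pi}\,\|\partial_r f\|_{L^2} \le \sqrt{2\pi}\,M$ is immediate, so only the uniform $L^2(\R^2)$ bound on $F$ requires work. I would split $\R^2 = B_1 \cup B_1^c$, control the tail $\|F\|_{L^2(B_1^c)}^2 \le 2\pi M^2$ via the weighted bound, and on $B_1$ use a Poincar\'e--Wirtinger inequality on $B_2$ with ``subtracted average'' taken to be the annular average of $F$ over $B_2 \setminus B_1$; that average is itself controlled by Cauchy--Schwarz and the weighted bound (since $|x|^{2\alpha} \ge 1$ on the annulus).

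For $E_{M,\alpha}^*$, the $L^2$ bound on $f$ will come from a Hardy-type computation: setting $g(r) := r f(r)$ so that $g'(r) = r\,\partial_r^* f(r)$, the hypotheses yield $g, g' \in L^2(\dd r/r)$ near the origin, and a short dyadic contradiction argument (using that $\int |g|^2 \dd r/r < \infty$ precludes a fixed lower bound $|g|\ge \delta$ on arbitrarily small intervals of fixed logarithmic length) forces $g(r) \to 0$ as $r \to 0^+$. Cauchy--Schwarz on $g(r) = \int_0^r g'(s)\dd s$ then delivers the pointwise bound $|f(r)|^2 = r^{-2}|g(r)|^2 \le \tfrac12 \|\partial_r^* f\|_{L^2}^2 \le M^2/2$, which combined with the weighted tail gives $\|f\|_{L^2} \le C(M,\alpha)$ uniformly. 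Next, the $H^1(\R^2)$ bound on $V$ follows from the classical identity $\|\nabla V\|_{L^2(\R^2)}^2 = \|\div V\|_{L^2}^2 + \|\curl V\|_{L^2}^2$ (valid for every $L^2$ vector field with divergence and curl in $L^2$ by Plancherel), where $\div V = \partial_r^* f$ in the distributional sense on all of $\R^2$ --- the boundary contribution at $r = 0$ vanishing precisely because $g(0) = 0$ --- and $\curl V = 0$ by radial symmetry.

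Closedness of the two sets in $X$ reduces to routine lower semicontinuity of the constraint norms under $L^2$ convergence, and the compactness conclusion follows. The main technical obstacle will be the Hardy argument for $E_{M,\alpha}^*$: both the vanishing $g(r) \to 0$ at the origin and the subsequent identification of $\div V$ as a distribution on all of $\R^2$ rest on carefully combining the two bounds near zero, and one has to ensure the resulting estimates are genuinely uniform over the set.
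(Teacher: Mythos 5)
Your proposal is correct. For $E_{M,\alpha}$ you take essentially the paper's route: embed $f$ as the radial planar scalar $F$ and invoke an $L^2(\R^2)$ compactness criterion; the one point where you do more work than the paper is the explicit $L^2(\R^2)$ bound on $F$ via the Poincar\'e--Wirtinger argument on $B_2$ with annular average, whereas the paper simply cites Rellich's criterion \cite[Theorem~XIII.65]{ReSi} without spelling out boundedness. For $E_{M,\alpha}^*$ you genuinely diverge from the paper. The paper stays on $\Rp$ and runs a direct Arzel\`a--Ascoli argument: it writes $f(r) = \frac1r \int_0^r \partial_r^* f(s)\,s\dd s$, which immediately yields the uniform $L^\infty$ bound $|f(r)| \le M$, the tightness estimates $\int_0^\epsilon |f|^2\,r\dd r \le M^2\epsilon^2$ and $\int_L^\infty |f|^2\,r\dd r \le M^2 L^{-2\alpha}$, and the uniform H\"older-$\tfrac12$ equicontinuity of $r\mapsto rf(r)$ on compacts. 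You instead embed $f$ as the radial vector field $V = f(|x|)\,e_r$ and use the Plancherel identity $\|\nabla V\|_{L^2(\R^2)}^2 = \|\div V\|_{L^2}^2 + \|\curl V\|_{L^2}^2$ to convert the hypothesis $\partial_r^* f\in L^2$ into an $H^1(\R^2)$ bound on $V$, after which Fr\'echet--Kolmogorov finishes. Both hinge on the same elementary fact, namely that $g(r) := rf(r)$ vanishes at the origin (in the paper it is built into the representation formula, for you it kills the potential Dirac mass in $\div V$), and your dyadic contradiction for $g(0^+)=0$ is more circuitous than necessary --- since $g' = r\partial_r^* f \in L^1_\loc$, $g$ is absolutely continuous, $g(0^+)$ exists, and $g\in L^2(\dd r/r)$ then forces $g(0^+)=0$ directly. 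Your approach is more uniform (one scheme for both sets) and conceptually cleaner, at the cost of the distributional bookkeeping needed to justify $\div V = \partial_r^* f$ and $\curl V = 0$ globally on $\R^2$; the paper's Arzel\`a--Ascoli argument for $E_{M,\alpha}^*$ is more elementary, avoids that bookkeeping, and is arguably the faster route given that the pointwise formula for $f$ already does most of the work.
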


\begin{proof}
If $f \in X$, we define $F : \R^2 \to \C$ by $F(x) = (2\pi)^{-1/2} f(|x|)$ 
for all $x \in \R^2$. The linear map $f \mapsto F$ is an isometric embedding 
of $X$ into $L^2(\R^2)$, and the image of $E_{M,\alpha}$ under that map is 
included in the set
\[
  \bigl\{F \in L^2(\R^2)\,;\, \|\nabla F\|_{L^2} \le M\,,~
  \| |x|^\alpha F\|_{L^2} \le M\bigr\}\,,
\]
which is known to be compact in $L^2(\R^2)$ by Rellich's criterion, 
see \cite[Theorem~XIII.65]{ReSi}. This shows that the closed subset
$E_{M,\alpha} \subset X$ is relatively compact, hence compact. 

Compactness of $E_{M,\alpha}^*$ can be established by a variant of 
the previous argument, but for a change we give here a direct proof
based on the Arzel\`a-Ascoli theorem. If $f \in E_{M,\alpha}^*$, we 
observe that
\begin{equation}\label{eq:fintrep}
  f(r) \,=\, \frac{1}{r}\int_0^r \partial_r^* f(s) s\dd s\,, \qquad 
  \hbox{for all } r > 0\,.
\end{equation}
This shows that $|f(r)| \le \|\partial_r^* f\|_{L^2} \le M$ for all 
$r > 0$, and we deduce that
\[
  \int_0^\epsilon |f(r)|^2 r\dd r \,\le\, M^2 \epsilon^2\,,
  \qquad 
  \int_L^\infty |f(r)|^2 r\dd r \,\le\, \frac{1}{L^{2\alpha}} \|r^\alpha f\|_{L^2}^2 
  \,\le\, \frac{M^2}{L^{2\alpha}}\,,
\]
for any $\epsilon > 0$ and any $L > 0$. In particular, the set
$E_{M,\alpha}^*$ is bounded in $X$, and its elements are uniformly
small near the origin and at infinity. Moreover, it follows from
\eqref{eq:fintrep} and H\"older's inequality that
\[
  |r_1 f(r_1) - r_2 f(r_2)| \,\le\, M |r_1 - r_2|^{1/2}\,,
  \qquad \hbox{for all } r_1, r_2  > 0\,,
\]
which means that the elements of $E_{M,\alpha}^*$ are {\em uniformly 
equicontinuous} on any compact interval $[\epsilon,L] \subset (0,\infty)$. 
These properties altogether imply that $E_{M,\alpha}^*$ is a compact 
subset of $X$. 
\end{proof}

\subsection{Representation formulas}\label{sec53}

Finally we give explicit representation formulas for the pressure
$p$ satisfying \eqref{eq:pmk}, in terms of solutions of the 
homogeneous equation
\begin{equation}\label{eq:homogene}
  -\partial_r^* \partial_r p(r) + \frac{m^2}{r^2}p(r) + k^2
  p(r) \,=\, 0\,.
\end{equation}
If $k \neq 0$, a pair of linearly independent solutions of 
\eqref{eq:homogene} is given by the modified Bessel functions $I_m(|k|r)$ 
and $K_m(|k|r)$, see e.g. \cite[Section~9.6]{AS}. For later use, 
we recall that $I_{-m}(r) = I_m(r)$, $K_{-m}(r) = K_m(r)$, and
$K_m(r) I_m'(r) -  K_m'(r) I_m(r) = 1/r$ for all $r > 0$. 
Moreover, if $m \ge 1$, then
\begin{equation}\label{eq:asym0}
  I_m(r) \,\sim\, \frac{1}{m!}\Bigl(\frac{r}{2}\Bigr)^m\,, \qquad
  K_m(r) \,\sim\, \frac{(m{-}1)!}{2}\Bigl(\frac{2}{r}\Bigr)^m\,, 
  \qquad \hbox{as } r \to 0\,,
\end{equation}
whereas $I_0(r) \to 1$ and $K_0(r) \sim -\log(r)$ as $r \to 0$. 
For all $m \in \Z$, we also have
\begin{equation}\label{eq:asyminf}
  I_m(r) \,\sim\, \frac{1}{\sqrt{2\pi}}\,\frac{e^r}{\sqrt{r}}\,, \qquad
  K_m(r) \,\sim\, \sqrt{\frac{\pi}{2}}\,\frac{e^{-r}}{\sqrt{r}}\,, 
  \qquad \hbox{as } r \to +\infty\,.
\end{equation}
When $k = 0$ linearly independent solutions of \eqref{eq:homogene} are 
$r^{\pm m}$ if $m \neq 0$, and $\{1,\log(r)\}$ if $m = 0$. 

\begin{lem}\label{lem:rep}
Assume that the vorticity profile $W$ satisfies assumption~H1. 
For any $m \in \Z$, $k \in \R$, and $u \in X_{m,k}$, the elliptic equation 
\eqref{eq:pmk} has a unique solution $p = P_{m,k}[u]$ such that 
$p(r) = \cO(|\log r|^{1/2})$ as $r \to 0$ and $p(r) \to 0$ as 
$r \to +\infty$. If $k \neq 0$, we have $p = 2im p_1 + 2|k|p_2$ where
\begin{equation}\label{eq:repmk}
\begin{split}
  p_1(r) \,&=\, K_m(|k|r) \int_0^r I_m(|k|s) (s\Omega)'
  u_r(s) \dd s + I_m(|k|r) \int_r^\infty K_m(|k|s) (s\Omega)' 
  u_r(s) \dd s\,, \\
  p_2(r) \,&=\, K_m(|k|r) \int_0^r I_m'(|k|s) \Omega(s) 
  u_\theta(s) s\dd s + I_m(|k|r) \int_r^\infty K_m'(|k|s) \Omega(s) 
  u_\theta(s) s\dd s\,.
\end{split}
\end{equation}
If $k = 0$ and $m \neq 0$, then $p = \sigma p_1 + p_2$ where
$\sigma = m/|m|$ and 
\begin{equation}\label{eq:repm0}
\begin{split}
  p_1(r) \,&=\, \frac{i}{r^{|m|}} \int_0^r s^{|m|} (s\Omega)'(s) u_r(s)\dd s +
  i r^{|m|} \int_r^\infty \frac{1}{s^{|m|}} (s\Omega)'(s) u_r(s)\dd s\,, \\
  p_2(r) \,&=\, \frac{1}{r^{|m|}} \int_0^r s^{|m|} \Omega(s) u_\theta(s)\dd s -
  r^{|m|} \int_r^\infty \frac{1}{s^{|m|}} \Omega(s) u_\theta(s)\dd s\,.
\end{split}
\end{equation}
Finally, if $k = m = 0$, then $p(r) = - 2 \int_r^\infty \Omega(s) 
u_\theta(s)\dd s$. 
\end{lem}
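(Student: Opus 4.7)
The plan is to derive the formulas by straightforward variation of parameters applied to the second-order linear ODE \eqref{eq:pmk}. By Proposition~\ref{prop:approximation}, I may first assume that $u$ is smooth and compactly supported in $(0,+\infty)$, in which case the right-hand side
\begin{equation*}
  g(r) \,:=\, 2im(\partial_r^* \Omega)(r) u_r(r) - 2\partial_r^*(\Omega u_\theta)(r)
\end{equation*}
is itself smooth and compactly supported; every integral that appears then converges absolutely and all integrations by parts are justified without any boundary contribution at $0$ or $\infty$. The general case $u \in X_{m,k}$ follows by passing to the limit, using the pressure bound of Lemma~\ref{lem:ell} to ensure continuity of both sides of the claimed formulas with respect to $u$.

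For $k \neq 0$, I take as fundamental pair for \eqref{eq:homogene} the functions $\phi_1(r) = I_m(|k|r)$ and $\phi_2(r) = K_m(|k|r)$. From the classical identity $I_m(x)K_m'(x) - I_m'(x)K_m(x) = -1/x$, their Wronskian equals $\phi_1\phi_2' - \phi_1'\phi_2 = -1/r$. Choosing the constants of integration so as to kill the exponentially growing mode $\phi_1$ at infinity and the singular mode $\phi_2$ at the origin, variation of parameters produces
\begin{equation*}
  p(r) \,=\, \phi_1(r)\!\int_r^\infty\! \phi_2(s)\,g(s)\,s\dd s \,+\, \phi_2(r)\!\int_0^r\! \phi_1(s)\,g(s)\,s\dd s.
\end{equation*}
The contribution of $2im(\partial_r^*\Omega)u_r$ simplifies directly to $2im\,p_1$, since $s\,(\partial_s^*\Omega)(s) = (s\Omega)'(s)$. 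The contribution of $-2\partial_r^*(\Omega u_\theta)$ is treated by writing $s\,\partial_s^*(\Omega u_\theta) = \partial_s(s\Omega u_\theta)$ and integrating by parts in each of the two integrals; the boundary terms produced at the splitting point $r$ cancel exactly because $\phi_1(r)\phi_2(r) - \phi_2(r)\phi_1(r) = 0$, while the boundary terms at $0$ and $\infty$ vanish by the compact support assumption. What remains carries an extra derivative $|k|\phi_i'$ and assembles precisely into the expression $2|k|\,p_2$.

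The remaining two cases follow the same pattern with different fundamental systems: $\{r^{|m|}, r^{-|m|}\}$, of Wronskian $-2|m|/r$, when $k = 0$ and $m \neq 0$; and $\{1, \log r\}$ when $k = m = 0$. In the last case the equation reduces to $(rp')' = 2\partial_r(r\Omega u_\theta)$, integrating once yields $rp' = 2r\Omega u_\theta + C$, and the constant $C$ must vanish since otherwise $p(r) \sim C\log r$ near $0$, violating the growth constraint $p(r) = \mathcal{O}(|\log r|^{1/2})$; a further integration from infinity, using $p(\infty) = 0$, produces the stated formula.

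Uniqueness is then immediate: any two solutions satisfying both boundary conditions differ by a solution of the homogeneous equation that decays at infinity and is $\mathcal{O}(|\log r|^{1/2})$ at the origin. Inspection of the asymptotics \eqref{eq:asym0}--\eqref{eq:asyminf} (and of $r^{\pm|m|}$ or $\{1,\log r\}$ in the degenerate cases) shows that no nontrivial linear combination of the homogeneous solutions has both properties simultaneously. I anticipate the main bookkeeping obstacle to be the integration by parts in the $-2\partial_r^*(\Omega u_\theta)$ term, where the cancellation of the two boundary contributions at $r$ and the precise matching of signs and powers of $|k|$ with the formula for $p_2$ requires careful attention; everything else is a direct application of variation of parameters together with the standard asymptotics of modified Bessel functions.
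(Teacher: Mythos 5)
Your proposal is correct and follows essentially the same route as the paper: density of smooth compactly supported divergence-free fields (Proposition~\ref{prop:approximation}), variation of parameters with the Bessel pair $\{I_m(|k|r),K_m(|k|r)\}$ (resp.\ $\{r^{\pm|m|}\}$), integration by parts on the $-2\partial_r^*(\Omega u_\theta)$ term to produce $p_2$, and asymptotics for uniqueness. The only cosmetic differences are that the paper reduces to $k\ge 0$ before writing the Green's function and, in the degenerate case $k=m=0$, rules out the constant of integration via $\partial_r p\in L^2(\Rp,r\dd r)$ rather than the $\cO(|\log r|^{1/2})$ growth condition; both arguments are equally valid.
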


\begin{proof}
In view of \eqref{eq:pmk} we can suppose without loss of generality
that $k \ge 0$. If $k > 0$, we first assume that $u \in X_{m,k} \cap 
C^1_c(\Rp)$ and we consider the linear elliptic equation
\begin{equation}\label{eq:linell}
  -\partial_r^* \partial_r p(r) +\frac{m^2}{r^2}\,p(r) + k^2 p(r)
  \,=\, f(r)\,, \qquad r > 0\,,
\end{equation}
where $f = 2im(\partial_r^* \Omega)u_r - 2 \partial_r^* (\Omega\,u_\theta)$. 
The unique solution of \eqref{eq:linell} that is regular at the origin 
and decays to zero at infinity is
\begin{equation}\label{eq:frep}
  p(r) \,=\, K_m(kr) \int_0^r I_m(ks) f(s) s\dd s + I_m(kr) 
  \int_r^\infty K_m(ks) f(s) s\dd s\,, \qquad r > 0\,.
\end{equation}
Replacing $f$ by its expression and integrating by parts, we easily 
obtain the representation \eqref{eq:repmk}. The general case where 
$u$ is an arbitrary function in $X_{m,k}$ follows by a density argument, 
using Proposition~\ref{prop:approximation}. 

If $k = 0$ and $m \neq 0$, the solutions of the homogeneous equation
\eqref{eq:homogene} are $r^{|m|}$ and $r^{-|m|}$, instead of
$I_m(|k|r)$) and $K_m(|k|r)$. Proceeding exactly as above, we thus
arrive at \eqref{eq:repm0} instead of \eqref{eq:repmk}. Finally, if
$k = m = 0$, any solution of \eqref{eq:pmk} such that $\partial_r p 
\in L^2(\Rp,r\dd r)$ satisfies $\partial_r p = 2 \Omega u_\theta$, 
hence $p(r) = - 2 \int_r^\infty \Omega(s) u_\theta(s)\dd s$. In all 
cases, the solution of \eqref{eq:pmk} given by the above formulas
satisfies $p(r) = \cO(|\log r|^{1/2})$ as $r \to 0$ and $p(r) \to 0$ 
as $r \to +\infty$, and is unique in that class. 
\end{proof}

%%%%%%%%%%%%%%%%%%%%%%%%%%%%%%%%%%%%%%%%%%%%%


\begin{thebibliography}{99}
\setlength{\itemsep}{-0.4mm}

\vspace{-0.2cm}

\bibitem{AS}
M. Abramowitz and I. Stegun, {\em Handbook of Mathematical Functions}, 
Dover, 1964. 

\bibitem{AKO}
S. V. Alekseenko, P. A. Kuibin and V. L. Okulov, 
{\em Theory of Concentrated Vortices. An Introduction}, Springer, 2007. 

\bibitem{Arn} V. I. Arnold, Conditions for the nonlinear stability 
of the stationary plane curvilinear flows of an ideal fluid, 
Dokl. Mat. Nauk. {\bf 162} (1965), 773-777. 

\bibitem{DR}
P. Drazin and W. Reid, {\em Hydrodynamic stability}, Cambridge Univ. Press, 
1981. 

\bibitem{DS}
N. Dunford and J. Schwartz, {\em Linear operators. Part II. Spectral theory. 
Selfadjoint operators in Hilbert space},  Interscience Publishers, 1963.  

\bibitem{EE}
D. E. Edmunds and W. D. Evans, {\em Spectral Theory and Differential Operators}, 
second edition, Oxford university press, Oxford, 2018. 

\bibitem{EN}
K. J. Engel and R. Nagel, {\em One-Parameter Semigroups for Linear Evolution 
Equations}, Graduate Texts in Mathematics {\bf 194}, Springer, 1999.

\bibitem{FSJ}
D. Fabre, D. Sipp, and L. Jacquin, 
Kelvin waves and the singular modes of the Lamb-Oseen vortex, 
J. Fluid Mech. {\bf 551} (2006), 235--274. 

\bibitem{Ga} G. P. Galdi, {\em An Introduction to the Mathematical Theory 
of the Navier-Stokes Equations. Steady-State Problems}, Springer Monographs 
in Mathematics, Springer, 2011.  

\bibitem{GS1} Th. Gallay and D. Smets, 
Spectral stability of inviscid columnar vortices, 
arXiv:1805.05064 (2018). 

\bibitem{GS2} Th. Gallay and D. Smets, 
On the linear stability of vortex columns in the energy space
(first version), arXiv:1811.07584v1 (2018). 

\bibitem{HG}
L. N. Howard and A. S. Gupta, 
On the hydrodynamic and hydromagnetic stability of swirling flows,
J. Fluid Mechanics {\bf 14} (1962), 463--476. 

\bibitem{LL}
S. Le Diz\`es and L. Lacaze, 
An asymptotic description of vortex Kelvin modes, J. Fluid Mech. 
{\bf 542} (2005), 69--96.

\bibitem{MP}
C. Marchioro and M. Pulvirenti, 
Some considerations on the nonlinear stability of stationary planar 
Euler flows, Commun. Math. Phys. {\bf 100} (1985), 343--354.

\bibitem{Ra}
Lord Rayleigh, On the dynamics of revolving fluids, Proceedings
of the Royal Society A {\bf 93} (1917), 148--154. 

\bibitem{ReSi} M. Reed and B. Simon, {\em Methods of modern mathematical 
physics. IV. Analysis ofoperators}, Academic Press, New York, 1978.

\bibitem{RS}
A. Roy and G. Subramanian, 
Linearized oscillations of a vortex column: the singular eigenfunctions, 
J. Fluid Mech. {\bf 741} (2014), 404--460. 

\bibitem{Ke}
Sir W. Thomson (Lord Kelvin), Vibrations of a columnar vortex, 
Proceedings of the Royal Society Edinburgh {\bf 10} (1880), 443-456. 
The London, Edinburgh and Dublin Philosophical Magazine and
Journal of Science {\bf X} (1880), 153--168. 

\end{thebibliography}
\end{document}